\numberwithin{equation}{section}
\newtheorem{Theorem}{Theorem}[section]
\newtheorem{Proposition}[Theorem]{Proposition}
\newenvironment{Assumption}[1]
  {\innerassump}
  {\endinnerassump}
\newtheorem{Remark}[Theorem]{Remark}
\newtheorem{Lemma}[Theorem]{Lemma}
\newcommand{\PP}{\mathbb{P}}
\newcommand{\RR}{\mathbb{R}}
\newcommand{\NN}{\mathbb{N}}
\newcommand{\ZZ}{\mathbb{Z}}
\newcommand{\TT}{\mathbb{T}}
\newcommand{\bD}{\bm{D}}
\newcommand{\bU}{\bm{U}}
\newcommand{\bV}{\bm{V}}
\newcommand{\bk}{\bm{k}}
\newcommand{\bv}{\bm{v}}
\newcommand{\bu}{\bm{u}}
\newcommand{\bx}{\bm{x}}
\newcommand{\bz}{\bm{0}}
\newcommand{\be}{\bm{e}}
\newcommand{\bo}{\bm{1}}
\newcommand{\cC}{{\mathcal C}}
\newcommand{\cF}{{\mathcal F}}
\newcommand{\cK}{{\mathcal K}}
\newcommand{\cM}{{\mathcal M}}
\newcommand{\cO}{{\mathcal O}}
\newcommand{\cU}{{\mathcal U}}
\renewcommand{\P}{{\sf P}}
\renewcommand{\S}{{\sf S}}
\newcommand{\J}{{\sf J}}
\newcommand{\sH}{{\mathscr H}}
\renewcommand{\i}{{\rm i}}
\renewcommand{\r}{{\rm ref}}
\renewcommand{\l}{{\rm l}}
\newcommand{\h}{{\rm h}}
\DeclareMathOperator{\Id}{Id}
\DeclareMathOperator{\Span}{span}
\DeclareMathOperator{\Diag}{Diag}
\DeclareMathOperator{\dd}{{\rm d}\!}
\DeclarePairedDelimiter\norm{\big\lvert}{\big\rvert}
\begin{document}
		
	\title{The Fourier spectral approach to the spatial discretization of quasilinear hyperbolic systems}
	
	\author[1]{Vincent Duchêne}
	\author[2]{Johanna Ulvedal Marstrander}
	\affil[1]{IRMAR, Univ. Rennes , F-35000 Rennes, France.}
	\affil[2]{Department of Mathematical Sciences, NTNU, 7491 Trondheim, Norway}
	\date{\today}
	
	\maketitle 
	
	\begin{abstract}
		We discuss the rigorous justification of the spatial discretization by means of Fourier spectral methods of quasilinear first-order hyperbolic systems.
		We provide uniform stability estimates that grant spectral convergence of the (spatially) semi-discretized solutions towards the corresponding continuous solution provided that the underlying system satisfies some suitable structural assumptions. We consider a setting with sharp low-pass filters and a setting with smooth low-pass filters and argue that ---at least theoretically--- smooth low-pass filters are operable on a larger class of systems. While our theoretical results are supported with numerical evidence, we also pinpoint some behavior of the numerical method that currently has no theoretical explanation.
	\end{abstract}

	\noindent\textbf{Keywords.} fourier spectral methods, spectral convergence, hyperbolic systems
	\newline\textbf{MSC codes.} 65M12, 65M70, 76M22

	\tableofcontents

	\section{Introduction}\label{sec.intro}
	In this work we shall consider the spatial discretization by means of Fourier spectral methods of systems of the form
	\begin{equation}\label{eq.hyp}
		\partial_t \bU+\sum_{j=1}^d A_j(\bU)\partial_{x_j}\bU=\bz, \quad \bU\vert_{t=0} = \bU^0.
	\end{equation}
	where for all $j\in\{1,\dots,d\}$ and for all $\bU\in\RR^n$, $A_j(\bU)$ are matrices satisfying Assumption~\ref{assump.A1}.
	\begin{Assumption}{A.1}\label{assump.A1}
		For all \(j\in \{1,\dots,d\}\) and \(\bU \in \RR^{n}\), \(A_j(\bU)\) are $n$-by-$n$ real-valued matrices. 
		We assume that all entries of $A_j(\cdot)$ are polynomial.
	\end{Assumption}
	\begin{Remark}
		Assuming that the system~\eqref{eq.hyp} has only polynomial nonlinearities may seem an over-restrictive assumption. This assumption is motivated by two considerations. Firstly, the Fourier spectral method may be efficiently implemented only within this framework; see Remark~\ref{R.spectral} below. Secondly, as far as we know, the literature lacks a theory analogous to para-differential calculus (see {\em e.g.} ~\cite[Chapter 5]{Metivier08}) on Sobolev spaces of periodic functions, which would provide the composition estimates in Proposition~\ref{prop.composition_estimates} for general composition functions. All our results apply without assuming polynomial nonlinearities if Proposition~\ref{prop.composition_estimates} holds without that assumption.
	\end{Remark}
	We shall assume further additional structural assumptions on the system~\eqref{eq.hyp}, depending on the needs. In particular we shall always consider Friedrichs-symmetrizable systems, which guarantees the hyperbolicity of the system, and local-in-time well-posedness of the initial-value problem in $L^2$-based Sobolev spaces of sufficiently high regularity index, $\bU^0\in H^s((2\pi\TT)^d)^n$ with $s>d/2+1$ (see Subsection~\ref{sec.def.not}). For simplicity, we consider in this work $2\pi$-periodic functions in all spatial directions. The results extend straightforwardly to more general periodic frameworks, and analogous results in the $n$-dimensional Euclidean space could be obtained with some simple adaptations.

	Let \(\mathcal{T}_N\) be the space of trigonometric polynomials of degree \(N\):
	\[\mathcal{T}_N \coloneq \Span\{\exp(\i\bk \cdot \bx), \bk\in \ZZ^d, \left|k_j \right|\leq N, j=1,\ldots,d\}, \text{ and } \mathcal{T}_N^{n} \coloneq \underbrace{\mathcal{T}_N \times\ldots\times \mathcal{T}_N}_{n \text{ times}}.\]
	Let \(\P_N \colon L^2((2\pi\TT)^d)^n \to  \mathcal{T}_N^{n}\) be the \(L^2\)-projection operator onto \(\mathcal{T}_N^{n}\): \(\P_N = \Diag(P_N(D))\), where \(\P_N\) is a Fourier multiplier with symbol \(P_N (\cdot)= \bo_{\llbracket -N,N \rrbracket^d}(\cdot)\). We refer to \(\P_N\) as a {\bf sharp low-pass filter}. For any \(0\leq r\leq s\) and \(\bU\in H^s((2\pi\TT)^d)^n\) Sobolev space of order \(s\), \(\P_N\) satisfies (see {\em e.g.}~\cite[(5.1.10),(5.8.4)]{CanutoHussainiQuarteroniEtAl06})
	\begin{equation}\label{eq.PNest}
		\left|\bU - \P_N \bU\right|_{H^r} \leq C(d,s,r) \left\langle N \right\rangle^{r-s} \left|\bU \right|_{H^s}.
	\end{equation}
	This kind of estimates are referred to in the literature as {\em spectral convergence}, and we will follow this terminology.
	
	The standard Fourier spectral method {(see {\em e.g.}~\cite{KreissOliger72,GottliebOrszag77,MajdaMcDonoughOsher78} or~\cite[Section~3]{BardosTadmor15})} for the spatial discretization of the problem~\eqref{eq.hyp} amounts to  seeking solutions \(\bU_N \colon t\mapsto \mathcal{T}_N^{n}\) to the problem
	\begin{equation}\label{eq.hyp-sharp}
		\partial_t \bU_N+\P_N\left(\sum_{j=1}^d A_j(\bU_N)\partial_{x_j}\bU_N\right)=\bz, \quad \bU_N\vert_{t=0} = \P_N\bU^0.
	\end{equation}
	\begin{Remark}\label{R.spectral}
		Let us recall that one of the great assets of Fourier spectral methods is that spatial differentiation and multiplication can be very efficiently performed (up to machine precision) by means of Fast Fourier Transform (FFT/IFFT) and multiplication at spatial collocations points, provided suitable dealiasing operations are performed. In practice, if the entries of $A_j(\cdot)$ are all polynomial with maximal degree $p$, then one computes (following Orszag's rule~\cite{Orszag71}) $N\frac{p+2}{2}$ modes, and applying the projection $\P_N$ after multiplications at collocation points performs the necessary dealiasing. If entries of $A_j(\cdot)$ are not polynomials, then one typically uses {\em pseudo-}spectral schemes that follow the aforementioned strategy but cannot be formulated as in~\eqref{eq.hyp-sharp}.  
		
		We do not discuss in this work the full time-space discretization of~\eqref{eq.hyp}, that is well-suited numerical time integrators for~\eqref{eq.hyp-sharp}. The implementation we used for numerical experiments is described in more detail in Subsections~\ref{sec.SV-num-1} and~\ref{sec.SV-num-2}.  
	\end{Remark}

	{\bf In this work we shall discuss the long-time {({\em i.e.} uniform with respect to $N$)} existence of solutions to~\eqref{eq.hyp-sharp} and {spectral} convergence towards solutions to~\eqref{eq.hyp} as $N\to\infty$.} 
	
	Our results will depend on the structure of the system. As we shall see, the solution to~\eqref{eq.hyp-sharp} converges towards the corresponding solution to the problem~\eqref{eq.hyp} (assuming sufficient regularity) whenever the system is symmetric. If the system is only symmetrizable, the situation is more complicated. In order to deal with this situation we consider {\bf smooth low-pass filters}, \(\S_N \colon L^2((2\pi\TT)^d)^n \to \mathcal{T}_N^{n}\) where
	${\S_N=\Diag(S_N(D))}$ is a Fourier multiplier with symbol $S_N(\cdot)=S(\cdot/N)$ where $S$ is even and satisfies
	\[\begin{cases}
		S(\bk)=1&\text{if $\max_{j=1,\ldots, d}|k_j|\leq 1/2$,}\\
		S(\bk)=0&\text{if $\min_{j=1, \ldots, d}|k_j|\geq 1$,}\\
		S(\bk)\in{[}0,1{]}&\text{otherwise,}
	\end{cases}
	\]
	and $S^{1/2}$ is Lipschitz-continuous.
	 When \(d=1\), an example of such a function is $S_1(\cdot)\coloneq\max\big(0,\min\big(1,2-2|\cdot|\big)\big)^2$. When $d\geq 2$, one can set $S_d((k_1,\dots,k_d))\coloneq S_1(k_1)\times\cdots \times S_1(k_d)$.
	The advantage of such smooth low-pass filters is that ---contrarily to the sharp low-pass filter--- they satisfy commutator estimates with gains of regularity uniformly with respect to \(N\); see Proposition~\ref{prop.commutator_0}. These are crucial to parts of the analysis.
	Notice that because $| 1-S(\bk)|\leq 1=|1-P_{1/2}(\bk)|$ when $\max_{j=1,\ldots, d}|k_j|\geq 1/2$ and $1-S(\bk)=0=1-P_{1/2}(\bk)$ otherwise, we infer from~\eqref{eq.PNest} the corresponding spectral convergence estimate
		\begin{equation}\label{eq.SNest}
		\left|\bU - \S_N \bU\right|_{H^r} \leq \left|\bU - \P_{N/2} \bU\right|_{H^r} \leq C(d,s,r) \left\langle N \right\rangle^{r-s} \left|\bU \right|_{H^s}.
	\end{equation}
	The spatial discretization of~\eqref{eq.hyp} using smooth low-pass filters could amount to finding a solution \(\bU_N \colon t\mapsto \mathcal{T}_N^{n}\) to
	\begin{equation}\label{eq.hyp-smooth0}
		\partial_t \bU_N+\S_N\left(\sum_{j=1}^d A_j(\bU_N)\partial_{x_j}\bU_N\right)=\bz, \quad \bU_N\vert_{t=0} = \P_N\bU^0.
	\end{equation}
	We also consider variants of this system such as
	\begin{equation}\label{eq.hyp-smooth}
		\partial_t \bU_N+\sum_{j=1}^d (A_j^0 + \S_N (A_j^1(\bU_N)[\circ]))\partial_{x_j}\bU_N=\bz, \quad \bU_N\vert_{t=0} = \P_N \bU^0.
	\end{equation}
	where \(A_j^0=A_j(\bz)\) and \(A_j^1(\bU) = A_j(\bU)- A_j^0 \). Indeed, applying smooth low-pass filters to linear terms is unnecessary, especially when one uses exponential time integrators (see {\em e.g.} Program 27 in~\cite{Trefethen00}).
	Notice the distinction between~\eqref{eq.hyp-smooth0} and~\eqref{eq.hyp-smooth} is only necessary when using smooth low-pass filters since when using the sharp low-pass filter $\P_N$ one has \(\P_N \bU_N = \bU_N\) and $\P_N$ commutes with $A_j^0$. 
	\medskip
	
	\paragraph{Outline} Let us now describe the structure and main results of this work.
	Symmetric systems are discussed in Section~\ref{sec.sym}. We consider sharp and smooth low-pass filters in Subsection~\ref{sec.sym.sharp} and~\ref{sec.sym.smooth} respectively. We obtain convergence of the semi-discretized solutions in both cases, stated in Propositions~\ref{prop.sym-sharpdifference} and~\ref{prop.sym-smoothdifference}. Symmetrizable systems are discussed in Section~\ref{sec.symable}. Subsection~\ref{sec.symable.smooth} concerns smooth low-pass filters and we obtain analogous convergence results, stated in Proposition~\ref{prop.symable-difference}. The case with sharp low-pass filters is treated in Subsection~\ref{sec.symable.sharp} and in order to secure spectral convergence, more stringent structural assumptions on the system are required. This yields Proposition~\ref{prop.symable-sharpdifference}. Numerical experiments illustrating and investigating the sharpness of our theoretical results are provided in Section~\ref{sec.illustration}.

	\subsection{Motivation and related works}
	
	Our work was motivated by the study of Boussinesq and Whitham--Boussinesq systems that are nonlinear dispersive models for the propagation of surface gravity waves~\cite{Lannes,MM4WW}. The Fourier spectral method is especially indicated for the spatial discretization of these systems since the nonlinear contributions are quadratic (recall Remark~\ref{R.spectral}) and the dispersive contributions take the form of Fourier multipliers. The spectral convergence of discretized versions of Boussinesq models towards the corresponding continuous solutions was proved in~\cite{XavierRinconAlfaroVigoEtAl18} and~\cite{DougalisDuranSaridaki23}. These results have a shortcoming in that they lack uniformity in the non-dispersive limit. In order to clarify this point, let us consider the Benjamin--Bona--Mahony (BBM) and (a variant of) the Whitham equations, which read respectively
	\[(\Id-\mu\partial_x^2)\partial_t u +\partial_x u+u\partial_x u=0, \quad \text{ and } \quad  \partial_t u+\frac{\tanh(\sqrt\mu |D|)}{\sqrt\mu|D|}\big(\partial_x u+u\partial_x u\big)=0.\]
	The aforementioned Boussinesq (resp. Whitham--Boussinesq) systems can be loosely considered as systems extending the BBM (resp. Whitham) scalar equation, in the same way the Saint-Venant system discussed in Section~\ref{sec.illustration} extends the inviscid Burgers equation. All these equations provide valid approximations of water waves provided (among other assumptions) the shallowness parameter $\mu>0$ satisfies $\mu\ll1$; see~\cite{Lannes,MM4WW}.
	
	Results proved in~\cite{XavierRinconAlfaroVigoEtAl18} and~\cite{DougalisDuranSaridaki23} and adapted to the simplest case of scalar equations take the form
			\[\norm{u_N(t,\cdot)}_{H^s}\leq C_\mu(t) \norm{u_N\big\vert_{t=0}}_{H^s} \quad \text{ and } \quad  \norm{(u-u_N)(t,\cdot)}_{H^r}\leq C_\mu(t) N^{r-s}  \qquad \text{ for any $s>1/2$ and $ 0\leq r\leq s$,}\]
	where $u$ is the solution to the scalar equation, and $u_N$ the corresponding solution to the semi-discretized equation
		\[(\Id-\mu\partial_x^2)\partial_t u_N +\partial_x u_N+\P_N(u_N\partial_x u_N)=0, \quad \text{ and } \quad  \partial_t u_N+\frac{\tanh(\sqrt\mu |D|)}{\sqrt\mu|D|}\big(\partial_x u_N+\P_N(u_N\partial_x u_N)\big)=0.\]
	The aforementioned shortcoming is that $C_\mu(t)$ depends nonuniformly on $\mu$ in the non-dispersive limit $\mu\ll1$, typically $C_\mu(t)\lesssim 
	\exp(\mu^{-1} t)$. This is inconsistent with the standard well-posedness theory for initial data in Sobolev spaces $H^{s}(2\pi\TT)$, $s>3/2$, which holds uniformly with respect to $\mu\in (0,1]$ (see {\em e.g.}~\cite[Proposition~6]{KleinLinaresPilodEtAl18} and~\cite{SautXu12,Emerald22,Paulsen22} for the corresponding results on the Boussinesq and Whitham--Boussinesq systems). The reason for this discrepancy is that for the spectral convergence results, stability estimates on the BBM and Whitham equations are obtained by viewing them as {\em semilinear systems} which can be formulated using the Duhamel formula
	\[u(t,\cdot)=\int_0^t e^{-(t-\tau)L_\mu(D)}L_\mu(D)\big(\frac{u(\tau,\cdot)^2}2\big)\dd\tau\]
	where $L_\mu(D)=\frac{\partial_x}{\Id-\mu\partial_x^2}$ for the BBM equation, and $L_\mu(D)=\frac{\tanh(\sqrt\mu |D|)}{\sqrt\mu|D|}\partial_x$ for the Whitham equation. Notice that in both cases $L_\mu(D)\in\mathcal B(L^2(2\pi\TT);L^2(2\pi\TT))$, but that $\sup\{\norm{L_\mu v}_{L^2} \ : \ \norm{v}_{L^2}=1\}$ is not uniformly bounded with respect to $\mu\in(0,1]$, which is the source of the issue when $\mu\ll 1$. On the other hand, one can view the BBM and Whitham equation as perturbations of the inviscid Burgers equation, with skew-symmetric dispersive terms that are inconspicuous for the energy method. This leads to stability estimates in $H^{s}(2\pi\TT)$, $s>3/2$, that are uniform with respect to $\mu\ll1$. Of course obtaining such results on dispersive {\em systems} requires a good understanding of the underlying (non-dispersive) quasilinear systems, which is the focus of the current work.
	
	The rigorous analysis of semi-discretized or fully (space and time) discretized Fourier spectral schemes for semilinear equations is a very rich and active topic, which is impossible to summarize within a few lines; let us simply mention~\cite{CabreraCalvoSchratz22,CabreraCalvoRoussetSchratz22} which are particularly relevant as they specifically consider the BBM equation and pay attention to the non-dispersive limit (although together with vanishing nonlinearity, that is the long wave limit).
	In contrast, to the best of our knowledge, there are only a handful of works dedicated to the rigorous analysis of the Fourier method for {\em quasilinear systems}, culminating with the work of Bardos and Tadmor~\cite{BardosTadmor15} (following~\cite{Tadmor87,Tadmor89,GoodmanHouTadmor94}; see also the review paper~\cite{GottliebHesthaven01}). Here the authors consider the inviscid Burgers equation, as well as the one-dimensional isentropic Euler equation and the incompressible Euler equations. The first two systems belong to the class of equations we study. Specifically, the inviscid Burgers equation has the symmetric structure we employ in Subsection~\ref{sec.sym.sharp}, while the isentropic Euler equation in Lagrangian coordinates enjoys the Hamiltonian structure discussed in Subsection~\ref{sec.symable.sharp}. The incompressible Euler equations 
	\[ \partial_t \bu + \PP((\bu\cdot\nabla)\bu)=\bz, \ \PP(\bu)=\bu, \qquad \PP(\bu)\coloneq  \bu-\nabla\Delta^{-1}\nabla\cdot\bu\]
	does not belong to the class of equations studied in this work due to the presence of the Leray projection operator $\PP$ but it would not be difficult to extend our analysis (specifically concerning the symmetric situation) to this system. Let us mention that while our analysis is very similar, the estimates obtained in~\cite{BardosTadmor15} are not as sharp as the ones obtained in our work due to different choices when performing stability estimates, as the former take the form (see Theorem 3.1 therein)
	\[\norm{(\bU-\bU_N)(t,\cdot)}_{L^2}\leq C(t) N^{-s} \norm{\bU\vert_{t=0}}_{H^s}+N^{\frac34-\frac{s}2}\max_{\tau\leq t} \norm{\bU(\tau,\cdot)}_{H^s},\]
	while our results do not feature the second contribution. Quite interestingly, the authors in~\cite{BardosTadmor15} also prove the {\em emergence of spurious oscillations} of the semi-discretized solution after the critical time of shock formation for the continuous solution of the inviscid Burgers equation.  It would be interesting to study this problem when smooth low-pass filters are used and compare with the spectral viscosity method described therein.

	\subsection{Definitions and notations}
	\label{sec.def.not}

	In this section, we introduce a few notations used throughout the work. 

	Let \(L^2((2\pi\TT)^d)\) be the Lebesgue space of real-valued, square-integrable functions on the {$2\pi$-periodic} torus and \[ L^2((2\pi\TT)^d)^{n} = \underbrace{L^2((2\pi\TT)^d)\times\ldots\times L^2((2\pi\TT)^d)}_{n \text{ times}}.\]
	We endow \(L^2((2\pi\TT)^d)\) with the standard Lebesgue norm denoted $\big|\,\cdot\, \big|_{L^2}$, and the corresponding inner-product is denoted $\big(\cdot,\cdot\big)_{L^2}$. Similarly, we denote by \(L^\infty((2\pi\TT)^d)\) the Lebesgue space of bounded functions and \(W^{1,\infty}((2\pi\TT)^d)\) the space of Lipschitz continuous functions, endowed with their natural norms.
	
	We use the notation \(\left\langle \cdot \right\rangle = (1 + \left|\cdot \right|^2)^{1/2}\) and \(\Lambda^s = (1- \Delta)^{s/2}\), {\em i.e.} the Fourier multiplier with symbol \(\left\langle \cdot \right\rangle^{s}\) (see {\em e.g.}~\cite{Metivier08} for Fourier multipliers). For real \(s\geq 0\), we denote the \(L^2\)-based periodic Sobolev spaces by \(H^s((2\pi\TT)^d)^n\): 
	\begin{align*}
		H^s((2\pi\TT)^d)^n &= \{\bU \in L^2((2\pi\TT)^d)^n, \left|\bU \right|_{H^s} <\infty\}, \text{ where }\\
		\left|\bU \right|_{H^s}^2 &= \left|\Lambda^s  \bU \right|_{L^2}^2 = \sum_{\bk\in\ZZ^d} \left\langle \bk \right\rangle^{2s} \left|\widehat{\bU}_{\bk}\right|^2
	\end{align*}
	where \(\widehat{\bU}_{\bk}\) is the \(\bk\)-th fourier coefficient of \(\bU\).
	{For $X$ a Banach space and $I\subset\RR$ an interval, the space of $X$-valued continuous functions on $I$ is denoted $\cC(I;X)$. Given $n\in\NN$, the space of continuously $n$-th differentiable functions is denoted $\cC^n(I;X)$.}
	
	{As mentioned previously, we denote \(\mathcal{T}_N\) the space of trigonometric polynomials of degree \(N\):
	\[\mathcal{T}_N \coloneq \Span\{\exp(\i\bk \cdot \bx), \bk=(k_1,\dots,k_d)\in \ZZ^d, \left|k_j \right|\leq N, j=1,\ldots,d\}, \text{ and } \mathcal{T}_N^{n} \coloneq \underbrace{\mathcal{T}_N \times\ldots\times \mathcal{T}_N}_{n \text{ times}},\]
	and \(\P_N\coloneq \Diag(P_N(D))\) the Fourier multiplier with symbol \(P_N \coloneq \bo_{\llbracket -N,N \rrbracket^d}\) is the \(L^2\)-projection operator onto \(\mathcal{T}_N^{n}\). Here, $\llbracket -N,N \rrbracket\coloneq \{-N,-N+1,\dots,N-1,N\}$.

	We set ${\S_N=\Diag(S_N(D))}$ a Fourier multiplier with symbol $S_N(\cdot)=S(\cdot/N)$ where $S$ is even and satisfies
	\[\begin{cases}
		S(\bk)=1&\text{if $\max_{j=1,\ldots, d}|k_j|\leq 1/2$,}\\
		S(\bk)=0&\text{if $\min_{j=1, \ldots, d}|k_j|\geq 1$,}\\
		S(\bk)\in{[}0,1{]}&\text{otherwise,}
	\end{cases}
	\]
	and $S^{1/2}$ is Lipschitz-continuous. Apart from these properties, the specific profile of the symbol $S$ is inconsequential.

	We denote by \(C(\lambda_1, \lambda_2, \ldots)\) a positive ``constant'' depending on its parameters. Whenever such a parameter represents the norm of a function, \(C\) depends non-decreasingly on said norm. {Whenever the parameter is a subset of the Euclidean space, \(C\) depends non-decreasingly on this parameter when set inclusion is used as (partial) ordering.} Dependency on regularity indices \(s \in \RR\) or the dimension \(d\) are omitted when it is unessential or clear from the context. Note that such constants \(C\) will always be independent of the degree of the approximation \(N\).

	\section{Symmetric quasilinear systems}
	\label{sec.sym}

	In this section, we consider systems~\eqref{eq.hyp}, where for all \(j\in \{1,\ldots, d\}\), \(A_j(\bU)\) satisfies the Assumption~\ref{assump.A1} and is additionally self-adjoint.
	We have the following standard result (see {\em e.g.}~\cite{Benzoni-GavageSerre07}).
	\begin{Proposition}[Well-posedness]\label{prop.sym-hyp}
		Let $s>1+d/2$, and $M>0$. Suppose that for all \(j\in \{1,\ldots, d\}\), \(A_j(\cdot)\) satisfies the Assumption~\ref{assump.A1} and is self-adjoint. There exists $C>0$ and $T>0$ (depending only on $s$ and $M$) such that for every $\bU^0\in H^s((2\pi\TT)^d)^n$ such that $\norm{\bU^0}_{H^s}\leq M$, there exists a unique $\bU\in\cC(I;H^s((2\pi\TT)^d)^n)$ maximal-in-time classical solution to~\eqref{eq.hyp}, and moreover the open time interval $I\supset[0,T/\norm{\bU^0}_{H^s}]$ and for all $0\leq t \leq T/\norm{\bU^0}_{H^s}$,
		\[\norm{\bU}_{H^s}\leq \norm{\bU^0}_{H^s} \exp(C \norm{\bU^0}_{H^s} t)\leq 2\norm{\bU^0}_{H^{s}}.\]
	\end{Proposition}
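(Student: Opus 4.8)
The plan is to establish this via the classical energy method for Friedrichs-symmetric systems, for which the symmetry $A_j(\bU)=A_j(\bU)^*$ makes the identity the natural symmetrizer. The argument has three ingredients: a single $H^s$ a priori estimate that drives everything; a construction of solutions by regularization (parabolic regularization $\varepsilon\Delta$, a Friedrichs mollifier, or a mollified Picard iteration) producing approximate solutions that obey the \emph{same} estimate uniformly in the regularization parameter; and a difference estimate in $L^2$ for uniqueness. Since the $A_j$ are polynomial, all composition and commutator bounds needed below are furnished by Proposition~\ref{prop.composition_estimates} together with the standard Kato--Ponce commutator estimate, so the construction and limit passage (compactness in a lower norm via Aubin--Lions, then recovery of strong $\cC(I;H^s)$ continuity by a Bona--Smith type argument) are routine once the a priori estimate is in hand. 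I therefore focus on the estimate.

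Applying $\Lambda^s=(1-\Delta)^{s/2}$ to~\eqref{eq.hyp}, pairing with $\Lambda^s\bU$ in $L^2$, and using $\partial_t$ gives
\[
\tfrac12\tfrac{\dd}{\dd t}\norm{\bU}_{H^s}^2 = -\sum_{j=1}^d\big(\Lambda^s(A_j(\bU)\partial_{x_j}\bU),\,\Lambda^s\bU\big)_{L^2}.
\]
I would split each term as $A_j(\bU)\Lambda^s\partial_{x_j}\bU+[\Lambda^s,A_j(\bU)]\partial_{x_j}\bU$. In the principal term, the symmetry of $A_j(\bU)$ and the skew-adjointness of $\partial_{x_j}$ let me integrate by parts on the torus, trading the top derivative for $\tfrac12\big((\partial_{x_j}A_j(\bU))\Lambda^s\bU,\Lambda^s\bU\big)_{L^2}$, which is bounded by $C(\norm{\bU}_{L^\infty})\norm{\bU}_{W^{1,\infty}}\norm{\bU}_{H^s}^2$ since $\partial_{x_j}A_j(\bU)=A_j'(\bU)\partial_{x_j}\bU$ with $A_j$ polynomial. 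The commutator term is controlled by Kato--Ponce, $\norm{[\Lambda^s,A_j(\bU)]\partial_{x_j}\bU}_{L^2}\lesssim \norm{\nabla A_j(\bU)}_{L^\infty}\norm{\bU}_{H^s}+\norm{A_j(\bU)}_{H^s}\norm{\nabla\bU}_{L^\infty}$, and the composition estimates of Proposition~\ref{prop.composition_estimates} turn both factors into $C(\norm{\bU}_{L^\infty})\norm{\bU}_{W^{1,\infty}}\norm{\bU}_{H^s}$. Collecting everything yields $\tfrac{\dd}{\dd t}\norm{\bU}_{H^s}^2\le C(\norm{\bU}_{L^\infty})\norm{\bU}_{W^{1,\infty}}\norm{\bU}_{H^s}^2$.

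To reach the stated bounds I would invoke the Sobolev embedding: because $s>1+d/2$ one has $\norm{\bU}_{W^{1,\infty}}\le C\norm{\bU}_{H^s}$, so the estimate closes within the single norm $\norm{\bU}_{H^s}$. The exponential form then comes from a bootstrap. On any interval where the continuity argument permits $\norm{\bU}_{H^s}\le 2\norm{\bU^0}_{H^s}$, both $\norm{\bU}_{L^\infty}$ and $\norm{\bU}_{W^{1,\infty}}$ are bounded by $C\norm{\bU^0}_{H^s}$, whence $\tfrac{\dd}{\dd t}\norm{\bU}_{H^s}\le C\norm{\bU^0}_{H^s}\norm{\bU}_{H^s}$ and Gronwall gives $\norm{\bU}_{H^s}\le\norm{\bU^0}_{H^s}\exp(C\norm{\bU^0}_{H^s}\,t)$. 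Choosing $T=\tfrac{\ln 2}{C}$ forces the right-hand side to stay below $2\norm{\bU^0}_{H^s}$ for $0\le t\le T/\norm{\bU^0}_{H^s}$, so the bootstrap assumption is strictly improved and the estimate is self-consistent on that interval. The same a priori bound, applied to the regularized approximations, yields existence on $[0,T/\norm{\bU^0}_{H^s}]$ and the continuation criterion identifies the open maximal interval $I\supset[0,T/\norm{\bU^0}_{H^s}]$.

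Uniqueness I would obtain by an $L^2$ estimate on the difference $\bW=\bU-\bV$ of two solutions: $\bW$ solves a linear symmetric hyperbolic system with coefficients $A_j(\bU)$ and a source linear in $\bW$ with $H^{s-1}$ coefficients, so the symmetric energy identity gives $\tfrac{\dd}{\dd t}\norm{\bW}_{L^2}^2\le C(\norm{\bU}_{H^s},\norm{\bV}_{H^s})\norm{\bW}_{L^2}^2$, and Gronwall with $\bW|_{t=0}=\bz$ forces $\bW\equiv\bz$. The genuine technical core is the energy estimate of the second paragraph: the delicate points are the clean loss of the top-order derivative through the symmetry-based integration by parts, and the tame commutator and composition bounds that keep every remainder at the level $\norm{\bU}_{W^{1,\infty}}\norm{\bU}_{H^s}^2$ rather than losing a derivative. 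Everything else---the construction by regularization, the compactness and strong-continuity recovery in time, and uniqueness---is standard once these bounds are secured.
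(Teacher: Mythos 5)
Your proof is correct and is the standard energy-method argument for this result, which the paper itself does not prove but cites as standard from~\cite{Benzoni-GavageSerre07}. Your argument also coincides in structure with the paper's own proof of the semi-discretized analogue (Lemma~\ref{lem.sym_tder_est} together with Proposition~\ref{prop.sym_sharp_num_bound}): the same splitting of $\Lambda^s\big(A_j(\bU)\partial_{x_j}\bU\big)$ into a commutator plus a principal term handled by symmetry and integration by parts, the same Sobolev-embedding closure of the estimate, and the same Grönwall-plus-continuity bootstrap (the paper chooses $T=\ln(3/2)/C$ rather than your $T=\ln 2/C$ so that the bootstrap improves strictly, a purely cosmetic difference).
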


	\subsection{Discretization with sharp low-pass filters}
	\label{sec.sym.sharp}
	
	Recall that the spatial discretization of the problem~\eqref{eq.hyp} amounts to finding a solution \(\bU_N \colon t\mapsto \mathcal{T}_N^{n}\) to the problem
	\begin{equation}\label{eq.sym-sharp}
		\partial_t \bU_N+\P_N\left(\sum_{j=1}^d A_j(\bU_N)\partial_{x_j}\bU_N\right)=\bz, \quad \bU_N\vert_{t=0} = \P_N\bU^0, 
	\end{equation}
	where \(\P_N = \Diag(P_N(D))\), with \(P_N(\cdot)= \bo_{\llbracket -N,N \rrbracket^d}(\cdot)\).

	We want to show that the semi-discretized solutions \(\bU_N\) to~\eqref{eq.sym-sharp}  converge as \(N \to \infty\) towards \(\bU\) the corresponding solution to the underlying system~\eqref{eq.hyp}. To do so, we will first show that the semi-discretized solutions exist and are bounded on a time interval independent of \(N\). This is Proposition~\ref{prop.sym_sharp_num_bound}. Then we use this bound to compare the semi-discretized and continuous solution on the interval of existence in Proposition~\ref{prop.sym-sharpdifference}. Finally, we refine this result by showing that if \(N\) is large enough the existence of the semi-discretized solution and the estimate on the difference hold on any compact subset of the interval of existence of the solution to~\eqref{eq.hyp}. 

	\begin{Proposition}[Uniform estimates]\label{prop.sym_sharp_num_bound}
		Let $s>d/2+1$ and $M>0$. Suppose that for all \(j\in \{1,\ldots, d\}\), \(A_j(\cdot)\) satisfies the Assumption~\ref{assump.A1} and is self-adjoint. There exists $C>0$ and $T>0$ depending only on \(s\) and \(M\) such that for every $N\in\NN$ and for every $\bU^0\in H^{s}((2\pi\TT)^d)^n$ such that $\norm{\bU^0}_{H^{s}}\leq M$, there exists a unique ${\bU_N\in\cC(I_N;H^s((2\pi\TT)^d)^n)}$ maximal-in-time classical solution to~\eqref{eq.sym-sharp} and $\bU_N\big\vert_{t=0}=\P_N\bU^0$. The open time interval $I_N\supset[0,T/\norm{\bU^0}_{H^{s}}]$ and for all $0\leq t\leq T/\norm{\bU^0}_{H^{s}}$,
		\begin{equation*}
			\norm{\bU_N}_{H^{s}}\leq \norm{\bU^0}_{H^{s}} \exp(C \norm{\bU^0}_{H^{s}} t) \leq 2\norm{\bU^0}_{H^{s}}.
		\end{equation*}

		Moreover, for any \(s'\geq s\), one has $\bU_N\in\cC^1(I_N;H^{s'}((2\pi\TT)^d)^n)$ and for any \(0<T^*\in I_N\) and \(M^*>0\) such that  \(\sup_{t\in [0,T^*]}\left|\bU_N(t,\cdot) \right|_{H^{s}} \leq M^*\) there exists $C^*>0$ depending only on \(s\), \(s'\) and \(M^*\) such that for all \(0\leq t\leq T^*\),
		\[\left| \bU_N \right|_{H^{s'}}\leq \left| \bU^0 \right|_{H^{s'}} \exp(C^* M^* t).\]
	\end{Proposition}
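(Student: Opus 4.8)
The plan is to prove this via a standard energy method combined with a continuation/bootstrap argument, treating the two claims (the uniform-in-$N$ bound on $[0,T/|\bU^0|_{H^s}]$ and the higher-regularity persistence) separately but with the same underlying mechanism.

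For the first (uniform) estimate, I would begin by noting that~\eqref{eq.sym-sharp} is an ODE on the finite-dimensional space $\mathcal{T}_N^n$, so local existence and uniqueness of a maximal classical solution $\bU_N\in\cC^1(I_N;\mathcal{T}_N^n)$ follow immediately from the Picard--Lindelöf theorem, since the right-hand side is polynomial (hence smooth) in $\bU_N$ by Assumption~\ref{assump.A1}. The content is the \emph{uniform} bound, which I would obtain by a differential energy inequality. First I would apply $\Lambda^s$ to the equation, pair with $\Lambda^s\bU_N$ in $L^2$, and use that $\P_N$ is self-adjoint and acts as the identity on $\mathcal{T}_N^n$ (so $\P_N\Lambda^s\bU_N=\Lambda^s\bU_N$ and $\P_N$ can be moved onto $\Lambda^s\bU_N$ and dropped). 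This reduces the estimate to controlling
\[
\frac{1}{2}\frac{\dd}{\dd t}\left|\bU_N\right|_{H^s}^2 = -\sum_{j=1}^d\Big(\Lambda^s\big(A_j(\bU_N)\partial_{x_j}\bU_N\big),\Lambda^s\bU_N\Big)_{L^2}.
\]
The standard trick is to commute: write $\Lambda^s(A_j\partial_{x_j}\bU_N)=A_j\Lambda^s\partial_{x_j}\bU_N+[\Lambda^s,A_j]\partial_{x_j}\bU_N$. The principal term $(A_j\Lambda^s\partial_{x_j}\bU_N,\Lambda^s\bU_N)_{L^2}$ is handled by integration by parts using the \emph{self-adjointness} of $A_j(\bU_N)$, which converts it into a term involving $\partial_{x_j}A_j(\bU_N)$ controlled by $|\bU_N|_{W^{1,\infty}}\lesssim|\bU_N|_{H^s}$ (Sobolev embedding, since $s>d/2+1$). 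The commutator term is estimated by the Kato--Ponce/Moser-type composition and commutator estimates, which in this periodic polynomial setting are exactly what Proposition~\ref{prop.composition_estimates} should supply, yielding a bound $\lesssim C(|\bU_N|_{H^s})|\bU_N|_{H^s}^2$. Altogether I would arrive at $\frac{\dd}{\dd t}|\bU_N|_{H^s}\leq C(|\bU_N|_{H^s})|\bU_N|_{H^s}^2$ with $C$ nondecreasing, and a Grönwall/continuation argument gives the stated bound and the fact that $I_N\supset[0,T/|\bU^0|_{H^s}]$, with $T,C$ depending only on $s,M$ since $|\P_N\bU^0|_{H^s}\leq|\bU^0|_{H^s}\leq M$.

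For the second claim, the higher-regularity estimate in $H^{s'}$, I would run the identical energy computation but with $\Lambda^{s'}$ in place of $\Lambda^s$. The key structural point that makes this \emph{linear} in the top-order quantity is that when estimating $\frac{\dd}{\dd t}|\bU_N|_{H^{s'}}$, the commutator $[\Lambda^{s'},A_j(\bU_N)]\partial_{x_j}\bU_N$ can be bounded by $C(|\bU_N|_{H^s})\,|\bU_N|_{H^{s'}}$: one factor carries the top $s'$ derivatives while the remaining factors are measured in the \emph{lower} norm $H^s$, which by the already-established bound is controlled by $M^*$ on $[0,T^*]$. This is the crucial observation—the higher norm appears only to the first power because $s>d/2+1$ already controls all the $L^\infty$-type quantities. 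Hence I would obtain $\frac{\dd}{\dd t}|\bU_N|_{H^{s'}}\leq C^*M^*|\bU_N|_{H^{s'}}$ with $C^*$ depending only on $s,s',M^*$, and Grönwall yields the claimed exponential bound. That $\bU_N\in\cC^1(I_N;H^{s'})$ for every $s'\geq s$ follows because $\bU_N(t)\in\mathcal{T}_N^n$ is a trigonometric polynomial for each $t$, so it is smooth in space and the $H^{s'}$ norms are all finite and equivalent up to $N$-dependent constants; the a priori estimate shows the $H^{s'}$ norm does not blow up before the $H^s$ norm does, so the maximal interval of existence is the same $I_N$ in every $H^{s'}$.

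\textbf{The main obstacle} will be establishing the commutator and composition estimates cleanly enough to extract the precise constant dependence claimed—in particular ensuring that the higher-regularity estimate is genuinely linear in $|\bU_N|_{H^{s'}}$ with a constant $C^*$ depending only on the \emph{lower} norm bound $M^*$, and that all constants are independent of $N$. The $N$-independence is delicate precisely because the sharp filter $\P_N$ does \emph{not} satisfy commutator estimates with gain of regularity; I would need to verify that $\P_N$ enters only through the identity $\P_N\bU_N=\bU_N$ and the self-adjointness $\P_N^*=\P_N$, never through a commutator $[\P_N,A_j]$, so that no $N$-dependence creeps in. This is exactly why the symmetric (self-adjoint) structure is essential here: it lets the principal term be integrated by parts without ever needing to commute $\P_N$ past the variable-coefficient matrices.
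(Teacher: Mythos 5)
Your proposal is correct and follows essentially the same route as the paper: an energy estimate on $\Lambda^{s'}\bU_N$ that exploits self-adjointness of the $A_j$, the projection identities $\P_N\bU_N=\bU_N$ and $\P_N^*=\P_N$ (so that $\P_N$ never appears in a commutator), the commutator/composition estimates of Propositions~\ref{prop.commutator_s} and~\ref{prop.composition_estimates}, and then Grönwall combined with a continuity (bootstrap) argument — including the key structural point that the estimate is \emph{linear} in the $H^{s'}$ norm with constants depending only on the $H^{s}$ norm, which is exactly the content of Lemma~\ref{lem.sym_tder_est}. The only immaterial deviation is that you get local existence from Picard--Lindelöf on the finite-dimensional space $\mathcal{T}_N^{n}$ rather than Cauchy--Lipschitz in the Banach space $H^s((2\pi\TT)^d)^n$ as the paper does; to then claim uniqueness among all $H^s$-valued solutions one should add the (one-line) observation that applying $\Id-\P_N$ to~\eqref{eq.sym-sharp} forces any solution with data $\P_N\bU^0$ to remain in $\mathcal{T}_N^{n}$.
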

	The key ingredient to show Proposition~\ref{prop.sym_sharp_num_bound} is the following apriori estimate. 
	\begin{Lemma}\label{lem.sym_tder_est}
		Let $s'\geq s>d/2+1$. Suppose that for all \(j\in \{1,\ldots, d\}\), \(A_j(\cdot)\) satisfies the Assumption~\ref{assump.A1} and is self-adjoint. Let \(\bU_N \in \cC(I_N; H^s((2\pi\TT)^d)^n)\) be solution to~\eqref{eq.sym-sharp} on $I_N\subset\RR$ an open time interval. Then $\bU_N \in \cC^1(I_N; H^{s'}((2\pi\TT)^d)^n)$ and for any $t\in I_N$, one has
		\begin{equation}\label{eq.sym_tder_est}
			\frac{\dd}{\dd t}\left|\bU_N \right|_{H^{s'}} \leq C\left( \left|\bU_N \right|_{H^{s}} \right)\left|\bU_N \right|_{H^{s}}\left|  \bU_N\right|_{H^{s'}}.
		\end{equation}
	\end{Lemma}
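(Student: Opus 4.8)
The plan is to obtain the differential inequality~\eqref{eq.sym_tder_est} via a standard energy estimate performed on the equation satisfied by $\Lambda^{s'}\bU_N$, exploiting the self-adjointness of the matrices $A_j$. First I would justify the temporal regularity: since $\bU_N$ takes values in the finite-dimensional space $\mathcal{T}_N^n$, where all Sobolev norms are equivalent and all Fourier multipliers map the space into itself smoothly, the right-hand side of~\eqref{eq.sym-sharp} is (via the polynomial Assumption~\ref{assump.A1} and Proposition~\ref{prop.composition_estimates}) a locally Lipschitz function of $\bU_N$ in any $H^{s'}$ norm, so $\partial_t\bU_N\in\cC(I_N;H^{s'})$ follows from the equation itself, giving $\bU_N\in\cC^1(I_N;H^{s'})$ for every $s'\ge s$.

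Next I would compute $\tfrac{\dd}{\dd t}\tfrac12\left|\bU_N\right|_{H^{s'}}^2 = \big(\Lambda^{s'}\bU_N,\Lambda^{s'}\partial_t\bU_N\big)_{L^2}$ and substitute the equation to obtain
\[
\tfrac12\tfrac{\dd}{\dd t}\left|\bU_N\right|_{H^{s'}}^2
=-\sum_{j=1}^d\big(\Lambda^{s'}\bU_N,\Lambda^{s'}\P_N\big(A_j(\bU_N)\partial_{x_j}\bU_N\big)\big)_{L^2}.
\]
The crucial simplification is that $\P_N$ is an orthogonal projection commuting with $\Lambda^{s'}$, and $\Lambda^{s'}\bU_N=\P_N\Lambda^{s'}\bU_N$ since $\bU_N\in\mathcal{T}_N^n$; hence the projection can be discarded and I am left with $-\sum_j\big(\Lambda^{s'}\bU_N,\Lambda^{s'}(A_j(\bU_N)\partial_{x_j}\bU_N)\big)_{L^2}$. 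The standard commutator trick is to write $\Lambda^{s'}(A_j(\bU_N)\partial_{x_j}\bU_N)=A_j(\bU_N)\Lambda^{s'}\partial_{x_j}\bU_N+[\Lambda^{s'},A_j(\bU_N)]\partial_{x_j}\bU_N$. The first, ``principal'' term pairs as $\big(\Lambda^{s'}\bU_N,A_j(\bU_N)\partial_{x_j}\Lambda^{s'}\bU_N\big)_{L^2}$; using self-adjointness of $A_j(\bU_N)$ and integrating by parts in $x_j$, this equals $-\tfrac12\big(\Lambda^{s'}\bU_N,(\partial_{x_j}A_j(\bU_N))\Lambda^{s'}\bU_N\big)_{L^2}$, which is controlled by $C\big(\left|\bU_N\right|_{W^{1,\infty}}\big)\left|\bU_N\right|_{H^{s'}}^2\le C(\left|\bU_N\right|_{H^s})\left|\bU_N\right|_{H^s}\left|\bU_N\right|_{H^{s'}}$ after invoking the Sobolev embedding $H^s\hookrightarrow W^{1,\infty}$ (valid since $s>d/2+1$) and the composition estimate of Proposition~\ref{prop.composition_estimates} for the polynomial entries of $A_j$.

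The main obstacle, and the only genuinely technical point, is bounding the commutator term $\big(\Lambda^{s'}\bU_N,[\Lambda^{s'},A_j(\bU_N)]\partial_{x_j}\bU_N\big)_{L^2}$ by the right-hand side of~\eqref{eq.sym_tder_est}, that is, with a factor $\left|\bU_N\right|_{H^s}$ rather than a symmetric split of derivatives. For this I would rely on the Kato--Ponce type commutator estimate $\big|[\Lambda^{s'},f]g\big|_{L^2}\le C\big(\left|\nabla f\right|_{L^\infty}\left|g\right|_{H^{s'-1}}+\left|f\right|_{H^{s'}}\left|g\right|_{L^\infty}\big)$ (the periodic analogue, available via the composition/product machinery in Proposition~\ref{prop.composition_estimates}), applied with $f=A_j(\bU_N)$ and $g=\partial_{x_j}\bU_N$. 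The first piece gives $\left|\nabla A_j(\bU_N)\right|_{L^\infty}\left|\partial_{x_j}\bU_N\right|_{H^{s'-1}}\lesssim C(\left|\bU_N\right|_{H^s})\left|\bU_N\right|_{H^s}\left|\bU_N\right|_{H^{s'}}$, and the second $\left|A_j(\bU_N)\right|_{H^{s'}}\left|\partial_{x_j}\bU_N\right|_{L^\infty}$ requires the composition estimate controlling $\left|A_j(\bU_N)\right|_{H^{s'}}\lesssim C(\left|\bU_N\right|_{L^\infty})\left|\bU_N\right|_{H^{s'}}$ together with $\left|\partial_{x_j}\bU_N\right|_{L^\infty}\lesssim\left|\bU_N\right|_{H^s}$; the key is that the top-order $H^{s'}$ norm lands on a single factor, leaving the other factors in the lower, fixed $H^s$ (equivalently $W^{1,\infty}$) norm. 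Collecting all terms, dividing by $\left|\bU_N\right|_{H^{s'}}$ and using $\tfrac{\dd}{\dd t}\left|\bU_N\right|_{H^{s'}}=\tfrac{1}{\left|\bU_N\right|_{H^{s'}}}\cdot\tfrac12\tfrac{\dd}{\dd t}\left|\bU_N\right|_{H^{s'}}^2$ yields~\eqref{eq.sym_tder_est}.
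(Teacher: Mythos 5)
Your proof is correct and follows essentially the same route as the paper: an energy estimate on $\Lambda^{s'}\bU_N$, removal of the projection using that $\P_N$ is $L^2$-symmetric and fixes $\Lambda^{s'}\bU_N$, the commutator splitting with the principal term handled by self-adjointness of $A_j$ plus integration by parts, and a commutator estimate that places the top-order $H^{s'}$ norm on a single factor. The only cosmetic difference is that you invoke a Kato--Ponce type commutator estimate and attribute it to the product/composition machinery of Proposition~\ref{prop.composition_estimates} (from which it does not actually follow), whereas the paper uses its dedicated Proposition~\ref{prop.commutator_s} with $s_0=s-1$ (together with the composition estimates applied to $A_j(\bU_N)-A_j(\bz)$, whose constant part commutes with $\Lambda^{s'}$), which is the tool you should cite and which yields exactly the same bound.
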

	\begin{proof}
		Notice $\bU_N=\P_N\bU_N $ and hence we have smoothness in space, $\bU_N\in \cC(I_N; H^\sigma((2\pi\TT)^d)^n)$ for any $\sigma\in\RR$. We infer smoothness in time, $\bU_N\in \cC^1(I_N; H^\sigma((2\pi\TT)^d)^n)$, using eq.~\eqref{eq.sym-sharp}, product and composition estimates in $H^s((2\pi\TT)^d)^n$ ---Propositions~\ref{prop.product_estimates} and~\ref{prop.composition_estimates}--- and that $\P_N:H^s((2\pi\TT)^d)^n\to H^\sigma((2\pi\TT)^d)^n$ is bounded.  Denote \(\dot \bU_N \coloneq \Lambda^{s'} \bU_N\).   Using that \(\bU_N\) satisfies the system~\eqref{eq.sym-sharp}, that \(\P_N\) is symmetric for the $L^2$ inner-product and \({\P_N \dot \bU_N= \dot \bU_N}\), and finally that $A_j$ are self-adoint for all \(j\in \{1,\ldots, d\}\) and integration by parts, we have
		\begin{align*}
			\frac{1}{2} \frac{\dd}{\dd t}\left|\dot\bU_N \right|_{L^2}^2  
			&= \big(\Lambda^{s'} \partial_t \bU_N, \dot \bU_N\big)_{L^2} = - \big( \P_N (\sum_{j= 1}^d [\Lambda^{s'} ,A_j(\bU_N)]\partial_{x_j} \bU_N), \dot \bU_N\big)_{L^2} - \big( \P_N (\sum_{j= 1}^d A_j(\bU_N)\partial_{x_j} \dot \bU_N), \dot \bU_N\big)_{L^2}\\
			& =- \big(\sum_{j= 1}^d [\Lambda^{s'} ,A_j(\bU_N)]\partial_{x_j} \bU_N, \dot \bU_N\big)_{L^2} - \big(\sum_{j= 1}^d A_j(\bU_N)\partial_{x_j} \dot \bU_N, \dot \bU_N\big)_{L^2}\\
			&=- \big(\sum_{j= 1}^d [\Lambda^{s'} ,A_j(\bU_N)]\partial_{x_j} \bU_N, \dot \bU_N\big)_{L^2} + \frac{1}{2}\big(\sum_{j= 1}^d [\partial_{x_j},A_j(\bU_N)] \dot \bU_N,  \dot \bU_N\big)_{L^2}.
		\end{align*}
		Using the commutator and composition estimates of Propositions~\ref{prop.commutator_s},\ref{prop.composition_estimates} with $s_0=s-1$ for the first contribution, the continuous Sobolev embedding $H^{s}((2\pi\TT)^d)^n\subset W^{1,\infty}((2\pi\TT)^d)^n$ (Proposition~\ref{prop.embedding}) for the second as well the Cauchy--Schwarz inequality gives the bound
		\begin{equation*}
			\frac{1}{2} \frac{\dd}{\dd t}\left|\dot\bU_N \right|_{L^2}^2 \leq C( \left|\bU_N \right|_{H^{s}} )\left|\bU_N \right|_{H^{s}}\left| \dot \bU_N\right|_{L^2}^2,
		\end{equation*}
		which yields the desired inequality.
	\end{proof}
	Now we proceed to prove Proposition~\ref{prop.sym_sharp_num_bound}. 

	\begin{proof}[Proof of Proposition~\ref{prop.sym_sharp_num_bound}.]
		For each \(N>0\), existence and uniqueness of a local-in-time solution \(\bU_N\) to~\eqref{eq.sym-sharp} follows from its formulation as a system of ODEs in the Banach space $H^s((2\pi\TT)^d)^n$ (using eq.~\eqref{eq.sym-sharp}, product and composition estimates in $H^{s-1}((2\pi\TT)^d)^n$ ---Propositions~\ref{prop.product_estimates} and~\ref{prop.composition_estimates}--- and that $\P_N:H^{s-1}((2\pi\TT)^d)^n\to H^s((2\pi\TT)^d)^n$ is bounded) and the Picard--Lindelöf theorem. We denote \(I_N\) the maximal interval of existence, and notice as in the proof of Lemma~\ref{lem.sym_tder_est} that $\bU_N\in \cC^1(I_N; H^{s'}((2\pi\TT)^d)^n)$ for any $s'\geq s$.
		
		The second part of the proposition is an immediate consequence of Lemma~\ref{lem.sym_tder_est} and Grönwall's inequality, using that $\left| \bU_N\vert_{t=0} \right|_{H^{s'}}=\left|\P_N \bU^0 \right|_{H^{s'}}\leq \left|\bU^0 \right|_{H^{s'}}$. To show the first part of the proposition, we use a standard continuity argument. Let
		\[\varphi_N:t\mapsto \sup_{t'\in[0,t]}\left| \bU_N(t',\cdot) \right|_{H^{s}} \quad \text{ and } \quad J_N\coloneq\big\{t\in I_N\cap\RR^+ \ : \  \varphi_N(t)\leq 2\left|\bU^0 \right|_{H^s}\big\}.\]
		Since $\bU_N\in \cC(I_N; H^{s}((2\pi\TT)^d)^n)$, $\varphi_N\in \cC(I_N\cap \RR^+; \RR)$ is non-decreasing and  $J_N=\varphi_N^{-1}([0,2\left|\bU^0 \right|_{H^s}])$ is a closed interval. Let us prove that one can set $T>0$ independently of $N$ such that $J_N\cap [0, T/\left|\bU^0 \right|_{H^{s}}]$ is a non-empty open subset of $[0, T/\left|\bU^0 \right|_{H^{s}}]$. Notice $0\in J_N$. Let $t\in J_N\cap [0, T/\left|\bU^0 \right|_{H^{s}}]$ . By the second part of the proposition, {we can set $C=2C^*$} depending only on $s$ and $\left|\bU^0 \right|_{H^s}$ such that
		\begin{equation}\label{eq.sym_sharp_est1}
			\left| \bU_N(t,\cdot) \right|_{H^s} \leq \left| \bU^0 \right|_{H^{s}} \exp\left( C\left|\bU^0 \right|_{H^s}t \right) \leq \left| \bU^0 \right|_{H^{s}}\exp(CT).
		\end{equation}
		Choosing $T=\ln(3/2)/C$, we find that $\varphi_{N}(t)\leq \frac32 \left| \bU^0 \right|_{H^{s}}$, and hence ---by the continuity and monotonicity of $\varphi_N$--- there exists $\delta>0$ such that $[0,t+\delta]\in J_N$. This concludes the proof that $J_N\cap [0, T/\left|\bU^0 \right|_{H^{s}}]$ is a non-empty, connected, closed and open subset of $[0, T/\left|\bU^0 \right|_{H^{s}}]$. Hence $I_N\supset J_N\supset [0, T/\left|\bU^0 \right|_{H^{s}}]$ and estimate~\eqref{eq.sym_sharp_est1} concludes the proof.
	\end{proof}

	Having established a bound on the semi-discretized solution \(\bU_N\), we proceed to estimate the difference between the solution \(\bU_N\) to~\eqref{eq.sym-sharp} and \(\bU\), solution to~\eqref{eq.hyp}. 

	\begin{Proposition}[Convergence]\label{prop.sym-sharpdifference}
		Let $s>d/2+1$, \(M>0\). Suppose that \(\bU^0\in H^s((2\pi\TT)^d)^n\) with \({\norm{\bU^0}_{H^s}\leq M}\). Suppose that for all \(j\in \{1,\ldots, d\}\), \(A_j(\cdot)\) satisfies the Assumption~\ref{assump.A1} and is self-adjoint. Denote $\bU\in\cC(I;H^s((2\pi\TT)^d)^n)$ the maximal-in-time classical solution to~\eqref{eq.hyp}, and $\bU_N\in\cC(I_N;H^s((2\pi\TT)^d)^n)$ the maximal-in-time classical solution to~\eqref{eq.sym-sharp}. Let $T>0$ be the minimum value of Propositions~\ref{prop.sym-hyp} and~\ref{prop.sym_sharp_num_bound}. For all \({0\leq t\leq T/\norm{\bU^0}_{H^s}}\), there is a $C>0$, depending only on $s$ and \(M\) such that for any $0\leq r\leq s$,
		\[\norm{(\bU-\bU_N)(t,\cdot)}_{H^r}\leq C\, \norm{\bU^0}_{H^s}\, N^{r-s} .\]
		Moreover, for every compact subset \(I^* \subset I\), there is an \(N_0\in\NN\) and $C^*>0$, depending only on $s$, $|I^*|$ and $M^*\coloneq\sup_{t\in I^*} \left|\bU(t,\cdot)\right|_{H^s}$ such that for all $N\geq N_0$, $I_N\supset I^*$ and for any $0\leq r\leq s$,
		\[\sup_{t\in I^*}\norm{(\bU-\bU_N)(t,\cdot)}_{H^r}\leq C^*\, M^*\, N^{r-s}.\]
	\end{Proposition}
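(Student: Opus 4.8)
The plan is to derive an energy estimate for the difference $\bU-\bU_N$ and close it via Grönwall, using the uniform bounds already available from Propositions~\ref{prop.sym-hyp} and~\ref{prop.sym_sharp_num_bound}. First I would set $\bV_N\coloneq \P_N\bU-\bU_N$, since both $\P_N\bU$ and $\bU_N$ live in $\mathcal{T}_N^n$, which keeps the comparison inside the finite-dimensional space and lets me write the triangle inequality $|\bU-\bU_N|_{H^r}\leq |\bU-\P_N\bU|_{H^r}+|\bV_N|_{H^r}$, the first term being controlled by the spectral convergence estimate~\eqref{eq.PNest} by $C\langle N\rangle^{r-s}|\bU|_{H^s}\leq C\,M\,N^{r-s}$ on the relevant time interval. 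The real work is estimating $|\bV_N|_{H^r}$.

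To that end I would apply $\P_N$ to~\eqref{eq.hyp} and subtract~\eqref{eq.sym-sharp} to get an evolution equation for $\bV_N$ of the schematic form $\partial_t\bV_N+\P_N\big(\sum_j A_j(\bU_N)\partial_{x_j}\bV_N\big)=\mathcal{R}_N$, where the remainder $\mathcal{R}_N$ collects the commutator $[\P_N,\cdot]$ acting on $\bU$ together with the difference $\P_N\big(\sum_j(A_j(\bU)-A_j(\bU_N))\partial_{x_j}\bU\big)$. The leading-order structure on the left is exactly the symmetric transport operator treated in Lemma~\ref{lem.sym_tder_est}, so testing against $\Lambda^{2r}\bV_N$ (for $0\leq r\leq s-1$, say, and then treating the borderline $r$ separately or by interpolation) and using self-adjointness of $A_j$ plus the commutator estimate yields $\tfrac{\dd}{\dd t}|\bV_N|_{H^r}\leq C(|\bU_N|_{H^s}+|\bU|_{H^s})|\bV_N|_{H^r}+|\mathcal{R}_N|_{H^r}$. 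For the source term, the composition/product estimates (Propositions~\ref{prop.product_estimates},~\ref{prop.composition_estimates}) give $|A_j(\bU)-A_j(\bU_N)|\lesssim C(M)|\bU-\bU_N|$, hence the quasilinear difference contributes $\lesssim C(M)\,|\bV_N|_{H^r}+C(M)M\,N^{r-s}$ after reinserting $|\bU-\P_N\bU|_{H^r}$; the commutator piece $[\P_N,A_j(\bU_N)]\partial_{x_j}\bU$ is bounded by $C(M)\,M\,N^{r-s}$ directly from~\eqref{eq.PNest} since $\P_N$ annihilates high modes and $\bU\in H^s$. Since the initial datum $\bV_N|_{t=0}=\P_N\bU^0-\P_N\bU^0=\bz$ vanishes, Grönwall closes the bound as $|\bV_N|_{H^r}\leq C\,M\,N^{r-s}$ on $[0,T/|\bU^0|_{H^s}]$, which combined with the triangle inequality gives the first claim.

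The main obstacle I anticipate is the quasilinear nature of the coefficient difference: the term $(A_j(\bU)-A_j(\bU_N))\partial_{x_j}\bU$ couples $\bV_N$ back into the estimate at top order in a way that must be handled carefully so the constant stays uniform in $N$—this is where using $|\bU_N|_{H^s}\leq 2M$ (Proposition~\ref{prop.sym_sharp_num_bound}) and the Lipschitz-in-$\bU$ composition estimates is essential, and where one must avoid losing a derivative (keeping $r\leq s-1$ for the energy argument and recovering $r=s$ by noting the left-hand side only ever needs $H^r$-norms paired against the $H^s$-control of the backgrounds). For the second, refined statement I would run a continuation/bootstrap argument: on any compact $I^*\subset I$ the continuous solution stays bounded by $M^*$, so choosing $N_0$ large enough that the $N^{r-s}$ error is smaller than the gap to $2M^*$, the uniform bound of Proposition~\ref{prop.sym_sharp_num_bound} propagates $\bU_N$ across all of $I^*$ (so $I_N\supset I^*$), and rerunning the same difference estimate with $M^*$ in place of $M$ and $|I^*|$ controlling the time horizon yields $\sup_{t\in I^*}|\bU-\bU_N|_{H^r}\leq C^*M^*N^{r-s}$.
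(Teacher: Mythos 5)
Your overall architecture (energy estimate for a difference, Grönwall, triangle inequality, bootstrap on compact subsets of $I$) matches the paper's, but the proof breaks at the single estimate that carries all the difficulty: the claim that the source term $[\P_N,A_j(\bU_N)]\partial_{x_j}\bU$ is bounded in $H^r$ by $C(M)\,M\,N^{r-s}$ ``directly from~\eqref{eq.PNest}''. This is false in general; the correct bound is only $\cO(N^{r+1-s})$. Your source term is $\P_N\big(A_j(\bU_N)\partial_{x_j}(\Id-\P_N)\bU\big)$, and the derivative falling on the high-frequency tail costs a full factor $N$: \eqref{eq.PNest} only gives $\norm{\partial_{x_j}(\Id-\P_N)\bU}_{H^r}\leq \norm{(\Id-\P_N)\bU}_{H^{r+1}}\lesssim N^{r+1-s}\norm{\bU}_{H^s}$, and no cancellation rescues this. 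Concretely, for $d=1$ take $A_j(\bU_N)=\cos x$ (e.g. Burgers with $\bU_N=\cos x\in\mathcal T_N$) and $\bU=(N+1)^{-s}\sin((N+1)x)$, so $\norm{\bU}_{H^s}\approx 1$; then $\P_N\big(A_j(\bU_N)\partial_x(\Id-\P_N)\bU\big)=\tfrac12(N+1)^{1-s}\cos(Nx)$, whose $H^r$ norm is $\approx \tfrac12 N^{r+1-s}$, i.e. $N$ times larger than you claim. This is exactly the known failure of \emph{sharp} low-pass filters to satisfy commutator estimates with a gain of one derivative uniformly in $N$ --- the very obstruction the paper is organized around (cf.\ Proposition~\ref{prop.commutator_0}, which holds for $\S_N$ but not for $\P_N$). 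With the correct bound your Grönwall argument yields only $\norm{\bU-\bU_N}_{H^r}\lesssim N^{r+1-s}$, a strictly weaker rate of the Bardos--Tadmor type that the paper explicitly improves upon; moreover your bootstrap for compact $I^*\subset I$ would then require $s>d/2+2$ instead of $s>d/2+1$, since the convergence at level $H^s$ from auxiliary regularity $s'$ needs $s'>s+1$.

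The paper closes this gap by never estimating a sharp-cutoff commutator at all. It compares $\bD_N=\bU-\bU_N$ (not $\P_N\bU-\bU_N$), so the projection error appears as $(\Id-\P_N)\big(\sum_j A_j(\bU_N)\partial_{x_j}\bU_N\big)$ on the right-hand side, and then applies the \emph{smooth} filter $\S_N$ to the difference equation: since $\S_N(\Id-\P_N)=0$, this source term vanishes identically rather than being estimated. The transport term is symmetrized through $\S_N=\S_N^{1/2}\S_N^{1/2}$, the commutators $[\S_N^{1/2},A_j(\bU_N)]$ are controlled uniformly in $N$ with a gain of one derivative by Proposition~\ref{prop.commutator_0}, and one obtains a sharp $L^2$ bound on $\S_N^{1/2}\bD_N$ with vanishing initial data; the unfiltered remainder is handled by $\norm{(\Id-\S_N^{1/2})\bD_N}_{L^2}\leq\langle N/2\rangle^{-s}\norm{\bD_N}_{H^s}\lesssim N^{-s}\norm{\bU^0}_{H^s}$, and $0\leq r\leq s$ follows by interpolation against the (merely bounded) $H^s$ norm. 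If you wish to salvage your route of comparing with $\P_N\bU$, you must supply a mechanism recovering the lost factor of $N$; as written, the proposal proves a weaker statement than the proposition.
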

	\begin{proof} 		
		We shall first prove the result for $t \in [0,T/\norm{\bU^0}_{H^s}]$. Let us assume first that $\bU^0\in H^{s+1}((2\pi\TT)^d)^n$ so that $\bU\in\cC(I;H^{s+1}((2\pi\TT)^d)^n)\cap \cC^1(I;H^{s}((2\pi\TT)^d)^n) $, the general case being deduced afterwards. Recall that $T>0$ is the minimum value of Propositions~\ref{prop.sym-hyp} and~\ref{prop.sym_sharp_num_bound}, so that we have \(\left|\bU(t,\cdot) \right|_{H^s}\leq 2 \left|\bU^0 \right|_{H^s}\) and \(\left|\bU_N(t,\cdot) \right|_{H^s}\leq 2 \left|\bU^0 \right|_{H^s}\) for all $ N\in\NN$. 
		Denote $\bD_N\coloneq\bU-\bU_N$ and notice
		\[\partial_t \bD_N + \sum_{j=1}^d A_j(\bU_N)\partial_{x_j}\bD_N+\sum_{j=1}^d (A_j(\bU)-A_j(\bU_N))\partial_{x_j}\bU=-(\Id-\P_N)\left(\sum_{j=1}^d A_j(\bU_N)\partial_{x_j}\bU_N\right).\]
		Now we apply the smooth low-pass filter \(\S_N\) and use that $\S_N(\Id-\P_N)=0$:
		\begin{multline*}
			\partial_t \S_N\bD_N + \S_N^{1/2}\left(\sum_{j=1}^d A_j(\bU_N)\partial_{x_j}\S_N^{1/2}\bD_N\right)+ \S_N^{1/2}\left(\sum_{j=1}^d[\S_N^{1/2}, A_j(\bU_N)]\partial_{x_j}\bD_N\right)\\+\S_N\left(\sum_{j=1}^d (A_j(\bU)-A_j(\bU_N))\partial_{x_j}\bU\right)=\bz.	
		\end{multline*} 
		Testing against $\bD_N$, using that $\S_N$ is symmetric for the $L^2((2\pi\TT)^d)^n$ inner-product and commutes with \(\partial_t\) and after some rearranging we get
		\begin{align*}
			\frac{\dd}{\dd t}\left| \S_N^{1/2}\bD_N\right|_{L^2}^2 = - \left(\sum_{j=1}^d A_j(\bU_N)\partial_{x_j}\S_N^{1/2}\bD_N,  \S_N^{1/2}\bD_N \right)_{L^2}- \left( \sum_{j=1}^d[\S_N^{1/2}, A_j(\bU_N)]\partial_{x_j}\bD_N,  \S_N^{1/2}\bD_N \right)_{L^2}\\
			- \left( \S_N^{1/2}\left(\sum_{j=1}^d (A_j(\bU)-A_j(\bU_N))\partial_{x_j}\bU\right),  \S_N^{1/2}\bD_N \right)_{L^2}.
		\end{align*}
		Using integration by parts and the properties of \(A_j(\cdot)\) ---Assumption~\ref{assump.A1} together with the composition estimate of Proposition~\ref{prop.composition_estimates} and self-adjointness--- and the continuous Sobolev embedding $H^s((2\pi\TT)^d)^n\subset W^{1,\infty}((2\pi\TT)^d)^n$ (Proposition~\ref{prop.embedding}), the first term on the right-hand side is bounded by  \(C(\left|\bU_N \right|_{H^{s}})\left|\bU_N \right|_{H^{s}}\left|\S_N^{1/2}\bD_N \right|_{L^2}^2\). Using the commutator estimate of Proposition~\ref{prop.commutator_0} and the composition estimate of Proposition~\ref{prop.composition_estimates}, the second term on the right is bounded by \(C(\left|\bU_N \right|_{H^{s}})\left|\bU_N \right|_{H^{s}}\left|\bD_N \right|_{L^2}\left|\S_N^{1/2}\bD_N \right|_{L^2}\). By the composition estimate of Proposition~\ref{prop.composition_estimates}, the third term is bounded by \(C(\left|\bU \right|_{H^{s}}, \left|\bU_N \right|_{H^s})\left|\bU \right|_{H^{s}}\left|\bD_N \right|_{L^2}\left|\S_N^{1/2}\bD_N \right|_{L^2}\), where we again use Assumption~\ref{assump.A1} as well as the boundedness of \(\S_N^{1/2}\). In all three estimates, we also used the Cauchy--Schwarz inequality. Altogether, we get
		\[\frac12\frac{\dd}{\dd t}\big( \norm{\S_{N}^{1/2}\bD_N}_{L^2}^2 \big) \leq C(\norm{\bU}_{H^s},\norm{\bU_N}_{H^s})\left( \norm{\bU_N}_{H^s} + \norm{\bU}_{H^s} \right)\big(\norm{\S_{N}^{1/2}\bD_N}_{L^2}+\norm{\bD_N}_{L^2}\big)\norm{\S_{N}^{1/2}\bD_N}_{L^2}.\]
		Now we remark that since the symbol $S_N$ satisfies $S_N(\cdot)\in[0,1]$ and $S_N(\bk)=1$ if $\max_{j=1,\dots,d}|k_j|\leq N/2$ we have \(\big\vert (1-S_N^{1/2}(\cdot))\langle \cdot\rangle^{-s}\big\vert_{L^\infty} \leq \langle N/2\rangle^{-s}\) and hence
		\[\norm{\bD_N}_{L^2}\leq \norm{\S_{N}^{1/2}\bD_N}_{L^2}+\norm{(\Id-\S_{N}^{1/2})\bD_N}_{L^2}\leq \norm{\S_{N}^{1/2}\bD_N}_{L^2}+\langle N/2\rangle^{-s}\norm{\bD_N}_{H^s}.\]
		Hence since $\norm{\bD_N}_{H^s}\leq \norm{\bU}_{H^s}+\norm{\bU_N}_{H^s}\leq 4 \norm{\bU^0}_{H^s}$ by the triangle inequality we have
		\[\frac12\frac{\dd}{\dd t}\big( \norm{\S_{N}^{1/2}\bD_N}_{L^2}^2 \big) \leq C(\norm{\bU^0}_{H^s})\left|\bU^0 \right|_{H^s}\big(\norm{\S_{N}^{1/2}\bD_N}_{L^2}+ \norm{\bU^0}_{H^s}N^{-s}\big)\norm{\S_{N}^{1/2}\bD_N}_{L^2}\]
		and we infer by Grönwall's Lemma that
		\[\norm{\S_{N}^{1/2}\bD_N(t,\cdot)}_{L^2}\leq C(\norm{\bU^0}_{H^s})\left|\bU^0 \right|_{H^s}\big(\norm{\S_{N}^{1/2}\bD_N\big\vert_{t=0}}_{L^2} +t  N^{-s}\norm{\bU^0}_{H^s}\big) \, \exp\Big(C(\norm{\bU^0}_{H^s})\left|\bU^0 \right|_{H^s} t\Big).\]
		The desired estimate for $r=0$ follows by using that $\S_{N}^{1/2}\bD_N\big\vert_{t=0}=\S_{N}^{1/2}(\Id-\P_N)\bU^0=0$, $t \in [0,T/\norm{\bU^0}_{H^s}]$ and again 
		\[\norm{\bD_N}_{L^2}\leq \norm{\S_{N}^{1/2}\bD_N}_{L^2}+\langle N/2\rangle^{-s}\norm{\bU^0}_{H^s}.\]
		The general case $0\leq r\leq s$ follows by the interpolation inequality, Proposition~\ref{prop.interpolation_inequality}, and using once again that $\norm{\bU-\bU_N}_{H^s}\leq 4\norm{\bU^0}_{H^s}$ by the triangle inequality.
		
		Let us now explain why the same result holds in the general case $\bU^0\in H^{s}((2\pi\TT)^d)^n$. Consider $(\bU^0_k)_{k\in\NN}$ a sequence (constructed by Fourier truncation) such that for all $k\in\NN$, $\bU^0_k\in H^{s+1}((2\pi\TT)^d)^n$ and $\bU^0_k \to \bU^0$ in $ H^{s}((2\pi\TT)^d)^n$ as $k\to\infty$, and $\norm{\bU^0_k}_{H^s}\leq \norm{\bU^0}_{H^s}$. Then we can apply the above for each $k\in\NN$ and infer that $\bU_k$ (respectively $\bU_{k,N}$) the solutions to~\eqref{eq.hyp} (respectively~\eqref{eq.sym-sharp}) emerging from the initial data $\bU^0_k $ satisfy for all \({0\leq t\leq T/\norm{\bU^0}_{H^s}}\) and for any $0\leq r\leq s$,
		\[\norm{(\bU_k-\bU_{k,N})(t,\cdot)}_{H^r}\leq C\, \norm{\bU^0}_{H^s}\, N^{r-s} ,\]
		where $C>0$ depends only on $s$ and \(M\), and in particular is uniform with respect to $k$. We now pass to the limit as $k\to\infty$. By standard estimates on the linearized systems from~\eqref{eq.hyp} and~\eqref{eq.sym-sharp} (see {\em e.g.}~\cite[Proposition~7.1.8]{Metivier08}), we have $\bU_k\to \bU$ and $\bU_{k,N}\to \bU_N$ in $\cC([0,T/\norm{\bU^0}_{H^s}];L^2((2\pi\TT)^d)^n)$ as $k\to\infty$, where we denote  $\bU\in \cC([0,T/\norm{\bU^0}_{H^s}];H^s((2\pi\TT)^d)^n)$ (respectively $\bU_{N}\in\cC([0,T/\norm{\bU^0}_{H^s}];H^s((2\pi\TT)^d)^n)$) the solutions to~\eqref{eq.hyp} (respectively~\eqref{eq.sym-sharp}) emerging from the initial data $\bU^0 $, as in the statement of the Proposition. Because the above estimate is uniform with respect to $k$, we infer as desired that the limits satisfy for all \({0\leq t\leq T/\norm{\bU^0}_{H^s}}\) and for any $0\leq r\leq s$,
		\[\norm{(\bU-\bU_{N})(t,\cdot)}_{H^r}\leq C\, \norm{\bU^0}_{H^s}\, N^{r-s} .\]

		Let us now prove the proposition for general $I^*$ compact subset of $I$. Without loss of generality, we can assume $0\in I^*$ and we will focus on positive times, $t\in I^*\cap \RR^+$, negative times being obtained by time-symmetry. Let $d/2+1<s_0<s$. Denote $M^*=2\sup_{t\in I^*}\left|\bU\right|_{H^{s_0}}$ and $C^*$ the constant depending on $s_0,s,M^*$ as in Proposition~\ref{prop.sym_sharp_num_bound} with $s_0$ playing the role of $s$ and $s$ playing the role of $s'$, and $M=2\left|\bU^0\right|_{H^{s}}\exp(C^*M^* T^*)$, where \(T^* \coloneq \sup(I^*)\). We set
		\[J_N=\big\{ t\in I_N\cap\RR^+ \ : \ \sup_{t'\in[0,t]}\left|\bU_N(t',\cdot)\right|_{H^{s_0}}\leq M^* \text{ and } \sup_{t'\in[0,t]}\left|\bU_N(t',\cdot)\right|_{H^{s}}\leq M\big\},\]
		and our aim is to prove that there exists $N_0\in\NN$ such that for any $N\geq N_0$, $J_N\supset I^*\cap\RR^+$. We use the continuity argument. By continuity of $\bU_N\in \cC(I_N;H^{s}((2\pi\TT)^d)^n)$, we have that $J_N$ is a non-empty closed interval. Let us now prove that $J_N\cap I^*\cap \RR^+$ is an open subset of  $I^*\cap \RR^+$. Let $t\in J_N\cap I^*\cap \RR^+$. We can follow the proof of the first part of the proposition replacing the bound $\norm{\bU}_{H^s}+\norm{\bU_N}_{H^s}\leq 4 \norm{\bU^0}_{H^s}$ with $\norm{\bU}_{H^{s}}+\norm{\bU_N}_{H^{s}}\leq M^*+M$ to infer that there exists $C$, depending only on $s,M^*$ and $T^*$ such that
		for all $0\leq r\leq s$,
		\[\sup_{t'\in[0,t]}\norm{\bU(t',\cdot)-\bU_N(t',\cdot)}_{H^r}\leq C (M^*+M)N^{r-s} .\]
		Applying this estimate with $r=s_0$, it follows that there exists $N_0\in\NN$ depending only on $s_0,s,M^*$ and $T^*$ such that for any $N\geq N_0$, 
		\[ \sup_{t'\in[0,t]}\norm{\bU_N(t',\cdot)}_{H^{s_0}}\leq \sup_{t'\in[0,t]}\norm{\bU(t',\cdot)}_{H^{s_0}}+ \sup_{t'\in[0,t]}\norm{\bU(t',\cdot)-\bU_N(t',\cdot)}_{H^{s_0}}\leq \frac23 M^* .\]
		Moreover, using the second part of Proposition~\ref{prop.sym_sharp_num_bound} (recall $s_0$ playing the role of $s$ and $s$ playing the role of $s'$), we have
		\[ \sup_{t'\in[0,t]}\norm{\bU_N(t',\cdot)}_{H^{s}}\leq  \left| \bU^0 \right|_{H^{s}} \exp(C^* M^* t) \leq  \frac12 M .\]
		\sloppy{This shows, using again the continuity of $\bU_N\in \cC(I_N;H^{s}((2\pi\TT)^d)^n)$, that there exists $\delta>0$ such that ${[0,t+\delta]\subset J_N}$, and hence that $J_N\cap I^*\cap \RR^+$ is a non-empty, connected, closed and open subset of $I^*\cap \RR^+$. Hence $ J_N\supset  I^*\cap \RR^+$. Moreover, the desired estimate has been proven along the argument. This concludes the proof.}
	\end{proof}

	\subsection{Discretization with smooth low-pass filters}
	\label{sec.sym.smooth}
	
	Consider now solutions to the systems semi-discretized with a smooth low-pass filter, given by~\eqref{eq.hyp-smooth0} and~\eqref{eq.hyp-smooth}, and which we recall here for the sake of clarity. 
	\begin{equation}\label{eq.sym-smooth0}
		\partial_t \bU_N+\S_N\left(\sum_{j=1}^d A_j(\bU_N)\partial_{x_j}\bU_N\right)=\bz, \quad \bU_N\vert_{t=0} = \P_N\bU^0,
	\end{equation}
	
	\begin{equation}\label{eq.sym-smooth}
		\partial_t \bU_N+\sum_{j=1}^d (A_j^0 + \S_N (A_j^1(\bU_N)[\circ]))\partial_{x_j}\bU_N=\bz, \quad \bU_N\vert_{t=0} = \P_N \bU^0,
	\end{equation}
	where \(\S_N\) is the smooth low-pass filter described in Section~\ref{sec.intro}, and $A_j(\cdot)=A_j^0 +A_j^1(\cdot)$, $A_j^0=A_j(\bz)$.

	For symmetric systems, there is no great difference between the semi-discretization with sharp versus smooth low-pass filters. The results are the same, although the proofs must be adapted slightly.  We outline the results and proofs, but refer to the previous section for technical details. 
	
	Our result regarding existence and boundedness of solutions \(\bU_N\) to the semi-discrete problems (uniformly with respect to \(N\)) extends to the case of smooth low-pass filters.
	\begin{Proposition}[Uniform estimates]\label{prop.sym-smoothnum-bound}
		The statement of Proposition~\ref{prop.sym_sharp_num_bound} holds replacing~\eqref{eq.sym-sharp} with~\eqref{eq.sym-smooth0} or~\eqref{eq.sym-smooth}.
	\end{Proposition}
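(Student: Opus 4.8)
The plan is to transpose, almost verbatim, the architecture of the proof of Proposition~\ref{prop.sym_sharp_num_bound}: establish local-in-time existence of $\bU_N$, prove the analogue of the a priori estimate~\eqref{eq.sym_tder_est} of Lemma~\ref{lem.sym_tder_est}, and then close the uniform-in-$N$ bound and the higher-regularity persistence by the same Grönwall plus continuity-in-time argument, which is insensitive to the precise form of the filter. Local existence is in fact cleaner here: for both~\eqref{eq.sym-smooth0} and~\eqref{eq.sym-smooth} the right-hand side maps $\mathcal{T}_N^n$ into itself (multiplication by a polynomial in $\bU_N$ raises the degree, and $\S_N$ --- as well as the constant-coefficient operator $A_j^0\partial_{x_j}$ --- maps back into $\mathcal{T}_N^n$), so $\bU_N\vert_{t=0}=\P_N\bU^0\in\mathcal{T}_N^n$ forces $\bU_N(t)\in\mathcal{T}_N^n$ for all $t$. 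The evolution is then an ODE with a polynomial (hence smooth) vector field on the finite-dimensional space $\mathcal{T}_N^n$, Cauchy--Lipschitz applies, and the persistence $\bU_N\in\cC^1(I_N;H^{s'})$ for every $s'\geq s$ is immediate since on $\mathcal{T}_N^n$ all $H^\sigma$-norms are equivalent.

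The heart of the matter is the energy estimate, and this is where the proof genuinely departs from the sharp-filter case. Writing $\dot\bU_N\coloneq\Lambda^{s'}\bU_N$ for~\eqref{eq.sym-smooth0}, using that $\Lambda^{s'}$ commutes with the Fourier multiplier $\S_N$ and that $\S_N=\S_N^{1/2}\S_N^{1/2}$ is self-adjoint with real symbol, I would reduce to
\[\tfrac12\tfrac{\dd}{\dd t}\left|\dot\bU_N\right|_{L^2}^2 = -\sum_{j=1}^d\big(\S_N^{1/2}\Lambda^{s'}(A_j(\bU_N)\partial_{x_j}\bU_N),\,\S_N^{1/2}\dot\bU_N\big)_{L^2}.\]
Splitting $\Lambda^{s'}(A_j\partial_{x_j}\bU_N)=[\Lambda^{s'},A_j]\partial_{x_j}\bU_N+A_j\partial_{x_j}\dot\bU_N$, the commutator piece is controlled by $C(\left|\bU_N\right|_{H^s})\left|\bU_N\right|_{H^s}\left|\dot\bU_N\right|_{L^2}^2$ via Propositions~\ref{prop.commutator_s} and~\ref{prop.composition_estimates} exactly as in Lemma~\ref{lem.sym_tder_est}. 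For the principal piece I would commute $\S_N^{1/2}$ through $A_j$,
\[\S_N^{1/2}A_j(\bU_N)\partial_{x_j}\dot\bU_N = A_j(\bU_N)\partial_{x_j}\big(\S_N^{1/2}\dot\bU_N\big)+[\S_N^{1/2},A_j(\bU_N)]\partial_{x_j}\dot\bU_N,\]
so that the first summand, tested against $\S_N^{1/2}\dot\bU_N$, is bounded using self-adjointness of $A_j$ and integration by parts (producing $\tfrac12([\partial_{x_j},A_j]\,\S_N^{1/2}\dot\bU_N,\S_N^{1/2}\dot\bU_N)_{L^2}$, now applied to $\S_N^{1/2}\dot\bU_N$) together with the embedding $H^s\subset W^{1,\infty}$ (Proposition~\ref{prop.embedding}). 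This gives the analogue $\tfrac{\dd}{\dd t}\left|\bU_N\right|_{H^{s'}}\leq C(\left|\bU_N\right|_{H^s})\left|\bU_N\right|_{H^s}\left|\bU_N\right|_{H^{s'}}$ of~\eqref{eq.sym_tder_est}, from which Grönwall yields both the uniform $H^s$-bound (through the continuity argument of Proposition~\ref{prop.sym_sharp_num_bound}) and the $H^{s'}$-persistence.

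For the variant~\eqref{eq.sym-smooth} the modifications are purely bookkeeping. The constant-coefficient linear part contributes $-\sum_j(A_j^0\partial_{x_j}\dot\bU_N,\dot\bU_N)_{L^2}$, which vanishes because $A_j^0$ is a self-adjoint constant matrix and integration by parts leaves a perfect $\partial_{x_j}$-derivative integrating to zero on the torus. The remaining nonlinear part $\sum_j\S_N(A_j^1(\bU_N)\partial_{x_j}\bU_N)$ is treated exactly as above with $A_j$ replaced by $A_j^1=A_j-A_j^0$, which is still self-adjoint and still has polynomial entries (vanishing at $\bz$), so the composition and commutator estimates apply unchanged and the same a priori inequality results.

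The step I expect to be the genuine obstacle --- and the only place where the smoothness of the filter is truly used --- is the extra commutator contribution $\big([\S_N^{1/2},A_j(\bU_N)]\partial_{x_j}\dot\bU_N,\,\S_N^{1/2}\dot\bU_N\big)_{L^2}$ generated above. In the sharp case this term simply does not appear, since $\P_N$ is a projection with $\P_N\dot\bU_N=\dot\bU_N$ and the filter can be discarded; here $\S_N^{1/2}$ is not idempotent, cannot be removed, and the $\partial_{x_j}$ must instead be absorbed by a gain of one derivative in $[\S_N^{1/2},A_j]$, uniformly in $N$. This is precisely the role of Proposition~\ref{prop.commutator_0} (used in the same manner in the proof of Proposition~\ref{prop.sym-sharpdifference}), which bounds the term by $C(\left|\bU_N\right|_{H^s})\left|\bU_N\right|_{H^s}\left|\dot\bU_N\right|_{L^2}^2$ and closes the estimate. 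With this term controlled, every remaining detail is identical to the sharp-filter proof.
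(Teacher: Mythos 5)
Your treatment of the a priori estimate --- the heart of the proposition --- is correct and is in substance the paper's own argument: the paper rewrites $\S_N A_j^1(\bU_N)\partial_{x_j}$ as $\S_N^{1/2}A_j^1(\bU_N)\S_N^{1/2}\partial_{x_j}$ plus the commutator term $\S_N^{1/2}[\S_N^{1/2},A_j^1(\bU_N)]\partial_{x_j}$ (see~\eqref{eq.est3}), exploits the $L^2$-symmetry of $A_j^0+\S_N^{1/2}A_j^1(\bU_N)\S_N^{1/2}$ together with integration by parts and the embedding $H^s\subset W^{1,\infty}$, and controls the extra commutator acting on $\partial_{x_j}\Lambda^{s'}\bU_N$ by Proposition~\ref{prop.commutator_0}. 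Your splitting after testing is the same computation organized slightly differently, and your identification of $[\S_N^{1/2},A_j(\bU_N)]\partial_{x_j}\Lambda^{s'}\bU_N$ as the one genuinely new term, to be absorbed by the uniform-in-$N$ gain of one derivative, is exactly right; the Gr\"onwall/continuity conclusion is indeed unchanged from Proposition~\ref{prop.sym_sharp_num_bound}.

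There is, however, a gap in your local well-posedness step for system~\eqref{eq.sym-smooth}. That the vector field preserves $\mathcal{T}_N^n$ and that the datum lies in $\mathcal{T}_N^n$ gives you, via finite-dimensional Cauchy--Lipschitz, existence of a solution taking values in $\mathcal{T}_N^n$ and uniqueness among $\mathcal{T}_N^n$-valued solutions; it does not by itself ``force'' an arbitrary classical solution in $\cC(I_N;H^s((2\pi\TT)^d)^n)$ --- the class in which the proposition asserts uniqueness --- to remain in $\mathcal{T}_N^n$. For~\eqref{eq.sym-smooth0} you can close this by Cauchy--Lipschitz in the Banach space $H^s$, since $\S_N$ filters the whole right-hand side and the vector field is therefore locally Lipschitz from $H^s$ to $H^s$, exactly as in the sharp-filter proof. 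But for~\eqref{eq.sym-smooth} the unfiltered term $A_j^0\partial_{x_j}\bU_N$ loses a derivative, so the vector field only maps $H^s$ to $H^{s-1}$ and no off-the-shelf Banach-space ODE uniqueness applies. The paper's fix is precisely the missing step: apply $(\Id-\P_N)$ to~\eqref{eq.sym-smooth}, use $(\Id-\P_N)\S_N=0$ to obtain $\partial_t(\Id-\P_N)\bU_N+\sum_{j=1}^d A_j^0\partial_{x_j}(\Id-\P_N)\bU_N=0$ with zero initial data, and conclude $(\Id-\P_N)\bU_N\equiv 0$ by uniqueness (an elementary energy estimate) for this linear constant-coefficient symmetric system; only then is every $H^s$-solution confined to $\mathcal{T}_N^n$, making your finite-dimensional argument legitimate.
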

	\begin{proof}
		We consider first the system~\eqref{eq.sym-smooth}. As for the case with the sharp low-pass filter, the key ingredient is an apriori estimate 
				\begin{equation}\label{eq.sym-smoothapriori}
					\frac{\dd}{\dd t}\left|\bU_N \right|_{H^{s'}} \leq C(\left|\bU_N \right|_{H^{s}})\left|\bU_N \right|_{H^{s}}\left|  \bU_N\right|_{H^{s'}}
				\end{equation}
		for $s'\geq s>d/2+1$ and \(\bU_N \in \cC(I_N; H^s((2\pi\TT)^d)^n)\) solution to~\eqref{eq.sym-smooth} on $I_N\subset\RR$ open time interval.
		To show this we first notice that, applying $(\Id-\P_N)$ to~\eqref{eq.sym-smooth} and using that $(\Id-\P_N)\S_N=0$, we have $\partial_t (\Id-\P_N)\bU_N+\sum_{j=1}^d A_j^0 \partial_{x_j}(\Id-\P_N)\bU_N=0$ and $(\Id-\P_N)\bU_N\vert_{t=0}=0$. By uniqueness of the solution to this initial-value problem, we infer $\bU_N=\P_N\bU_N$ and hence $\bU_N \in \cC^1(I; H^\sigma((2\pi\TT)^d)^n)$ for all $\sigma\in\RR$. Then we apply \(\Lambda^{s'}\) to the system~\eqref{eq.sym-smooth}, denote $\dot \bU_N \coloneq\Lambda^{s'}\bU_N$ and infer  
		\begin{multline}\label{eq.est3}
			\partial_t \dot\bU_N+\sum_{j=1}^d (A_j^0 + \S_N^{1/2} A_j^1(\bU_N)[\S_N^{1/2} \circ])\partial_{x_j}\dot\bU_N\\
			=-\S_N\left(\sum_{j=1}^d [\Lambda^{s'},A_j^1(\bU_N)]\partial_{x_j}\bU_N\right)-\S_N^{1/2}\left(\sum_{j=1}^d [\S_N^{1/2},A_j^1(\bU_N)]\partial_{x_j}\dot\bU_N\right).	
		\end{multline}
		\sloppy{Using that \(\S_N:L^2\to L^2\) is bounded, as well as the product, composition and commutator estimates ---Propositions~\ref{prop.product_estimates},~\ref{prop.composition_estimates},~\ref{prop.commutator_s} and~\ref{prop.commutator_0} --- the terms on the right-hand side can be estimated in \(L^2((2\pi\TT)^d)^n\) as}
		\begin{equation*}
			\left|{\rm RHS} \right|_{L^2} \leq C(\left|\bU_N \right|_{H^{s}})\left|\bU_N \right|_{H^{s}}\left|\dot\bU_N \right|_{L^2}.
		\end{equation*}		
		By assumption, \(A_j(\bU)=A_j^0 + A_j(\bU)\) is a symmetric matrix for all $\bU\in\RR^n$ and \(j\in \{1,\ldots,d\}\). In particular, this implies that \(A_j^0=A_j(\bz)\) and hence \(A_j^1(\bU)\) are both symmetric matrices, which in turn implies that the operator \(A_j^0 + S_N^{1/2}A_j^1(\bU_N)\S_N^{1/2}\) is symmetric for the $L^2((2\pi\TT)^d)^n$ inner-product for all \(j\in \{1,\ldots,d\}\). Arguing in the usual manner using integration by parts, we have 
		\begin{align*}
			\left( \sum_{j=1}^d (A_j^0 + \S_N^{1/2} A_j^1(\bU_N)[\S_N^{1/2} \circ])\partial_{x_j}\dot\bU_N, \dot \bU_N \right)_{L^2} = - \frac{1}{2}\sum_{j=1}^d\left( [\partial_{x_j}, \S_N^{1/2} A_j^1(\bU_N) \S_N^{1/2}]\dot \bU_N, \dot \bU_N\right)_{L^2} \\
			\leq C(\left|\bU_N \right|_{H^{s}})\left|\bU_N \right|_{H^{s}}\left|\dot \bU_N \right|_{L^2}^2,
		\end{align*}
		where we used the continuous Sobolev embedding $H^s((2\pi\TT)^d)^n\subset W^{1,\infty}((2\pi\TT)^d)^n$ (Proposition~\ref{prop.embedding}) and that $\S_N^{1/2}:L^2\to L^2$ is bounded.
		Testing the identity~\eqref{eq.est3} against \(\dot \bU_N\), using the Cauchy--Schwarz inequality on the right-hand side and inserting the two estimates above yields the desired inequality~\eqref{eq.sym-smoothapriori}. 
		With this estimate in hand, the rest of the proof follows exactly the proof of Proposition~\ref{prop.sym_sharp_num_bound}.
		
		Considering now the system~\eqref{eq.sym-smooth0}, we notice that the inequality~\eqref{eq.sym-smoothapriori} is obtained as above, using additionally that $\S_NA_j^0=\S_N^{1/2}A_j^0 \S_N^{1/2}$ is self-adjoint, and the result follows.
 	\end{proof}
	
	The spectral convergence of solutions \(\bU_N\) to the semi-discrete problems towards the corresponding solution to the continuous problem~\eqref{eq.hyp} as \(N\to\infty\) is the identical in the case of sharp or smooth low-pass filters.
	\begin{Proposition}[Convergence]\label{prop.sym-smoothdifference}
				The statement of Proposition~\ref{prop.sym-sharpdifference} holds replacing~\eqref{eq.sym-sharp} with~\eqref{eq.sym-smooth0} or~\eqref{eq.sym-smooth}.
	\end{Proposition}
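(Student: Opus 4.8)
The plan is to mimic the proof of Proposition~\ref{prop.sym-sharpdifference} line by line, the only genuine change being the treatment of the truncation error introduced by the smooth filter. As there, I would first assume $\bU^0\in H^{s+1}((2\pi\TT)^d)^n$ (so that $\bU,\bU_N\in\cC^1$ in time with values in $H^s$ and the energy computations are licit), recovering the general case $\bU^0\in H^s$ by regularizing the data and passing to the limit; I would establish the estimate on $[0,T/\norm{\bU^0}_{H^s}]$ first, and then reach an arbitrary compact $I^*\subset I$ by the same continuity argument, now built on Proposition~\ref{prop.sym-smoothnum-bound}. Setting $\bD_N\coloneq\bU-\bU_N$, a direct computation shows that for both~\eqref{eq.sym-smooth0} and~\eqref{eq.sym-smooth} one has
\[\partial_t\bD_N+\sum_{j=1}^d A_j(\bU_N)\partial_{x_j}\bD_N+\sum_{j=1}^d\big(A_j(\bU)-A_j(\bU_N)\big)\partial_{x_j}\bU=-(\Id-\S_N)\bW,\]
where $\bW=\sum_j A_j(\bU_N)\partial_{x_j}\bU_N$ for~\eqref{eq.sym-smooth0} and $\bW=\sum_j A_j^1(\bU_N)\partial_{x_j}\bU_N$ for~\eqref{eq.sym-smooth}. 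The left-hand side has exactly the structure of the sharp-filter case, so its three contributions are bounded verbatim (self-adjointness of $A_j(\bU_N)$ and integration by parts for the principal term, Propositions~\ref{prop.commutator_0} and~\ref{prop.composition_estimates} for the commutator term, and Proposition~\ref{prop.composition_estimates} for the lower-order term).

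The decisive point is the following. In the sharp-filter proof the truncation error disappears after testing because the energy filter $\S_N$ annihilates it, $\S_N(\Id-\P_N)=0$; here $\S_N$ does not annihilate $\Id-\S_N$. The remedy is to run the energy estimate with the coarser filter $\S_{N/2}$ in place of $\S_N$: for the product filter under consideration, $S_{N/2}(\bk)\neq0$ forces $\max_j|k_j|<N/2$, whence $S_N(\bk)=1$, so that $\S_{N/2}(\Id-\S_N)=0$, and likewise $\S_{N/2}^{1/2}(\Id-\P_N)=0$, so that the initial error $\S_{N/2}^{1/2}\bD_N\big\vert_{t=0}=\S_{N/2}^{1/2}(\Id-\P_N)\bU^0$ vanishes. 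Crucially, $\S_{N/2}$ is a smooth filter of the same family (namely $\S_M$ at scale $M=N/2$), so the uniform commutator estimate of Proposition~\ref{prop.commutator_0} still applies to $\S_{N/2}^{1/2}$ with a constant independent of $N$. Applying $\S_{N/2}$ to the difference equation and testing against $\bD_N$, the right-hand side now contributes $-(\S_{N/2}(\Id-\S_N)\bW,\bD_N)_{L^2}=0$, and one obtains precisely the differential inequality of Proposition~\ref{prop.sym-sharpdifference} for $\norm{\S_{N/2}^{1/2}\bD_N}_{L^2}$. Since $\norm{\bD_N}_{L^2}\leq\norm{\S_{N/2}^{1/2}\bD_N}_{L^2}+\langle N/4\rangle^{-s}\norm{\bD_N}_{H^s}$ and $\norm{\bD_N}_{H^s}\leq 4\norm{\bU^0}_{H^s}$ by the triangle inequality, Grönwall's lemma gives $\norm{\bD_N(t)}_{L^2}\lesssim N^{-s}\norm{\bU^0}_{H^s}$ with a constant depending only on $s$ and $M$; the cases $0\leq r\leq s$ and general $I^*$ then follow exactly as in Proposition~\ref{prop.sym-sharpdifference}.

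The main obstacle is exactly this truncation error, and the reason the naive approaches fail is instructive. Estimating $\norm{(\Id-\S_N)\bW}_{L^2}\leq\langle N/2\rangle^{-(s-1)}\norm{\bW}_{H^{s-1}}$ loses one derivative and yields only $N^{-(s-1)}$ with a constant controlled by $\norm{\bU^0}_{H^s}$; upgrading to $N^{-s}$ this way would require $\bW\in H^s$, i.e.\ a constant depending on $\norm{\bU^0}_{H^{s+1}}$, which is fatal for the limiting argument. Transferring the full filter onto $\bD_N$ via self-adjointness does give the factor $N^{-s}$, but only as a pure source in the $L^2$-energy, which after Grönwall degrades the rate to $N^{-s/2}$. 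Recognizing that the coarser energy filter $\S_{N/2}$ cancels the truncation error identically is what recovers the sharp $N^{-s}$ rate while keeping all constants dependent on $\norm{\bU^0}_{H^s}$ alone.
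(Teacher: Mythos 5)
Your proposal is correct and is essentially identical to the paper's own proof: the decisive idea in both is to run the energy estimate with the coarser filter $\S_{N/2}$, exploiting $\S_{N/2}(\Id-\S_N)=0$ (so the truncation error and the initial error vanish after filtering), the fact that $\S_{N/2}^{1/2}$ satisfies the commutator estimate of Proposition~\ref{prop.commutator_0} uniformly in $N$, and the bound $\Norm{\Id-\S_{N/2}^{1/2}}_{H^s\to L^2}\leq \langle N/4\rangle^{-s}$, after which the argument of Proposition~\ref{prop.sym-sharpdifference} applies verbatim. Your closing discussion of why the naive treatments of the truncation term lose either a derivative or half the rate is accurate, though not needed for the proof itself.
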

	\begin{proof}
		The proof follows the proof of Proposition~\ref{prop.sym-sharpdifference}, with one modification. Consider \(\bU_N \in \cC(I_N; H^s((2\pi\TT)^d)^n)\) solution to~\eqref{eq.sym-smooth} (considering instead the solution to~\eqref{eq.sym-smooth0} amounts to replacing $A_j^1(\bU_N)$ with $A_j(\bU_N)$ in the right-hand side of the following identity, with no consequence).
		Then the difference $\bD_N\coloneq\bU-\bU_N$ satisfies
		\[\partial_t \bD_N + \sum_{j=1}^d A_j(\bU_N)\partial_{x_j}\bD_N+\sum_{j=1}^d (A_j(\bU)-A_j(\bU_N))\partial_{x_j}\bU=(\Id-\S_N)\left(\sum_{j=1}^d A_j^1(\bU_N)\partial_{x_j}\bU_N\right).\]
		Instead of applying \(\S_N\) as in the proof of Proposition~\ref{prop.sym-sharpdifference}, we apply \(\S_{N/2}\), noting that \(\S_{N/2}(\Id - \S_N) = 0\). Then we have
		\begin{multline*}\partial_t \S_{N/2}\bD_N + \S_{N/2}^{1/2}\left(\sum_{j=1}^d A_j(\bU_N)\partial_{x_j}\S_{N/2}^{1/2}\bD_N\right)+ \S_{N/2}^{1/2}\left(\sum_{j=1}^d[\S_{N/2}^{1/2}, A_j(\bU_N)]\partial_{x_j}\bD_N\right)\\
			+\S_{N/2}\left(\sum_{j=1}^d (A_j(\bU)-A_j(\bU_N))\partial_{x_j}\bU\right)=\bz.
		\end{multline*}
		Because \(\S_{N/2}\) satisfies the same commutator estimates as \(\S_N\) and $\|\Id-\S_{N/2}^{1/2}\|_{H^s\to L^2}\leq \langle N/4\rangle^{-s}$, we may then proceed exactly as in the proof of Proposition~\ref{prop.sym-sharpdifference}.
	\end{proof}

	\section{Symmetrizable quasilinear systems}
	\label{sec.symable}

	In this section, we consider systems that are symmetrizable in the sense of Friedrichs. That is, we assume that there is an open set \(\cU\subset \RR^{n}\) containing the origin and an operator \(S(\bU)\) which is a symmetrizer for the system~\eqref{eq.hyp}:
	
	\begin{Assumption}{A.2}\label{assump.S1}
	There exists \(S(\cdot)\) a Friedrichs-symmetrizer for the system~\eqref{eq.hyp}, that is, there exists an open set \({\cU \subset \RR^{n}}\) with \(\bz\in\cU\) 
		such that for all $\bU \in\cU$,  $S(\bU)$ is real-valued, symmetric positive definite, and for all \(j\in \{1, \ldots, d\}\), \(S(\bU)A_j(\bU)\) is symmetric.
				{We assume that all entries of $S(\cdot)$ are polynomial.}
	\end{Assumption}
	\begin{Remark}
		As we assumed in Assumption~\ref{assump.A1} that entries of $A_j(\bU)$ are polynomial for all \(j\in \{1, \ldots, d\}\), then if Friedrichs-symmetrizers exist it is always possible to select one whose entries are polynomial. Indeed, the assumptions that $S(\cdot)$ is real-valued and symmetric and that for all \(j\in \{1, \ldots, d\}\), \(S(\cdot)A_j(\cdot)\) is symmetric constitutes a system of linear equations for entries of $S$. By considering the corresponding matrix in the field of real rational fractions (since entries of $A_j(\bU)$ are real polynomials) and performing Gaussian elimination, we see that the system of linear equations can be solved for (non-identically zero) smooth functions if and only if it can be solved for rational fractions, which then can be chosen polynomials after multiplication of all entries by a common multiple of all denominators. Notice however that the domain of hyperbolicity defined as the open set on which $S(\cdot)$ is positive definite depends on the choice of the symmetrizer.
	\end{Remark}

	We have the following standard result~\cite{Benzoni-GavageSerre07}.
	\begin{Proposition}[Well-posedness]\label{prop.symable-hyp}
		Suppose that the system~\eqref{eq.hyp} satisfies Assumptions~\ref{assump.A1} and~\ref{assump.S1}. Let $s>1+d/2$, $M>0$ and $\cK\subset \cU$ compact.  There exists $C>0$ and $T>0$ (depending only on $s, M$ and \(\cK\)) such that for every $\bU^0\in H^s((2\pi\TT)^d)^n$ such that $\norm{\bU^0}_{H^s}\leq M$ and taking values in \(\cK\), there exists a unique $\bU\in\cC(I;H^s((2\pi\TT)^d)^n)$ maximal-in-time classical solution to~\eqref{eq.hyp} and $\bU\big\vert_{t=0}=\bU^0$, and moreover the open time interval $I\supset[0,T/\norm{\bU^0}_{H^s}]$ and for all $t\in[0,T/\norm{\bU^0}_{H^s}]$,
		\[\norm{\bU}_{H^s}\leq C \norm{\bU^0}_{H^s} \exp(C \norm{\bU^0}_{H^s} t).\]
	\end{Proposition}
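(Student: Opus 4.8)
The plan is to establish well-posedness via the standard energy method for Friedrichs-symmetrizable systems, using the symmetrizer $S(\bU)$ to build an equivalent energy norm. First I would recall that, since all entries of $S(\cdot)$ are polynomial (Assumption~\ref{assump.S1}) and $\cK\subset\cU$ is compact with $S(\bU)$ symmetric positive definite on $\cU$, there exist constants $0<c_0\leq C_0$ such that $c_0\Id\leq S(\bU)\leq C_0\Id$ for all $\bU\in\cK$. Because the system~\eqref{eq.hyp} is a quasilinear symmetric-hyperbolic system once multiplied on the left by $S(\bU)$, local existence and uniqueness of a maximal classical solution $\bU\in\cC(I;H^s((2\pi\TT)^d)^n)$ follows from the classical theory (invoking Propositions~\ref{prop.product_estimates} and~\ref{prop.composition_estimates} for the nonlinear estimates); the reference~\cite{Benzoni-GavageSerre07} supplies this directly, so the substance of the proof is the quantitative energy estimate controlling $\norm{\bU}_{H^s}$ on a time interval of length $\sim T/\norm{\bU^0}_{H^s}$.

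The key step is the a priori estimate. I would introduce the $\bU$-dependent energy functional
\begin{equation*}
	E_{s}(\bU)^2 \coloneq \sum_{|\alpha|\leq s}\big(S(\bU)\Lambda^{s}\bU,\Lambda^{s}\bU\big)_{L^2},
\end{equation*}
which, by the two-sided bound on $S(\bU)$, is equivalent to $\norm{\bU}_{H^s}^2$ uniformly for $\bU$ taking values in $\cK$ (more precisely, as long as the solution remains valued in a slightly enlarged compact $\cK'\subset\cU$, which it does for short times by continuity). Differentiating $E_s(\bU)^2$ in time, applying $\Lambda^{s}$ to~\eqref{eq.hyp}, and testing against $S(\bU)\Lambda^{s}\bU$, the leading-order term $\sum_j\big(S(\bU)A_j(\bU)\partial_{x_j}\Lambda^{s}\bU,\Lambda^{s}\bU\big)_{L^2}$ is handled by integration by parts: since $S(\bU)A_j(\bU)$ is symmetric, this produces a commutator $[\partial_{x_j},S(\bU)A_j(\bU)]$ which is bounded in $L^\infty$ by $C(\norm{\bU}_{H^s})\norm{\bU}_{H^s}$ via the Sobolev embedding $H^s\subset W^{1,\infty}$ (Proposition~\ref{prop.embedding}). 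The commutator terms $[\Lambda^{s},A_j(\bU)]\partial_{x_j}\bU$ are estimated using Propositions~\ref{prop.commutator_s} and~\ref{prop.composition_estimates}, and the term coming from $\partial_t S(\bU)$ acting inside the energy is controlled by $C(\norm{\bU}_{H^s})\norm{\bU}_{H^s}E_s(\bU)^2$ after substituting $\partial_t\bU$ from the equation. This yields
\begin{equation*}
	\frac{\dd}{\dd t}E_s(\bU)^2 \leq C(\norm{\bU}_{H^s})\,\norm{\bU}_{H^s}\,E_s(\bU)^2.
\end{equation*}

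From here I would close the argument with a continuity/bootstrap argument analogous to the proof of Proposition~\ref{prop.sym_sharp_num_bound}. Using the norm equivalence, the differential inequality translates into $\frac{\dd}{\dd t}\norm{\bU}_{H^s}\leq C(\norm{\bU}_{H^s})\norm{\bU}_{H^s}^2$; fixing $M$ and restricting to $\norm{\bU}_{H^s}\leq 2C_1 M$ (where $C_1$ is the norm-equivalence constant) freezes the argument of $C(\cdot)$, giving a genuine Grönwall estimate and a lifespan bound $T/\norm{\bU^0}_{H^s}$ with $T$ depending only on $s$, $M$, and $\cK$. The main obstacle, and the point requiring genuine care, is ensuring that the solution stays valued in a compact subset of $\cU$ throughout this time interval so that the symmetrizer bounds remain valid: this must be incorporated into the bootstrap, tracking simultaneously the $H^s$-norm bound and the pointwise constraint $\bU(t,\cdot)\in\cK'\Subset\cU$ (the latter following from the $L^\infty$-norm control via the embedding once $\bU^0$ takes values in the interior $\cK\subset\cU$), and shrinking $T$ if necessary so both conditions persist. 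The extra factor $C$ in front of $\norm{\bU^0}_{H^s}$ in the statement, absent from the symmetric case, is precisely the price of passing through the non-trivial symmetrizer via the norm equivalence $c_0^{1/2}\norm{\bU}_{H^s}\lesssim E_s(\bU)\lesssim C_0^{1/2}\norm{\bU}_{H^s}$.
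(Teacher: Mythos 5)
The paper offers no proof of this proposition at all---it is stated as a standard result with a citation to~\cite{Benzoni-GavageSerre07}---and your argument is precisely the standard symmetrizer-based energy method that citation refers to, matching in every essential the argument the paper itself writes out for the semi-discrete analogue (Proposition~\ref{prop.symable_smooth_num_bound}): uniform coercivity of $S(\bU)$ on compact subsets of $\cU$, the modified energy $\big(S(\bU)\Lambda^s\bU,\Lambda^s\bU\big)_{L^2}$, integration by parts exploiting symmetry of $S(\bU)A_j(\bU)$, commutator/composition estimates (Propositions~\ref{prop.commutator_s} and~\ref{prop.composition_estimates}), control of the $[\partial_t,S(\bU)]$ term through the equation, and a continuity argument tracking simultaneously the $H^s$ bound and the pointwise constraint $\bU(t,\cdot)\in\cK'\Subset\cU$. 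Your proof is correct; the only blemishes are cosmetic: the spurious sum $\sum_{|\alpha|\leq s}$ in the definition of $E_s$ (the summand does not depend on $\alpha$), and the bootstrap threshold should be phrased as $2C_1\norm{\bU^0}_{H^s}$ rather than $2C_1M$ so that Gr\"onwall yields the stated lifespan $T/\norm{\bU^0}_{H^s}$ rather than the weaker $T/M$.
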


	\subsection{Discretization with smooth low-pass filters}
	\label{sec.symable.smooth}
		
	We consider the spatial discretization with smooth low-pass filters first, as it is more similar to the previous section. 
	Recall that the spatial discretization with the smooth low-pass filter \(\S_N\) amounts to solving
	\begin{equation}\label{eq.symable-smooth0}
		\partial_t \bU_N+\S_N\left(\sum_{j=1}^d A_j(\bU_N)\partial_{x_j}\bU_N\right)=\bz, \quad \bU_N\vert_{t=0} = \P_N\bU^0.
	\end{equation}
	As discussed in the introduction, one would typically prefer in practice the variant 
	\begin{equation}\label{eq.symable-smooth}
		\partial_t \bU_N+\sum_{j=1}^d (A_j^0 + \S_N (A_j^1(\bU_N)[\circ]))\partial_{x_j}\bU_N=\bz, \quad \bU_N\vert_{t=0} = \P_N \bU^0,
	\end{equation}
	where \(A_j^0\coloneq A_j(\bz)\) and \(A_j^1(\cdot) \coloneq A_j(\cdot)- A_j^0 \). 
	However, we face a difficulty that while any symmetrizer for the system~\eqref{eq.hyp},	\(S(\cdot)\), readily provides a suitable symmetrizer of the  semi-discretized system~\eqref{eq.symable-smooth0}, such is not the case for system~\eqref{eq.symable-smooth}, and additional assumptions are needed. 
	
	\begin{Assumption}{A.3}[Compatibility of the symmetrizer]\label{assump.AS}
		Supposing the Assumptions~\ref{assump.A1} and~\ref{assump.S1} hold, and decomposing 
		\(A_j(\cdot) =  A_j^0+A_j^1(\cdot)\) where $A_j^0\coloneq A_j(\bz)$ and $S(\cdot) =  S^0+S^1(\cdot)$ where $S^0\coloneq S(\bz)$, 
		we have 
		\[\forall j\in\{1, \ldots, d\}, \ \forall \bU\in\cU, \quad S^0A_j^0, \ S^0A_j^1(\bU)+S^1(\bU)A_j^0 \text{ and } S^1(\bU)A_j^1(\bU) \text{ are symmetric.}\]
	\end{Assumption}
	\begin{Remark}\label{rem.AS-linear}
		As can be seen by Taylor-expanding $S(\bU)A_j(\bU)$ about the origin, Assumption~\ref{assump.AS} holds in particular when $A_j(\cdot)$ and $S(\cdot)$ are linear, that is entries of $A_j^1(\cdot)$ and $S^1(\cdot)$ are homogeneous polynomials of degree 1; see also Remark~\ref{rem.general1}.
	\end{Remark}
	
	We have the following bound on solutions \(\bU_N\) to the semi-discretized problems~\eqref{eq.symable-smooth0} or~\eqref{eq.symable-smooth}.
	\begin{Proposition}[Uniform estimates]\label{prop.symable_smooth_num_bound}
		Suppose that the system~\eqref{eq.hyp} satisfies Assumptions~\ref{assump.A1} and~\ref{assump.S1}. Let ${s>d/2+1}$, $M>0$ and $\cK\subset \cU$ compact. There exists $C>0$ and $T>0$ (depending only on \(s,M,\) and \(\cK\)) such that for every $N\in\NN$ and for every $\bU^0\in H^{s}((2\pi\TT)^d)^n$ such that $\norm{\bU^0}_{H^{s}}\leq M$ and taking values in $\cK$, there exists a unique $\bU_N\in\cC(I_N;H^s((2\pi\TT)^d)^n)$ maximal-in-time classical solution to~\eqref{eq.symable-smooth0} and $\bU\big\vert_{t=0}=\P_N\bU^0$. The open time interval $I_N\supset[0,T/\norm{\bU^0}_{H^{s}}]$ and for all $0\leq t\leq T/\norm{\bU^0}_{H^{s}}$,
		\begin{equation*}
			\norm{\bU_N}_{H^{s}}\leq C\norm{\bU^0}_{H^{s}} \exp(C \norm{\bU^0}_{H^{s}} t) \leq 2 C \norm{\bU^0}_{H^{s}}.
		\end{equation*}
		
		Moreover, for any \(s'\geq s\), one has $\bU_N\in\cC^1(I_N;H^{s'}((2\pi\TT)^d)^n)$ and for any \(0<T^*\in I_N\),  \(M^*>0\)  and $\cK^*\subset\cU$ compact such that  \(\sup_{t\in [0,T^*]}\left|\bU_N(t,\cdot) \right|_{H^{s}} \leq M^*\) and $\bU_N([0,T^*]\times\RR^d)\subset\cK^*$ there exists $C^*>0$ depending only on \(s\), \(s'\), \(M^*\) and $\cK^*$ such that for all \(0\leq t\leq T^*\),
		\[\left| \bU_N \right|_{H^{s'}}\leq C^*\left| \bU^0 \right|_{H^{s'}} \exp(C^* M^* t).\]
		
		The same results holds replacing~\eqref{eq.symable-smooth0} with~\eqref{eq.symable-smooth} if additionally Assumption~\ref{assump.AS} holds.
	\end{Proposition}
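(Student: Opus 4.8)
The plan is to reproduce the scheme of Proposition~\ref{prop.sym_sharp_num_bound}: prove an a priori bound of the form $\frac{\dd}{\dd t}|\bU_N|_{H^{s'}}\leq C(|\bU_N|_{H^s},\cK^*)\,|\bU_N|_{H^s}\,|\bU_N|_{H^{s'}}$, valid as long as $\bU_N$ takes values in a compact $\cK^*\subset\cU$, integrate it by Grönwall, and then run the continuity argument to turn this into the uniform statement. Relative to the symmetric case there are two new ingredients. First, the energy must be weighted by the Friedrichs symmetrizer rather than being the plain $\dot H^{s'}$ seminorm. Second, the continuity set $J_N$ must now enforce, on top of the bound $|\bU_N|_{H^s}\leq 2C|\bU^0|_{H^s}$, that $\bU_N$ stays inside a fixed compact subset of $\cU$; this is propagated from $\P_N\bU^0$ through the embedding $H^s\subset L^\infty$ (Proposition~\ref{prop.embedding}) and the short-time control $|\partial_t\bU_N|_{L^\infty}\leq C(|\bU_N|_{H^s})|\bU_N|_{H^s}$ coming from the equation, and it is what forces $T$ to depend on $\cK$ (through $\mathrm{dist}(\cK,\partial\cU)$), exactly as in Proposition~\ref{prop.symable-hyp}.

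For~\eqref{eq.symable-smooth0} I would use $E_{s'}:=(S(\bU_N)\dot\bU_N,\dot\bU_N)_{L^2}$ with $\dot\bU_N:=\Lambda^{s'}\bU_N$. As in Proposition~\ref{prop.sym-smoothnum-bound}, applying $\Id-\P_N$ and using $(\Id-\P_N)\S_N=0$ gives $\bU_N=\P_N\bU_N$, hence $\bU_N\in\cC^1(I_N;H^\sigma)$ for all $\sigma$. Since $S(\bU_N)$ is symmetric and uniformly positive on $\cK^*$, one has $E_{s'}\simeq|\dot\bU_N|_{L^2}^2=|\bU_N|_{H^{s'}}^2$ with constants depending on $\cK^*$, which is the origin of the multiplicative constant $C$ (resp. $C^*$) appearing in the statement. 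Differentiating, the $\partial_t S(\bU_N)$ term is harmless. For the principal term, commuting $\Lambda^{s'}$ through the Fourier multiplier $\S_N$, writing $\S_N=\S_N^{1/2}\S_N^{1/2}$ and transferring one factor with the gain-of-regularity commutator estimate of Proposition~\ref{prop.commutator_0}, the leading contribution reduces—modulo $L^2$-bounded remainders handled by Propositions~\ref{prop.product_estimates}, \ref{prop.composition_estimates}, \ref{prop.commutator_s} and~\ref{prop.commutator_0}—to $-2\sum_j(S(\bU_N)A_j(\bU_N)\partial_{x_j}\dot\bW_N,\dot\bW_N)_{L^2}$ with $\dot\bW_N:=\S_N^{1/2}\dot\bU_N$. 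Here Assumption~\ref{assump.S1} gives that $S(\bU_N)A_j(\bU_N)$ is symmetric, so integration by parts replaces it by the order-zero commutator $[\partial_{x_j},S(\bU_N)A_j(\bU_N)]$; together with $|\dot\bW_N|_{L^2}\leq|\dot\bU_N|_{L^2}$ this closes the differential inequality.

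The variant~\eqref{eq.symable-smooth} is the crux, and the naive choice $E_{s'}=(S(\bU_N)\dot\bU_N,\dot\bU_N)$ fails: because the filter multiplies only $A_j^1$, the symbol of the evolution operator $\mathcal L:=\sum_j(A_j^0+\S_N A_j^1(\bU_N))\partial_{x_j}$ in the $S$-metric is $(S^0+\S_N(\cdot)S^1)(A_j^0+\S_N(\cdot)A_j^1)\,\i k_j$, whose antisymmetric part is $(1-\S_N(\cdot))$ times the antisymmetric part of $S^1A_j^0$—a first-order contribution localized at frequencies $\gtrsim N$ that is not uniformly controllable. The remedy, and the place where Assumption~\ref{assump.AS} is used, is the filter-compatible symmetrizer
\[\mathcal S_N:=S^0+\S_N^{1/2}\,S^1(\bU_N)\,\S_N^{1/2},\qquad E_{s'}:=(\mathcal S_N\dot\bU_N,\dot\bU_N)_{L^2}.\]
As $S^1(\bU_N)$ is symmetric, $\mathcal S_N$ is self-adjoint, and from $\mathcal S_N=S^0(\Id-\S_N)+\S_N^{1/2}S(\bU_N)\S_N^{1/2}$ together with the positivity of $S^0=S(\bz)$ and of $S(\bU_N)$ on $\cK^*$ one gets $E_{s'}\geq c\,|\dot\bU_N|_{L^2}^2$, hence the required norm equivalence. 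The leading symbol of $\mathcal S_N$ is $S^0+\S_N(\cdot)S^1$, so the leading symbol of $\mathcal S_N\mathcal L$ is $\big(S^0A_j^0+\S_N(S^0A_j^1+S^1A_j^0)+\S_N^2\,S^1A_j^1\big)\,\i k_j$; Assumption~\ref{assump.AS} makes each of the three bracketed matrices symmetric, so the whole symbol is anti-Hermitian and the principal part of $\mathcal S_N\mathcal L+\mathcal L^\ast\mathcal S_N$ cancels.

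All the leftover terms—those carrying a derivative of the coefficients and those produced when commuting $\S_N^{1/2}$ past $S^1(\bU_N)$ and $A_j^1(\bU_N)$—gain a derivative through Proposition~\ref{prop.commutator_0} (and are estimated with Propositions~\ref{prop.product_estimates} and~\ref{prop.composition_estimates}), so they are $O(|\bU_N|_{H^s}\,|\dot\bU_N|_{L^2}^2)$ and the inequality $\frac{\dd}{\dd t}E_{s'}\leq C(|\bU_N|_{H^s},\cK^*)|\bU_N|_{H^s}E_{s'}$ results. Given this a priori estimate for both~\eqref{eq.symable-smooth0} and~\eqref{eq.symable-smooth}, the second assertion of the proposition is immediate from Grönwall's inequality and the norm equivalence, while the first assertion follows from the continuity argument of Proposition~\ref{prop.sym_sharp_num_bound}, amended as in the first paragraph to keep $\bU_N$ valued in a compact subset of $\cU$. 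The main obstacle throughout is precisely the asymmetry introduced by filtering only the nonlinear flux in~\eqref{eq.symable-smooth}; the filtered symmetrizer $\mathcal S_N$, together with the homogeneous symmetry conditions of Assumption~\ref{assump.AS}, is what reconciles it with the energy method.
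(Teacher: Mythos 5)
Your proposal is correct and follows essentially the same route as the paper: your filter-compatible symmetrizer $\mathcal S_N=S^0+\S_N^{1/2}S^1(\bU_N)\S_N^{1/2}$ is exactly the paper's $\tilde S(\bU)$ in~\eqref{eq.def-tilde-S}, your coercivity identity via $S^0(\Id-\S_N)+\S_N^{1/2}S(\bU_N)\S_N^{1/2}$ matches~\eqref{eq.S-coercive}, and your splitting into a symmetric principal part (using Assumption~\ref{assump.AS}) plus commutator remainders controlled by Proposition~\ref{prop.commutator_0} is the paper's ${\rm Sym}+{\rm Com}$ decomposition, followed by the same Gr\"onwall-plus-continuity argument with $L^\infty$ tracking of the compact set $\cK^*$. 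The only differences are presentational: you phrase the key cancellation at the symbol level rather than as an operator identity, and you make explicit why the unmodified symmetrizer fails (the $(1-\S_N)$残-term carrying the antisymmetric part of $S^1A_j^0$), which the paper only discusses indirectly.
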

	\begin{proof}
		As in Propositions~\ref{prop.sym_sharp_num_bound} and~\ref{prop.sym-smoothnum-bound}, the key ingredient is an apriori estimate, and we focus on the derivation of such estimates for solutions \(\bU_N \in \cC^1(I_N; H^\sigma((2\pi\TT)^d)^n)\) for all $\sigma\in\RR$ (recall $\bU_N=\P_N\bU_N$ since $\S_N=\S_N\P_N$). We consider system~\eqref{eq.symable-smooth} which is the most involved, the case of system~\eqref{eq.symable-smooth0} being obtained in the same way. Indeed, while the symmetrizer of the system~\eqref{eq.hyp}, $S(\bU)$, readily offers a suitable symmetrizer of the semi-discretized system~\eqref{eq.symable-smooth0}, for~\eqref{eq.symable-smooth} we need to consider a modification, namely
		\begin{equation}\label{eq.def-tilde-S}
			\tilde{S}(\bU)[\circ] = S^0+\S_N^{1/2} (S^1(\bU) \S_{N}^{1/2}[\circ]).
		\end{equation}
		Recall ${\S_N=\Diag(S_N(D))}$ and the symbol $S_N$ is nonnegative, hence $\S_N^{1/2}=\Diag(S_N(D)^{1/2})$ is well-defined. Let us first prove that for any $\cK\subset \cU$ compact, there exists $0<\alpha\leq\beta<\infty$ such that for all $\bU\subset \cK$  and $\bV\in\RR^n$ one has
		\begin{equation}\label{eq.S-coercive}
			\alpha\left|\bV \right|_{L^2}^2 \leq \big(\tilde{S}(\bU)\bV, \bV\big)_{L^2} \leq \beta\left|\bV \right|_{L^2}^2.
		\end{equation}
		Note first that the result holds for $S(\bU)$ by Assumption~\ref{assump.S1}, and hence for \(S^0=S(\bz)\) since \(\bz \in \cU\). The result then follows from the identities
		\begin{align*}
			\big(\tilde{S}(\bU) \bV, \bV\big)_{L^2} &= (S^0 \bV, \bV)_{L^2} + \big(S^1(\bU) \S_N^{1/2} \bV, \S_N^{1/2} \bV\big)_{L^2}\\
			&= \big(S^0(\Id - \S_N)^{1/2}\bV, (\Id - \S_N)^{1/2}\bV\big)_{L^2} + \big(S(\bU)\S_N^{1/2}\bV, \S_N^{1/2} \bV\big)_{L^2}
			\end{align*}
		and 
		\[\left|\bV \right|_{L^2}^2 = \left|(\Id - \S_N)^{1/2}\bV \right|_{L^2}^2+\left|\S_N^{1/2}\bV \right|_{L^2}^2.\]
		Now we want to show that for any $s'\geq s>1+d/2$ and $\bU_N$ solution to~\eqref{eq.symable-smooth} taking values in $\cK\subset \cU$ compact one has
		\begin{equation}\label{eq.symable_tder_est}
			\frac{1}{2} \frac{\dd}{\dd t}\left(\tilde{S}(\bU_N) \Lambda^{s'} \bU_N, \Lambda^{s'} \bU_N \right)_{L^2} \leq C(\cK,\left|\bU_N \right|_{H^{s}})\left|\bU_N \right|_{H^{s}}\left|  \bU_N\right|_{H^{s'}}^2.
		\end{equation}
		Indeed, if~\eqref{eq.symable_tder_est} holds, applying~\eqref{eq.S-coercive} in~\eqref{eq.symable_tder_est} and using Grönwall's Lemma yields the following estimate
		\begin{align*}\alpha^{1/2}(\cK^*)\left|\bU_N(t,\cdot) \right|_{H^{s'}}&\leq \cF_{s'}(t)\leq \cF_{s'}(0)\exp\left( \alpha^{-1}(\cK^*)\int_0^t C(\cK^*,\left|\bU_N (\tau,\cdot)\right|_{H^{s}}) \left|\bU_N(\tau,\cdot) \right|_{H^{s}}\dd\tau\right)\\
			&\leq \beta^{1/2}(\cK^*)\left|\bU_N(0,\cdot) \right|_{H^{s'}}\exp\left( t\times \alpha^{-1}(\cK^*)  C(\cK^*;\cM_{s}(t))  \cM_{s}(t)\right)
		\end{align*}
		where \(\cK^*\) is compact with \(\cK \subset \cU^* \subset \cK^* \subset \cU\) and \(\cU^*\) open, and where we denote
		\[ \cF_{s'}(\tau)\coloneq\left.\left(\tilde{S}(\bU_N) \Lambda^{s'} \bU_N, \Lambda^{s'}\dot \bU_N \right)_{L^2}^{1/2}\right|_{t=\tau} \quad \text{ and \quad } \cM_{s}(t)\coloneq \sup_{\tau\in[0,t]}\left|\bU_N(\tau,\cdot) \right|_{H^{s}}.\]
		The estimate is valid as long as $\bU_N(\tau,\cdot)$ takes values in $\cK^*$ for all $\tau\in[0,t]$. This is ensured by 
		\[ \left|\bU_N(\tau,\cdot) -\bU_N^0 \right|_{L^\infty} \leq \int_0^t\left|\partial_t\bU_N(\tau,\cdot) \right|_{L^\infty}\dd\tau \leq t\times C(\cK^*;\cM_{s}(t))\cM_{s}(t) \]
		which follows from the system~\eqref{eq.symable-smooth} and continuous Sobolev embedding $H^s((2\pi\TT)^d)^n\subset W^{1,\infty}((2\pi\TT)^d)^n$ (Proposition~\ref{prop.embedding}). By means of these estimates, we can employ the continuity argument as in the proof of Proposition~\ref{prop.sym_sharp_num_bound} to conclude the proof.

		Let us now prove the estimate~\eqref{eq.symable_tder_est}.
		Apply $\Lambda^{s'}$ to the system~\eqref{eq.symable-smooth} and denote $\dot \bU_N \coloneq \Lambda^{s'} \bU_N$ to infer
		\[
			\partial_t \dot \bU_N + \sum_{j=1}^d  (A_j^0 + \S_N (A_j^1(\bU_N)[\circ]))\partial_{x_j}\dot\bU_N=-\S_N\left(\sum_{j=1}^d [\Lambda^{s'},A_j^1(\bU_N)]\partial_{x_j}\bU_N\right).
		\]
		Applying the operator $\tilde{S}(\bU_N)$ to the system and using the self-adjointness of \(\tilde{S}(\bU_N)\) we get
		\begin{align*}\label{eq.estimate_symable_lambda}
			\frac{1}{2} \frac{\dd}{\dd t}\left(\tilde{S}(\bU_N) \dot \bU_N, \dot \bU_N \right)_{L^2} =&\frac{1}{2}\left([\partial_t, \tilde{S}(\bU_N)] \dot \bU_N, \dot \bU_N \right)_{L^2}
			- \sum_{j=1}^d\left( \tilde{S}(\bU_N)  (A_j^0 + \S_N (A_j^1(\bU_N)[\circ]))\partial_{x_j}\dot\bU_N, \dot \bU_N \right)_{L^2}\\
			&	-\sum_{j=1}^d \left( \tilde{S}(\bU_N)\S_N\left([\Lambda^{s'},A_j^1(\bU_N)]\partial_{x_j}\dot\bU_N\right), \dot \bU_N  \right)_{L^2}.
		\end{align*}
		By using that \(\partial_t\) commutes with \(S^0\) and \(\S_N^{1/2}\) we get 
		\begin{equation*}
			\left|\left([\partial_t, \tilde{S}(\bU_N)] \dot \bU_N, \dot \bU_N \right)_{L^2} \right| = \left|\left([\partial_t, S^1(\bU_N)]\S_N^{1/2} \dot \bU_N,  \S_N^{1/2} \dot\bU_N \right)_{L^2}\right| \leq C\left(\cK,\left|\bU_N \right|_{L^\infty} \right)\left|\partial_t \bU_N \right|_{L^\infty} \left| \S_N^{1/2} \dot \bU_N\right|_{L^2}^2. 
		\end{equation*}
		It follows from the system~\eqref{eq.symable-smooth}, continuous Sobolev embedding $H^s((2\pi\TT)^d)^n\subset W^{1,\infty}((2\pi\TT)^d)^n$ (Proposition~\ref{prop.embedding}) and product and composition estimates (Propositions~\ref{prop.product_estimates} and~\ref{prop.composition_estimates}) that
		\begin{align*}
			\left|\left([\partial_t, \tilde{S}(\bU_N)] \dot \bU_N, \dot \bU_N \right)_{L^2} \right| \leq C\left(\cK,\left|\bU_N \right|_{H^{s}}\right)\left|\bU_N \right|_{H^{s}} \left|  \bU_N\right|_{H^{s'}}^2. 
		\end{align*}
		For the third term, we use the boundedness of the operators $\tilde{S}(\bU_N)$ and $\S_N$ and the commutator estimate in Proposition~\ref{prop.commutator_s} and find
		\begin{equation*}
			\left| \left( \tilde{S}(\bU_N)\S_N\left( [\Lambda^{s'},A_j^1(\bU_N)]\partial_{x_j}\bU_N\right), \dot \bU_N  \right)_{L^2}\right| \leq C\left(\cK,\left|\bU_N \right|_{H^{s}}\right)\left|\bU_N \right|_{H^{s}} \left|  \bU_N\right|_{H^{s'}}^2.
		\end{equation*}
		Estimating the remaining term requires more care, since \(\tilde{S}(\bU_N)\) is not a perfect symmetrizer for the system. Observe
		\begin{align*}
			\tilde{S}(\bU_N)(A^0_j + \S_N(A_j^1(\bU_N))) 
			=&  S^0A_j^0 + \S_N^{1/2}(S^1(\bU_N) A_j^0+S^0  A_j^1(\bU_N))\S_N^{1/2} + \S_N S^1(\bU_N)  A_j^1(\bU_N)\S_N\\
			+& \S_N^{1/2} S^0 [\S_N^{1/2}, A_j^1(\bU_N)] 
			 - \S_N^{1/2} [\S_N^{1/2}, S^1(\bU_N)] \S_N A_j^1(\bU_N)+ \S_N S^1(\bU_N) [\S_N, A_j^1(\bU_N)]\\
			 =& {\rm Sym}(\bU_N) + {\rm Com}(\bU_N).
		\end{align*}
		Notice that all terms in the first line ---the sum being denoted ${\rm Sym}(\bU_N)$--- are symmetric operators by Assumption~\ref{assump.AS},  while the remaining terms ---the sum being denoted ${\rm Com}(\bU_N)$--- involve commutators between \(\S_N^{1/2}\) and either \(S^1(\bU_N)\) or \(A_j^1(\bU_N)\). Integration by parts and symmetry considerations implies that 
		\begin{align*}
			\left| \big({\rm Sym}(\bU_N) \partial_{x_j} \dot \bU_N, \dot \bU_N\big)_{L^2} \right| \leq C(\cK,\left|\bU_N \right|_{H^{s}})\left|\bU_N \right|_{H^{s}}\left| \bU_N \right|_{H^{s'}}^2.
		\end{align*}
		For the other terms we use that \(\S_N^{1/2}\) satisfies the commutator estimate in Proposition~\ref{prop.commutator_0} to infer
		\begin{align*}
			\left| \big({\rm Com}(\bU_N) \partial_{x_j} \dot \bU_N, \dot \bU_N\big)_{L^2} \right| \leq C(\cK,\left|\bU_N \right|_{H^{s}})\left|\bU_N \right|_{H^{s}}\left| \bU_N \right|_{H^{s'}}^2.
		\end{align*}
		Combining all the estimates shows~\eqref{eq.symable_tder_est}, which concludes the proof.
	\end{proof}

	Having established uniform bounds for solutions \(\bU_N\) to the semi-discretized system~\eqref{eq.symable-smooth0} and~\eqref{eq.symable-smooth}, we now show the convergence towards corresponding solutions of the underlying continuous problem~\eqref{eq.hyp} as \(N\to \infty\). 

	\begin{Proposition}[Convergence]\label{prop.symable-difference}
		Suppose that the system~\eqref{eq.hyp} satisfies Assumptions~\ref{assump.A1} and~\ref{assump.S1}. Let $s>d/2+1$ and \(\bU^0\in H^s((2\pi\TT)^d)^n\) such that $\bU^0$ takes values into the hyperbolic domain $\cU$. Denote $\bU\in\cC(I;H^s((2\pi\TT)^d)^n)$ the maximal-in-time classical solution to~\eqref{eq.hyp}, and $\bU_N\in\cC(I_N;H^s((2\pi\TT)^d)^n)$ the maximal-in-time classical solution to~\eqref{eq.symable-smooth0}. 
		For every compact subset \(I^* \subset I\), there is an \(N_0\in\NN\) and $C^*>0$, depending only on $s$, $|I^*|$, $\sup_{t\in I^*} \left|\bU(t,\cdot)\right|_{H^s}$ and $\cK^*\subset \cU$ compact such that $\bU(I^*\times\RR^d)\subset \cK^*$ such that for all $N\geq N_0$, one has $I_N\supset I^*$ and for any $0\leq r\leq s$,
		\[\sup_{t\in I^*}\norm{(\bU-\bU_N)(t,\cdot)}_{H^r}\leq C^*\, M^*\, N^{r-s} .\]
		
				The same results holds replacing~\eqref{eq.symable-smooth0} with~\eqref{eq.symable-smooth} if additionally Assumption~\ref{assump.AS} holds.
	\end{Proposition}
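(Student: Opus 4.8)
The plan is to mirror Propositions~\ref{prop.sym-sharpdifference} and~\ref{prop.sym-smoothdifference}, replacing the flat $L^2$ energy by one weighted with the Friedrichs symmetrizer $S(\bU_N)$. I first record that for \emph{both}~\eqref{eq.symable-smooth0} and~\eqref{eq.symable-smooth} the difference $\bD_N\coloneq\bU-\bU_N$ satisfies the same equation
\[\partial_t \bD_N + \sum_{j=1}^d A_j(\bU_N)\partial_{x_j}\bD_N + \sum_{j=1}^d\big(A_j(\bU)-A_j(\bU_N)\big)\partial_{x_j}\bU = -(\Id-\S_N)\F_N,\]
where $\F_N=\sum_{j} A_j(\bU_N)\partial_{x_j}\bU_N$ for~\eqref{eq.symable-smooth0} and $\F_N=\sum_{j} A_j^1(\bU_N)\partial_{x_j}\bU_N$ for~\eqref{eq.symable-smooth} (the unfiltered $A_j^0$-terms recombine into the full matrix $A_j(\bU_N)$ on the left). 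Since the source carries the factor $(\Id-\S_N)$, I measure the difference through the half-filtered, symmetrizer-weighted energy $\cE\coloneq\big(S(\bU_N)\S_{N/2}^{1/2}\bD_N,\,\S_{N/2}^{1/2}\bD_N\big)_{L^2}$, which is coercive, $\alpha\,\norm{\S_{N/2}^{1/2}\bD_N}_{L^2}^2\leq\cE\leq\beta\,\norm{\S_{N/2}^{1/2}\bD_N}_{L^2}^2$, as long as $\bU_N$ is valued in a compact $\cK^*\subset\cU$ (Assumption~\ref{assump.S1}, as in~\eqref{eq.S-coercive}). Testing the equation against $\S_{N/2}^{1/2}S(\bU_N)\S_{N/2}^{1/2}\bD_N$ and using $\S_{N/2}^{1/2}(\Id-\S_N)=0$ (as in the proof of Proposition~\ref{prop.sym-smoothdifference}) annihilates the source with no loss of powers of $N$; the analogous identity $\S_{N/2}^{1/2}(\Id-\P_N)=0$ gives $\cE(0)=0$ since $\bD_N\vert_{t=0}=(\Id-\P_N)\bU^0$.

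The heart of the matter is the principal contribution $\sum_j\big(A_j(\bU_N)\partial_{x_j}\bD_N,\,\S_{N/2}^{1/2}S(\bU_N)\S_{N/2}^{1/2}\bD_N\big)_{L^2}$. I would shift one factor $\S_{N/2}^{1/2}$ to the left and commute it through $A_j(\bU_N)$, so that with $W\coloneq\S_{N/2}^{1/2}\bD_N$ it splits into a symmetric part $\sum_j\big(S(\bU_N)A_j(\bU_N)\partial_{x_j}W,\,W\big)_{L^2}$ and a commutator remainder $\sum_j\big([\S_{N/2}^{1/2},A_j(\bU_N)]\partial_{x_j}\bD_N,\,S(\bU_N)W\big)_{L^2}$. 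On the symmetric part I use that $S(\bU_N)A_j(\bU_N)$ is symmetric (Assumption~\ref{assump.S1}) and integrate by parts, so that only $\partial_{x_j}\big(S(\bU_N)A_j(\bU_N)\big)$ remains, bounded in $L^\infty$ by $\norm{\bU_N}_{H^{s_0}}$ through the composition estimate (Proposition~\ref{prop.composition_estimates}) and the embedding $H^{s_0}\subset W^{1,\infty}$ (Proposition~\ref{prop.embedding}), $s_0>d/2+1$; the remainder is handled by the uniform-in-$N$ commutator estimate of Proposition~\ref{prop.commutator_0}, which absorbs $\partial_{x_j}$ and leaves $\norm{\bD_N}_{L^2}\norm{\S_{N/2}^{1/2}\bD_N}_{L^2}$. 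The weight's time derivative $\tfrac12\big((\partial_t S(\bU_N))\S_{N/2}^{1/2}\bD_N,\,\S_{N/2}^{1/2}\bD_N\big)_{L^2}$ is controlled by $\norm{\partial_t\bU_N}_{L^\infty}\lesssim\norm{\bU_N}_{H^{s_0}}$ (the system and the embedding), and the quasilinear source $\sum_j(A_j(\bU)-A_j(\bU_N))\partial_{x_j}\bU$ by $\norm{\bD_N}_{L^2}\norm{\bU}_{H^{s_0}}$ via the composition estimate, exactly as in the symmetric case.

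Collecting these bounds, inserting $\norm{\bD_N}_{L^2}\leq\norm{\S_{N/2}^{1/2}\bD_N}_{L^2}+\langle N/4\rangle^{-s}\norm{\bD_N}_{H^s}$ and the boundedness of $\norm{\bD_N}_{H^s}\leq\norm{\bU}_{H^s}+\norm{\bU_N}_{H^s}$, and using coercivity to pass from $\cE$ to $\norm{\S_{N/2}^{1/2}\bD_N}_{L^2}$, I arrive at $\tfrac{\dd}{\dd t}\cE^{1/2}\lesssim C(\cK^*,\norm{\bU}_{H^{s_0}},\norm{\bU_N}_{H^{s_0}})\big(\norm{\bU}_{H^{s_0}}+\norm{\bU_N}_{H^{s_0}}\big)\big(\cE^{1/2}+N^{-s}\big)$. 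Grönwall's inequality with $\cE(0)=0$ yields $\norm{\S_{N/2}^{1/2}\bD_N}_{L^2}\lesssim N^{-s}$, hence $\norm{\bD_N}_{L^2}\lesssim N^{-s}$, and the general $0\leq r\leq s$ follows by interpolation (Proposition~\ref{prop.interpolation_inequality}) against the $H^s$-bound on $\bD_N$. To upgrade from short times to an arbitrary compact $I^*\subset I$ I run the continuity/bootstrap argument of the second part of Proposition~\ref{prop.sym-sharpdifference}: fixing $d/2+1<s_0<s$, I let $J_N$ be the set of times on which $\bU_N$ stays bounded in $H^{s_0}$ and $H^{s}$ and ---crucially--- valued in a fixed compact $\cK^*\subset\cU$, with the high-norm and $L^\infty$ control needed to keep $\bU_N$ inside $\cK^*$ supplied by the uniform estimates of Proposition~\ref{prop.symable_smooth_num_bound}; applying the difference estimate at level $r=s_0<s$ shows, for $N\geq N_0$, that $\bU_N$ can leave neither the $H^{s_0}$-ball nor, by the embedding $H^{s_0}\subset L^\infty$, the compact $\cK^*$, so $J_N\supset I^*\cap\RR^+$ and the claimed bound holds (negative times by time reversal).

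The genuinely delicate step is the principal term: the smooth filter $\S_{N/2}^{1/2}$ commutes neither with the state-dependent matrices $A_j(\bU_N)$ nor with the symmetrizer $S(\bU_N)$, and the whole estimate hinges on commuting it through to expose the symmetric product $S(\bU_N)A_j(\bU_N)$ while relegating the rest to commutators that, thanks to the uniform derivative gain of Proposition~\ref{prop.commutator_0}, cost nothing in $N$ ---this is precisely where the smooth filter is indispensable. Let me stress that the difference estimate itself uses only the symmetrizer of Assumption~\ref{assump.S1} and is literally identical for~\eqref{eq.symable-smooth0} and~\eqref{eq.symable-smooth}; Assumption~\ref{assump.AS} intervenes only indirectly, through the appeal to Proposition~\ref{prop.symable_smooth_num_bound} to bound $\bU_N$ in the case of~\eqref{eq.symable-smooth}.
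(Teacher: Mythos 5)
Your proposal is correct and follows essentially the same approach as the paper's proof: apply the half-filter $\S_{N/2}^{1/2}$ to the difference equation so that both the source term and the initial datum are annihilated, run an $S(\bU_N)$-weighted $L^2$ energy estimate using the symmetry of $S(\bU_N)A_j(\bU_N)$ from Assumption~\ref{assump.S1} together with the uniform commutator estimate of Proposition~\ref{prop.commutator_0}, then conclude via coercivity, Grönwall, interpolation, and the continuity bootstrap of Proposition~\ref{prop.sym-sharpdifference}. The only (harmless) deviation is that for system~\eqref{eq.symable-smooth} the paper formally reintroduces the modified symmetrizer $\tilde{S}$ of~\eqref{eq.def-tilde-S}, whereas you observe that the plain-$S$ difference estimate carries over verbatim once the filtered source vanishes, with Assumption~\ref{assump.AS} entering only through the uniform bounds of Proposition~\ref{prop.symable_smooth_num_bound}; both routes are valid.
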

	\begin{proof}
		The proof is similar to that of Propositions~\ref{prop.sym-sharpdifference} and~\ref{prop.sym-smoothdifference}, and we only sketch the main arguments. We consider the system~\eqref{eq.symable-smooth0}; system~\eqref{eq.symable-smooth} can be treated in a similar way after introducing the symmetrizer $\tilde{S}(\bU)$ defined in~\eqref{eq.def-tilde-S} as in the proof of Proposition~\ref{prop.symable_smooth_num_bound}. Denote $\bD_N\coloneq\bU-\bU_N$ and
		use that $(\Id-\S_N)\S_{N/2}^{1/2}=0$ to infer
		\begin{multline*}\partial_t \S_{N/2}^{1/2}\bD_N + \sum_{j=1}^d A_j(\bU_N)\partial_{x_j}\S_{N/2}^{1/2}\bD_N+ \sum_{j=1}^d[\S_{N/2}^{1/2}, A_j(\bU_N)]\partial_{x_j}\bD_N\\
				+\S_{N/2}^{1/2}\left(\sum_{j=1}^d (A_j(\bU)-A_j(\bU_N))\partial_{x_j}\bU\right)=\bz.
		\end{multline*}
		Applying the symmetrizer \(S(\bU_N)\) and testing against $\S_{N/2}^{1/2}\bD_N$ yields,
		\begin{align*}
			\frac12\frac{\dd}{\dd t}\left(S(\bU_N)\S_{N/2}^{1/2}\bD_N, \S_{N/2}^{1/2}\bD_N\right)_{L^2}=
			&\frac12\left( [\partial_t, S(\bU_N)] \S_{N/2}^{1/2}\bD_N, \S_{N/2}^{1/2}\bD_N \right)_{L^2} \\
			& - \sum_{j=1}^d \left( S(\bU_N)A_j(\bU_N)\partial_{x_j}\S_{N/2}^{1/2}\bD_N , \S_{N/2}^{1/2}\bD_N\right)_{L^2}\\
			& - \sum_{j=1}^d \left(S(\bU_N) [\S_{N/2}^{1/2}, A_j(\bU_N)]\partial_{x_j}\bD_N, \S_{N/2}^{1/2} \bD_N  \right)_{L^2} \\
			& - \sum_{j=1}^d \left(S(\bU_N)\S_{N/2}^{1/2}\left((A_j(\bU)-A_j(\bU_N))\partial_{x_j}\bU  \right), \S_{N/2}^{1/2} \bD_N\right)_{L^2}.
		\end{align*}
		The first two terms can be estimated in the standard manner using that $S(\cdot)$ is a symmetrizer, {\em i.e.} Assumption~\ref{assump.S1}. The third term is estimated using the regularizing properties of the commutator with \(\S_{N/2}^{1/2}\), Proposition~\ref{prop.commutator_0}. The fourth term is estimated thanks to Assumption~\ref{assump.A1}. Altogether we obtain
		\[\frac12\frac{\dd}{\dd t}\left(S(\bU_N)\S_{N/2}^{1/2}\bD_N, \S_{N/2}^{1/2}\bD_N\right)_{L^2} \leq C(\norm{\bU}_{H^{s}},\norm{\bU_N}_{H^{s}}) \left( \norm{\bU}_{H^s}+\norm{\bU_N}_{H^s} \right)\big(\norm{\S_{N/2}^{1/2}\bD_N}_{L^2}+\norm{\bD_N}_{L^2}\big)\norm{\S_{N/2}^{1/2}\bD_N}_{L^2}.\]
		With this estimate in hand, we can follow the proof of Proposition~\ref{prop.sym-sharpdifference}. Using that $\|\Id-\S_{N/2}^{1/2}\|_{H^s\to L^2}\leq \langle N/4\rangle^{-s}$, the coercivity of $S(\bU_N)$ ---see~\eqref{eq.S-coercive}--- and the uniform estimates for $\bU_N$ stated in Proposition~\ref{prop.symable_smooth_num_bound} we infer (by Grönwall's lemma), denoting $M^*\coloneq 2\sup_{t\in I^*}\left|\bU(t,\cdot)\right|_{H^{s_0}}$ with $d/2+1<s_0<s$ and assuming \(\sup_{t\in I^*}\left|\bU_N(t,\cdot) \right|_{H^{s_0}} \leq M^*\) and $\bU_N(I^*\times\RR^n)\subset \cK^*\subset \cU$,
			\[\norm{\S_{N/2}^{1/2}\bD_N(t,\cdot)}_{L^2}\leq C(\cK^*,M^*) M^*\big(\norm{\S_{N/2}^{1/2}\bD_N\big\vert_{t=0}}_{L^2} +t  N^{-s}\norm{\bU^0}_{H^s}\big) \, \exp\Big(C(\cK^*,M^*)M^* t\Big),\]
		and we have moreover $\S_{N/2}^{1/2}\bD_N\big\vert_{t=0}=0$ and
		\[\norm{\bD_N}_{L^2}\leq \norm{\S_{N/2}^{1/2}\bD_N}_{L^2}+\langle N/4\rangle^{-s}\norm{\bU^0}_{H^s}.\]
		This yields the desired estimate for $r=0$, and the general case follows by interpolation. A continuity argument as in Proposition~\ref{prop.sym-sharpdifference} allows to secure the bound \(\sup_{t\in I^*}\left|\bU_N(t,\cdot) \right|_{H^{s_0}} \leq M^*\) and the assumption ${\bU_N(I^*\times\RR^n)\subset \cK^*\subset \cU}$ for $N$ sufficiently large  (by the convergence $\bU_N(t,\cdot)\to \bU(t,\cdot)$ as $N\to\infty$ in $H^{s_0}(\RR^d)^n\subset W^{1,\infty}(\RR^d)^n$) along the desired estimate, which concludes the proof .
	\end{proof}

	\begin{Remark}\label{rem.general1} 
		Let us comment on the restrictive Assumption~\ref{assump.AS} arising in the study of system~\eqref{eq.symable-smooth}. This assumption allows to ensure that we can construct a symmetrizer operator \(\tilde{S}(\cdot)\) that is 
		\begin{enumerate}
			\item \label{i} bounded, coercive and self-adjoint for the $L^2(\RR^d)^n$ inner-product, and 
			\item \label{ii} such that \(\tilde{S}(\bU)(A_j^0 + \S_N A_j^1(\bU))\) is self-adjoint up to regularizing operators of order $-1$.
		\end{enumerate} 
		Let us notice that if Assumption~\ref{assump.AS} does not hold, it is possible to modify the semi-discretized scheme~\eqref{eq.symable-smooth} in a way that allows for a symmetrizer operator satisfying at least partially the above requirements.
		Specifically, consider the system
		\begin{equation}\label{eq.symable-smooth1}
			\partial_t \bU_N+\sum_{j=1}^d \sum_{\ell=0}^{m_j}  \S_N^\ell (A_j^\ell(\bU_N)[\circ])\partial_{x_j}\bU_N=\bz, \quad \bU_N\vert_{t=0} = \P_N \bU^0,
		\end{equation}
		and associated symmetrizer
		\[\tilde{S}(\bU)[\circ] \coloneq \sum_{\ell=0}^m  \S_N^{\ell/2}(S^\ell(\bU) \S_{N}^{\ell/2}[\circ]),\]
		where we used the convention $\S_N^0=\Id$ and $\S_N^\ell=\underbrace{\S_N\circ \dots \circ \S_N}_{\text{$\ell$ times}}$, and decompositions
		\begin{equation*}
			S(\bU) = \sum_{\ell= 0}^{m} S^\ell(\bU),  \qquad A_j(\bU) = \sum_{\ell=0}^{m_j} A_j^\ell(\bU) \qquad (j\in\{1,\dots,d\})
		\end{equation*}
		where entries of $S^\ell(\cdot)$ and $A_j^\ell(\cdot)$ are homogeneous polynomials of degree $\ell$.
	
		By Taylor expansion about the origin of $S(\cdot)$ and $S(\cdot)A_j(\cdot)$ and homogeneity we find that for all $\bU\in\cU$, $S^\ell(\bU)$ and $\sum_{\ell_1+\ell_2=\ell}S^{\ell_1}(\bU)A_j^{\ell_2}(\bU)$ are symmetric. This shows that $\tilde S(\bU)$ is self-adjoint  for the $L^2(\RR^d)^n$ inner-product and, denoting $\tilde A_j(\bU)[\circ]\coloneq \sum_{\ell=0}^{m_j}  \S_N^\ell (A_j^\ell(\bU)[\circ])$, one has that $\tilde S(\bU)\tilde A_j(\bU)$ is self-adjoint up to regularizing operators of order $-1$. Notice however that some additional restrictions on $\bU\in\cU$ may be necessary to enforce the coercivity of the operator $\tilde S(\bU)$.
	\end{Remark}

	\subsection{Discretization with sharp low-pass filters}
	\label{sec.symable.sharp}
	
	In this section we consider the case of spatial discretization through the sharp low-pass filter $\P_N=\Diag(P_N(D))$ where $P_N(\cdot)=\bo_{\llbracket -N,N \rrbracket^d}(\cdot)$:
	\begin{equation}\label{eq.symable-sharp}
		\partial_t \bU_N+\P_N\left(\sum_{j=1}^d A_j(\bU_N)\partial_{x_j}\bU_N\right)=\bz, \quad \bU_N\vert_{t=0} = \P_N\bU^0.
	\end{equation}
	The analysis of the previous section fails, due to the lack of good commutator properties of the operator $\P_N$. Specifically, when applying a symmetrizer \(S(\cdot)\) to the underlying system~\eqref{eq.hyp} we may write 
	\[		S(\bU_N)\partial_t \bU_N+\sum_{j=1}^d S(\bU_N)A_j(\bU_N)\partial_{x_j}\bU_N=-\sum_{j=1}^d S(\bU_N)(\Id-\P_N)\big(A_j(\bU_N)\partial_{x_j}\bU_N\big).\]
	In order to control the energy functional $\cF_s(\bU_N)\coloneq\left( S(\bU_N)\Lambda^s\bU_N,\Lambda^s\bU_N\right)^{1/2}\approx\norm{\bU_N}_{H^s}$ as in Proposition~\ref{prop.symable_smooth_num_bound}, we wish to control {(uniformly with respect to $N$) the quantity $J_N(\bU_N,\Lambda^s\bU_N)$ where}
	\[{J_N(\bU,\bV) \coloneq \left( \P_N \Big(S(\bU)(\Id-\P_N)\big(A_j(\bU)(\partial_{x_j}\P_N \bV )\big)\Big), \bV\right)_{L^2}.}\] 
	As we shall see in an example in Section~\ref{sec.illustration}, it turns out we cannot improve in general the bound 
	$ J_N=\cO(N \norm{\bV}_{L^2}^2)$. Notice that in the symmetric cases discussed in Section~\ref{sec.sym.sharp}, namely when $S=\Id$, we have $J_N=0$.
	
	In this section we consider symmetrizable systems satisfying the following assumption.
	\begin{Assumption}{A.5}\label{assump.A2}  
		There exists $S(\cdot)$ and an open set \(\cU \subset \RR^{n}\) with \(\bz\in\cU\) 
		such that for all $\bU \in\cU$,  $S(\bU)$ is real-valued, symmetric positive definite, and for all \(j\in \{1, \ldots, d\}\) there exists $S_j^0$ real-valued symmetric matrix with constant coefficients such that for all $\bU\in\cU$, 
		\[ A_j(\bU) = S_j^0 S(\bU). \]
		{We assume that all entries of $S(\cdot)$ are polynomial.}
	\end{Assumption}
	\begin{Remark}\label{R.assumptions}Assumption~\ref{assump.A2} is a special case of symmetrizable systems, as it implies Assumptions~\ref{assump.A1} and~\ref{assump.S1}.
	\end{Remark}
	\begin{Remark}[Hamiltonian systems]
		Assumption~\ref{assump.A2} is motivated by the Hamiltonian structure of the underlying system. Indeed, denote
		\[ \J \coloneq \sum_{j=1}^d S_j^0 \partial_{x_j}\]
		the constant-coefficient skew-symmetric (for the $L^2(\RR^d)^n$ inner-product) operator
		and $\sH:\bU\in\cU\to \RR$ coercive functional such that for all $\bU\in\cU$,
		\[ {\rm Hess}(\sH(\bU)) = S(\bU).\]
		Then we remark that under the Assumption~\ref{assump.A2}~\eqref{eq.hyp} takes the Hamiltonian form
		\[ \partial_t\bU + \J \big( \nabla_{\bU}\sH(\bU)\big)=\bz,\]
		where $\nabla_{\bU}\sH:\cU\to\RR^n$ is the Jacobian of $\sH$. 
		
		Moreover, noticing that~\eqref{eq.symable-sharp} also enjoys a Hamiltonian structure,
		\[ \partial_t\bU_N + \J \big( \nabla_{\bU}\sH_N(\bU_N)\big)=\bz\]
		where	$\sH_N(\bU) = \sH(\P_N\bU)$, we find that ${\rm Hess}(\sH_N(\bU_N))= \P_N S(\bU_N)\P_N$ is a symmetrizer of the system~\eqref{eq.symable-sharp}.
	\end{Remark}
	Under this assumption, we have the following bound on solutions \(\bU_N\) to the semi-discretized problems~\eqref{eq.symable-sharp}.
	\begin{Proposition}[Uniform estimates]
	Under the Assumption~\ref{assump.A2}, the statement of Proposition~\ref{prop.symable_smooth_num_bound} holds replacing~\eqref{eq.symable-smooth0} with~\eqref{eq.symable-sharp}.
	\end{Proposition}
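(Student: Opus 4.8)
The plan is to mirror the proof of Proposition~\ref{prop.symable_smooth_num_bound}, the only genuinely new ingredient being the a priori estimate~\eqref{eq.symable_tder_est} (with $\cK$-dependent constants), now established for solutions of~\eqref{eq.symable-sharp}; this is where Assumption~\ref{assump.A2} is decisive. As before, local existence and uniqueness of $\bU_N$ follows from the Cauchy--Lipschitz theorem applied to~\eqref{eq.symable-sharp} read as an ODE on the finite-dimensional space $\mathcal{T}_N^n$. Since $\bU_N\vert_{t=0}=\P_N\bU^0\in\mathcal{T}_N^n$ and the right-hand side maps $\mathcal{T}_N^n$ into itself, one has $\bU_N=\P_N\bU_N$ and $\bU_N\in\cC^1(I_N;H^{s'}((2\pi\TT)^d)^n)$ for every $s'\ge s$. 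No modified symmetrizer is needed here: the coercivity~\eqref{eq.S-coercive} holds verbatim for the genuine symmetrizer $S(\bU_N)$, because by Assumption~\ref{assump.A2} $S(\cdot)$ is symmetric positive definite on $\cU$, so $\alpha(\cK)\Id\le S(\bU)\le\beta(\cK)\Id$ for $\bU$ ranging in any compact $\cK\subset\cU$. Consequently $\cF_{s'}(t):=\big(S(\bU_N)\Lambda^{s'}\bU_N,\Lambda^{s'}\bU_N\big)_{L^2}^{1/2}$ is equivalent to $\left|\bU_N\right|_{H^{s'}}$ uniformly in $N$.

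Writing $\dot\bU_N:=\Lambda^{s'}\bU_N$, applying $\Lambda^{s'}$ to~\eqref{eq.symable-sharp} with $A_j(\bU_N)=S_j^0 S(\bU_N)$, and differentiating the energy, I would obtain
\[\frac12\frac{\dd}{\dd t}\cF_{s'}^2=\frac12\big((\partial_t S(\bU_N))\dot\bU_N,\dot\bU_N\big)_{L^2}+\big(S(\bU_N)R,\dot\bU_N\big)_{L^2}+T,\]
where $R=-\P_N\sum_{j=1}^d S_j^0[\Lambda^{s'},S(\bU_N)]\partial_{x_j}\bU_N$ collects the commutator and the principal term is
\[T:=-\sum_{j=1}^d\big(S(\bU_N)\P_N S_j^0 S(\bU_N)\partial_{x_j}\dot\bU_N,\dot\bU_N\big)_{L^2}.\]
The first two terms are standard: Propositions~\ref{prop.commutator_s} and~\ref{prop.composition_estimates} bound $\left|R\right|_{L^2}\le C(\cK,\left|\bU_N\right|_{H^s})\left|\bU_N\right|_{H^s}\left|\bU_N\right|_{H^{s'}}$, while $\left|\partial_t S(\bU_N)\right|_{L^\infty}\le C(\cK)\left|\partial_t\bU_N\right|_{L^\infty}\le C(\cK,\left|\bU_N\right|_{H^s})\left|\bU_N\right|_{H^s}$ is controlled by reading $\partial_t\bU_N$ off~\eqref{eq.symable-sharp}, using $\P_N:H^{s-1}\to H^{s-1}$, the product and composition estimates (Propositions~\ref{prop.product_estimates},~\ref{prop.composition_estimates}) and the embedding $H^{s-1}\subset L^\infty$ (Proposition~\ref{prop.embedding}, valid since $s>d/2+1$).

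The crux, and the main obstacle, is to bound $T$ by $C(\cK,\left|\bU_N\right|_{H^s})\left|\bU_N\right|_{H^s}\left|\dot\bU_N\right|_{L^2}^2$ uniformly in $N$: this is exactly where the sharp filter is dangerous, since generically $J_N=\cO(N\left|\cdot\right|_{L^2}^2)$, and where Assumption~\ref{assump.A2} rescues the estimate. The key observation is that $S_j^0$ is a \emph{constant} matrix, hence commutes with the Fourier multiplier $\P_N$, so that $S(\bU_N)\P_N S_j^0 S(\bU_N)=S(\bU_N)S_j^0\P_N S(\bU_N)$. Transferring the operator in $T$ to the second slot and integrating by parts in $\partial_{x_j}$ (legitimate as $S(\bU_N)$, $S_j^0$, $\P_N$ are self-adjoint and $\partial_{x_j}$ is skew-adjoint) gives a second representation of $T$; averaging it with the one above, the top-order contributions $\pm\big(\partial_{x_j}\dot\bU_N,S(\bU_N)S_j^0\P_N S(\bU_N)\dot\bU_N\big)_{L^2}$ cancel exactly thanks to the identity $S(\bU_N)\P_N S_j^0=S(\bU_N)S_j^0\P_N$, leaving
\[2T=\sum_{j=1}^d\Big(\big((\partial_{x_j}S(\bU_N))\P_N S_j^0 S(\bU_N)\dot\bU_N,\dot\bU_N\big)_{L^2}+\big(S(\bU_N)\P_N S_j^0(\partial_{x_j}S(\bU_N))\dot\bU_N,\dot\bU_N\big)_{L^2}\Big).\]
Each surviving term carries a factor $\partial_{x_j}S(\bU_N)=(\nabla_{\bU}S)(\bU_N)\partial_{x_j}\bU_N$, bounded in $L^\infty$ by $C(\cK)\left|\bU_N\right|_{W^{1,\infty}}\le C(\cK)\left|\bU_N\right|_{H^s}$, while $\P_N$, $S_j^0$ and $S(\bU_N)$ are bounded on $L^2$ uniformly in $N$; the bound on $T$ follows. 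This is precisely the discrete counterpart of the skew-adjointness of $\J=\sum_j S_j^0\partial_{x_j}$, which commutes with $\P_N$, i.e. of the Hamiltonian structure.

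Combining the three estimates yields the analog of~\eqref{eq.symable_tder_est}, and with the coercivity~\eqref{eq.S-coercive} one gets $\frac{\dd}{\dd t}\cF_{s'}\le C(\cK,\left|\bU_N\right|_{H^s})\left|\bU_N\right|_{H^s}\cF_{s'}$. Grönwall's lemma, the control of $\left|\bU_N-\bU^0\right|_{L^\infty}$ ensuring that $\bU_N$ stays in a compact $\cK^*\subset\cU$, and the continuity argument of Proposition~\ref{prop.sym_sharp_num_bound} then close the proof exactly as in Proposition~\ref{prop.symable_smooth_num_bound}, delivering both the uniform $H^s$ bound and the $H^{s'}$ estimate. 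I expect the algebraic cancellation in $T$ to be the only delicate point: lacking the regularizing commutator estimates enjoyed by $\S_N$, the whole argument hinges on $S_j^0$ being simultaneously constant and symmetric, that is, on the full strength of Assumption~\ref{assump.A2}.
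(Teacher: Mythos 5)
Your proposal is correct and takes essentially the same route as the paper: the paper likewise performs the $H^{s'}$ energy estimate with the genuine symmetrizer $S(\bU_N)$, exploiting that $\P_N=\P_N^2$ commutes with $\Lambda^{s'}$ and with the constant symmetric matrices $S_j^0$, so that the principal term cancels exactly by self-adjointness after integration by parts, leaving precisely your three contributions (the $[\partial_t,S(\bU_N)]$ term, the $[\partial_{x_j},S(\bU_N)]$ term, and the $[\Lambda^{s'},S_j^0S(\bU_N)]$ commutator term). These are bounded exactly as you indicate, and the conclusion then follows from the coercivity of $S(\bU_N)$, Grönwall's lemma and the continuity argument, as in Proposition~\ref{prop.symable_smooth_num_bound}.
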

	\begin{proof}
		We follow very closely the proof of Proposition~\ref{prop.symable_smooth_num_bound}, and only sketch how the necessary estimates can be obtained. Apply $\P_N S(\bU_N)\P_N\Lambda^{s'}$ to the system~\eqref{eq.symable-sharp} and use the identity $A_j(\bU_N)=S_j^0S(\bU_N)$  to infer
		\begin{multline*}
			\P_N \big(S(\bU_N)\P_N (\partial_t \Lambda^{s'} \bU_N)\big) + \sum_{j=1}^d \P_N \Big(S(\bU_N)\big(\P_N  S_j^0 \P_N(S(\bU_N) \partial_{x_j}\Lambda^{s'}\bU_N)\big)\Big)\\
			=-\P_N\left(\sum_{j=1}^d  S(\bU_N)\P_N\big([\Lambda^{s'},S_j^0S(\bU_N)]\partial_{x_j}\bU_N\big)\right)
		\end{multline*}
		where we used that $\P_N^2=\P_N$ commutes with $S_j^0$ and $\Lambda^{s'}$. We can now test the identity against $\Lambda^{s'} \bU_N$ and use the self-adjointness of \(\P_N\), \(S_{j}^0\) and  \(S(\bU_N)\) as well as the identity $\bU_N=\P_N\bU_N$ to infer
		\begin{align*}
			\frac{1}{2} \frac{\dd}{\dd t}\left(S(\bU_N) \Lambda^{s'} \bU_N, \Lambda^{s'} \bU_N \right)_{L^2} =&\frac{1}{2}\left([\partial_t, S(\bU_N)] \Lambda^{s'} \bU_N, \Lambda^{s'} \bU_N 	\right)_{L^2}\\
			&+ \sum_{j=1}^d \Big(  S_j^0 \P_N\big([\partial_{x_j},S(\bU_N)] \Lambda^{s'}\bU_N\big),\P_N\big(S(\bU_N)\Lambda^{s'}\bU_N\big)\Big)_{L^2}\\
			&-	\sum_{j=1}^d  \left(S(\bU_N)\P_N\big([\Lambda^{s'},S_j^0S(\bU_N)]\partial_{x_j}\bU_N\big) ,\Lambda^{s'}\bU_N\right)_{L^2}.
		\end{align*}
		We then proceed as in Proposition~\ref{prop.symable_smooth_num_bound} and obtain the energy estimate valid as long as $\bU_N$ takes values into $\cK\subset \cU$ compact:
		\[
			\frac{1}{2} \frac{\dd}{\dd t}\left(S(\bU_N) \Lambda^{s'} \bU_N, \Lambda^{s'} \bU_N \right)_{L^2} \leq C\left|\bU_N \right|_{H^{s}}\left|  \bU_N\right|_{H^{s'}}^2,
		\]
		where the constant $C$ depends only on $s,s',\cK$  and non-decreasingly on $\left|\bU_N \right|_{H^{s}}$. We also have immediately the coercivity of $S(\bU)$: for any $\bU\subset \cK$  and $\bV\in\RR^n$ one has
		\[\alpha\left|\bV \right|_{L^2}^2 \leq (S(\bU)\bV, \bV) \leq \beta\left|\bV \right|_{L^2}^2,\]
		where $0<\alpha\leq\beta<\infty$ depend uniquely on $\cK$. These two ingredients yield the desired result.
	\end{proof}

	Having established uniform bounds for solutions \(\bU_N\) to the semi-discretized system~\eqref{eq.symable-sharp}, we infer the convergence towards corresponding solutions of the underlying continuous problem~\eqref{eq.hyp} as \(N\to \infty\). 
	\begin{Proposition}[Convergence]\label{prop.symable-sharpdifference}
		Under Assumption~\ref{assump.A2}, the statement of Proposition~\ref{prop.symable-difference} holds replacing~\eqref{eq.symable-smooth0} with~\eqref{eq.symable-sharp}.
	\end{Proposition}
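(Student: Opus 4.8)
The plan is to follow the proof of Proposition~\ref{prop.symable-difference} almost verbatim, the only genuinely new ingredient being the uniform bounds for the sharp scheme established just above. Writing $\bD_N\coloneq\bU-\bU_N$ and subtracting~\eqref{eq.symable-sharp} from~\eqref{eq.hyp}, one gets
\[\partial_t \bD_N + \sum_{j=1}^d A_j(\bU_N)\partial_{x_j}\bD_N+\sum_{j=1}^d (A_j(\bU)-A_j(\bU_N))\partial_{x_j}\bU=-(\Id-\P_N)\Big(\sum_{j=1}^d A_j(\bU_N)\partial_{x_j}\bU_N\Big).\]
The key observation is that, exactly as $(\Id-\S_N)\S_{N/2}^{1/2}=0$ was used in the smooth case, here $\S_{N/2}^{1/2}(\Id-\P_N)=0$, since the symbol $S_{N/2}$ vanishes on $\{\max_j|k_j|\geq N/2\}\supset\{\max_j|k_j|\geq N+1\}=\mathrm{supp}(1-P_N)$. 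Applying $\S_{N/2}^{1/2}$ therefore annihilates the sharp-filter residual on the right-hand side and yields precisely the same evolution equation for $\S_{N/2}^{1/2}\bD_N$ as in Proposition~\ref{prop.symable-difference}, featuring the commutator term $\sum_{j}[\S_{N/2}^{1/2},A_j(\bU_N)]\partial_{x_j}\bD_N$.

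From here the energy estimate is identical. Since Assumption~\ref{assump.A2} implies Assumptions~\ref{assump.A1} and~\ref{assump.S1} (Remark~\ref{R.assumptions}), $S(\bU)$ is an admissible Friedrichs symmetrizer, so I would apply $S(\bU_N)$ and test against $\S_{N/2}^{1/2}\bD_N$. The four resulting contributions are controlled as before: the $[\partial_t,S(\bU_N)]$ term via the system and the embedding $H^s\subset W^{1,\infty}$ (Proposition~\ref{prop.embedding}); the transport term via symmetry of $S(\bU_N)A_j(\bU_N)$ and integration by parts; the commutator term via the regularizing bound of Proposition~\ref{prop.commutator_0}; and the source term via the composition estimate (Proposition~\ref{prop.composition_estimates}) together with Assumption~\ref{assump.A1}. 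Using the coercivity~\eqref{eq.S-coercive} of $S(\bU_N)$, the bound $\|\Id-\S_{N/2}^{1/2}\|_{H^s\to L^2}\leq\langle N/4\rangle^{-s}$, the vanishing of $\S_{N/2}^{1/2}\bD_N$ at $t=0$ (because $\S_{N/2}^{1/2}(\Id-\P_N)\bU^0=0$), Grönwall's lemma and interpolation (Proposition~\ref{prop.interpolation_inequality}) then delivers the claimed $N^{r-s}$ rate for all $0\leq r\leq s$.

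The final step is the continuity argument of Proposition~\ref{prop.sym-sharpdifference}, run simultaneously at the two levels $H^s$ and $H^{s'}$ with $s'>s$, which promotes the local-in-time comparison to an arbitrary compact $I^*\subset I$ and fixes $N_0$; negative times follow by time reversal. This is where Assumption~\ref{assump.A2} is genuinely needed: closing the bootstrap requires the uniform-in-$N$ control of $|\bU_N|_{H^s}$ and $|\bU_N|_{H^{s'}}$ and the confinement of the range of $\bU_N$ to a compact $\cK^*\subset\cU$, which are supplied by the sharp-filter analogue of Proposition~\ref{prop.symable_smooth_num_bound} via the Hamiltonian symmetrizer $\P_N S(\bU_N)\P_N$. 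I expect this to be the main (and essentially the only) obstacle: whereas the poor commutator behaviour of $\P_N$ obstructs a direct high-order energy estimate and is the source of the unavoidable $J_N=\cO(N)$ term, the comparison estimate itself sidesteps this by testing with the smooth filter $\S_{N/2}^{1/2}$, so that the structural Assumption~\ref{assump.A2} enters the convergence proof only indirectly, through the a priori bounds feeding the continuity argument.
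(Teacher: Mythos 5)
Your proposal is correct and follows essentially the same route as the paper, whose proof of Proposition~\ref{prop.symable-sharpdifference} simply declares it identical to that of Proposition~\ref{prop.symable-difference}: the sharp-filter residual is annihilated by $\S_{N/2}^{1/2}$ exactly as you observe, after which the symmetrized energy estimate, Grönwall, interpolation and the two-level continuity argument go through unchanged. Your closing remark correctly pinpoints why this works ---Assumption~\ref{assump.A2} is needed only for the uniform-in-$N$ a priori bounds on $\bU_N$, while the comparison estimate itself only uses Assumptions~\ref{assump.A1} and~\ref{assump.S1}--- which is precisely what justifies the paper's one-line proof.
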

	\begin{proof}
		The proof is identical to that of Proposition~\ref{prop.symable-difference}.
	\end{proof}
	
	\section{Numerical experiments for the Saint-Venant system}
	\label{sec.illustration}
	
	We shall illustrate our findings and investigate numerically the standard Saint-Venant (or shallow water) system

	\begin{equation}\label{eq.SV}
		\left\{\begin{array}{l}
			\partial_t\eta+\nabla\cdot\big((1+\eta)\bu\big)=0,\\[1ex]
			\partial_t\bu +\nabla \eta + (\bu\cdot\nabla)\bu=\bz,
		\end{array}\right.
	\end{equation}
 	which describes the propagation of shallow water waves in the flat-bottom situation; see~\cite{Lannes}. Specifically, the scalar variable $\eta$ describes the elevation of the surface of a layer of homogeneous, incompressible and inviscid fluid and the variable $\bu$ represents the layer-averaged horizontal velocity of fluid particles (both depending on time and horizontal space). The gravitational constant and reference depth have been set to $g=1$ and $H=1$.
		 
	It will be interesting to consider the following variant (when $d=2$)
	\begin{equation}\label{eq.SV-2}
		\left\{\begin{array}{l}
			\partial_t\eta+\nabla\cdot\big((1+\eta)\bu\big)=0,\\[1ex]
			\partial_t\bu +\nabla \eta + \tfrac12\nabla(|\bu|^2)=\bz.
		\end{array}\right.
	\end{equation}
	While the two systems are identical when \(d=1\), only the second has a Hamiltonian structure when \(d=2\); see below. As exhibited in the following section, the hyperbolicity domain of the Hamiltonian system~\eqref{eq.SV-2} is a strict subset of the  hyperbolicity domain of the standard system~\eqref{eq.SV}. 
	
	Thanks to these features, numerical experiments on the Saint-Venant systems allow to showcase our numerical findings, that we summarize here for the sake of readability. 
		\begin{itemize}
			\item Numerical experiments validate our results concerning the spectral convergence of the (semi-)discretized solutions as $N\to\infty$, when Assumptions~\ref{assump.A1},~\ref{assump.S1} and~\ref{assump.AS} hold and smooth low-pass filters are used.
			\item In the case of sharp low-pass filters, we have {\em not} been able to observe numerical instabilities when Assumptions~\ref{assump.A1},~\ref{assump.S1} and~\ref{assump.AS} hold but Assumption~\ref{assump.A2} fails. 
			\item Contrarily to sharp low-pass filters, smooth low-pass filters are able to instate a form of stability even outside the domain of hyperbolicity, that is when Assumption~\ref{assump.S1} fails.
		\end{itemize}

	\subsection{Analysis of the Saint-Venant system}
	\label{sec.SV_analysis}
	
	We can apply the analysis of the previous section to systems~\eqref{eq.SV} and~\eqref{eq.SV-2} due to the following result.
	\begin{Lemma}\label{lem.SV}	
		System~\eqref{eq.SV} is a symmetrizable hyperbolic system in the sense of Assumptions~\ref{assump.A1} and~\ref{assump.S1}  with hyperbolic domain $\cU\coloneq \{(\eta,\bu)\in\RR^{1+d} \ : \ 1+\eta >0\}$ and
		\begin{equation}\label{eq.SV-sym}
			S((\eta,\bu)) = \begin{pmatrix}
				1&\bz^\top\\\bz&(1+\eta)\Id
			\end{pmatrix}
		\end{equation} 
		where $\Id$ is the identity matrix in $\RR^d$. Moreover, the additional Assumption~\ref{assump.AS} holds (see Remark~\ref{rem.AS-linear}).

		System~\eqref{eq.SV-2} satisfies Assumption~\ref{assump.A2} (and hence Assumptions~\ref{assump.A1} and~\ref{assump.S1}; see Remark~\ref{R.assumptions}) with hyperbolic domain  \(\cU_{\sH} \coloneq \{(\eta,\bu)\in\RR^{1+d} \ : \ 1 + \eta - \left|\bu \right|^2 >0\}\) and
		\begin{equation}\label{eq.SV-sym-H}
			S_{\sH}((\eta, \bu)) = \begin{pmatrix} 1 &\bu^\top \\ \bu &(1+\eta)\Id \end{pmatrix} \text{ and } S_j^0 = \begin{pmatrix} 0 & \be_j^\top \\ \be_j & \bz \end{pmatrix} \text{ where } \be_1 = \begin{pmatrix} 1 \\ 0 \end{pmatrix}, \be_2 = \begin{pmatrix} 0 \\ 1 \end{pmatrix}
		\end{equation}
		(when \(d=2\), set \(\be_j = 1\) for the analogous definitions for \(d=1\))
		and Hamiltonian energy
		\[\sH((\eta,\bu)) = \frac12\int_{(2\pi\TT)^d} \eta^2+(1+\eta)|\bu|^2\dd \bx.\]
	\end{Lemma}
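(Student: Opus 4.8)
The plan is to write each system in the quasilinear form~\eqref{eq.hyp} by reading off the coefficient matrices $A_j(\bU)$, and then to verify the claimed structural properties by direct block-matrix computations. Writing $\bU=(\eta,\bu)$ and indexing rows and columns by $\{0,1,\dots,d\}$, the continuity equation contributes the top row and the momentum equations the lower block. For both systems the equation $\partial_t\eta+\nabla\cdot((1+\eta)\bu)=0$ yields the same top row, giving $(A_j)_{00}=u_j$ and $(A_j)_{0k}=(1+\eta)\delta_{jk}$, while $\nabla\eta$ gives $(A_j)_{i0}=\delta_{ij}$. The essential distinction lies in the lower-right block: the convective derivative $(\bu\cdot\nabla)\bu$ in~\eqref{eq.SV} produces $(A_j)_{ik}=u_j\delta_{ik}$, i.e. a block $u_j\Id$, whereas the gradient form $\tfrac12\nabla(|\bu|^2)=\sum_k u_k\partial_{x_i}u_k$ in~\eqref{eq.SV-2} produces $(A_j)_{ik}=u_k\delta_{ij}$, i.e. a block $\be_j\bu^\top$. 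I would thus record
\[A_j(\bU)=\begin{pmatrix} u_j & (1+\eta)\be_j^\top\\ \be_j & u_j\Id\end{pmatrix}\quad\text{for~\eqref{eq.SV}}, \qquad A_j(\bU)=\begin{pmatrix} u_j & (1+\eta)\be_j^\top\\ \be_j & \be_j\bu^\top\end{pmatrix}\quad\text{for~\eqref{eq.SV-2}}.\]
In both cases the entries are affine in $\bU$, so Assumption~\ref{assump.A1} holds.

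For system~\eqref{eq.SV} I would check directly that the proposed $S$ in~\eqref{eq.SV-sym} symmetrizes: a block multiplication gives, for~\eqref{eq.SV},
\[S\,A_j=\begin{pmatrix} u_j & (1+\eta)\be_j^\top\\ (1+\eta)\be_j & u_j(1+\eta)\Id\end{pmatrix},\]
which is symmetric, while $S$ is manifestly symmetric and positive definite exactly when $1+\eta>0$, giving the stated hyperbolic domain $\cU$. Since both $A_j$ and $S$ are affine in $\bU$, the homogeneous parts $A_j^1$ and $S^1$ are linear, so Assumption~\ref{assump.AS} follows at once from Remark~\ref{rem.AS-linear} (alternatively one verifies the three symmetry conditions by a short computation using $S^0=\Id$).

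For the Hamiltonian variant~\eqref{eq.SV-2} the decisive step is the factorization $A_j=S_j^0 S_{\sH}$, which I would verify by the block product
\[\begin{pmatrix} 0 & \be_j^\top\\ \be_j & \bz\end{pmatrix}\begin{pmatrix} 1 & \bu^\top\\ \bu & (1+\eta)\Id\end{pmatrix}=\begin{pmatrix} u_j & (1+\eta)\be_j^\top\\ \be_j & \be_j\bu^\top\end{pmatrix},\]
matching the matrix computed above; this is exactly Assumption~\ref{assump.A2} with the constant symmetric matrices $S_j^0$. To confirm the Hamiltonian interpretation I would compute the Hessian of the density $h=\tfrac12(\eta^2+(1+\eta)|\bu|^2)$, finding $\partial_\eta^2 h=1$, $\partial_\eta\partial_{u_i}h=u_i$, and $\partial_{u_i}\partial_{u_k}h=(1+\eta)\delta_{ik}$, i.e. ${\rm Hess}(\sH)=S_{\sH}$; and I would check that $\J=\sum_j S_j^0\partial_{x_j}$ (skew-adjoint since each $S_j^0$ is a constant symmetric matrix and $\partial_{x_j}$ is skew-adjoint) applied to $\nabla_\bU\sH=(\eta+\tfrac12|\bu|^2,\,(1+\eta)\bu)$ reproduces precisely~\eqref{eq.SV-2}.

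The one genuinely computational point, which I regard as the main obstacle, is the precise positive-definiteness domain of $S_{\sH}$. Here I would use the Schur complement about the $(1,1)$ entry: since $1>0$, the matrix $S_{\sH}$ is positive definite if and only if $(1+\eta)\Id-\bu\bu^\top$ is, and the eigenvalues of the latter are $1+\eta$ (with multiplicity $d-1$) and $1+\eta-|\bu|^2$ (in the direction $\bu$). Hence positive definiteness is equivalent to $1+\eta-|\bu|^2>0$, which also forces $1+\eta>0$; this identifies $\cU_{\sH}$ and shows it to be a strict subset of $\cU$ whenever $\bu\neq\bz$, consistent with the discussion preceding the lemma.
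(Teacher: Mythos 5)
Your proposal is correct and follows essentially the same route as the paper: the paper's proof consists of writing both systems in the quasilinear form with exactly the matrices $A_j$ you derived and then declaring the verification of the assumptions straightforward. Your block-matrix checks of $S A_j$, the factorization $A_j = S_j^0 S_{\sH}$, the Hessian/Hamiltonian identities, and the Schur-complement determination of the positive-definiteness domain $\cU_{\sH}$ are precisely the details the paper leaves to the reader, and they are all accurate.
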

	\begin{Remark}
		As aforementioned, systems~\eqref{eq.SV} and~\eqref{eq.SV-2} are identical when $d=1$, and hence enjoy both properties. Notice that, when $d=2$, the domain of hyperbolicity of system~\eqref{eq.SV-2}, $\cU_{\sH}$, is strictly embedded in the domain of hyperbolicity of system~\eqref{eq.SV}, $\cU$, while only the former satisfies Assumption~\ref{assump.A2}, associated with its Hamiltonian formulation.
	\end{Remark}
	\begin{proof}
		The systems~\eqref{eq.SV} and~\eqref{eq.SV-2} can be reformulated as 
		\begin{equation}\label{eq.SV-matrix-formulation}
			\partial_t \bU+\sum_{j=1}^d A_j(\bU)\partial_{x_j}\bU=\bz,
		\end{equation}
		with \(\bU = (\eta, \bu)\) and 
		\begin{equation*}
			A_j((\eta, \bu)) = \begin{pmatrix} u_j & (1+ \eta)\be_j^\top \\ \be_j &u_j \Id \end{pmatrix}
		\end{equation*}
		for system~\eqref{eq.SV} and 
		\begin{equation*}
			A_j((\eta, \bu)) = \begin{pmatrix} u_j & (1+ \eta)\be_j^\top \\\be_j & \be_j \bu^\top \end{pmatrix}
		\end{equation*}
		for system~\eqref{eq.SV-2}. It is then straightforward to check the assumptions.
	\end{proof}

	Recall the spatial discretization of the system~\eqref{eq.SV-matrix-formulation} with the smooth low-pass filter \(\S_N\), 
	\begin{equation}\label{eq.SV-smooth0}
		\partial_t \bU_N+\S_N\left(\sum_{j=1}^d A_j(\bU_N)\partial_{x_j}\bU_N\right)=\bz, \quad \bU_N\vert_{t=0} = \P_N\bU^0,
	\end{equation}
	or
	\begin{equation}\label{eq.SV-smooth}
		\partial_t \bU_N+\sum_{j=1}^d (A_j^0 + \S_N (A_j^1(\bU_N)[\circ]))\partial_{x_j}\bU_N=\bz, \quad \bU_N\vert_{t=0} = \P_N \bU^0,
	\end{equation}
	and the spatial discretization with the sharp low-pass filter \(\P_N\), 
	\begin{equation}\label{eq.SV-sharp}
		\partial_t \bU_N+\P_N\left(\sum_{j=1}^d A_j(\bU_N)\partial_{x_j}\bU_N\right)=\bz, \quad \bU_N\vert_{t=0} = \P_N\bU^0.
	\end{equation}

	It follows immediately from Lemma~\ref{lem.SV} that we have convergence of the numerical scheme in all three cases. 

	\begin{Proposition}(Convergence)
		The statement of Proposition~\ref{prop.symable-difference}, concerning {spectral} convergence of solutions to the semi-discrete systems~\eqref{eq.SV-smooth0} and~\eqref{eq.SV-smooth}, holds for the Saint-Venant system~\eqref{eq.SV} whenever $\bU^0$ takes values in ${\cU\coloneq  \{(\eta,\bu)\in\RR^{1+d} \ : \ 1+\eta >0\}}$. If additionally \(\bU^0\) takes values in  \(\cU_{\sH} \coloneq \{(\eta,\bu)\in\RR^{1+d} \ : \ 1 + \eta - \left|\bu \right|^2 >0\}\), then Proposition~\ref{prop.symable-difference} also holds for the Hamiltonian Saint-Venant system~\eqref{eq.SV-2}.

		Whenever \(\bU^0\) takes values in  \(\cU_{\sH}\), we furthermore have that the statement of Proposition~\ref{prop.symable-sharpdifference}, concerning {spectral} convergence of solutions to the semi-discrete system~\eqref{eq.SV-sharp}, holds for the system~\eqref{eq.SV-2}.
	\end{Proposition}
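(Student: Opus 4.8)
The plan is to present this proposition as a direct corollary of the abstract convergence results of Subsections~\ref{sec.symable.smooth} and~\ref{sec.symable.sharp}, with Lemma~\ref{lem.SV} supplying the verification of every structural hypothesis. The only genuine content of the proof is a careful bookkeeping matching each Saint-Venant system and each discretization to the corresponding general statement, together with a check that the domain restrictions on $\bU^0$ guarantee coercivity of the relevant symmetrizer. First I would record that, upon writing $\bU=(\eta,\bu)$ and reading off the matrices $A_j(\bU)$ from the proof of Lemma~\ref{lem.SV}, the semi-discrete systems~\eqref{eq.SV-smooth0},~\eqref{eq.SV-smooth} and~\eqref{eq.SV-sharp} are precisely instances of the abstract systems~\eqref{eq.symable-smooth0},~\eqref{eq.symable-smooth} and~\eqref{eq.symable-sharp}, respectively.

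For the standard system~\eqref{eq.SV}, Lemma~\ref{lem.SV} states that Assumptions~\ref{assump.A1} and~\ref{assump.S1} hold on the hyperbolic domain $\cU=\{1+\eta>0\}$ with symmetrizer~\eqref{eq.SV-sym}, and moreover that the compatibility Assumption~\ref{assump.AS} holds. Hence both hypotheses of Proposition~\ref{prop.symable-difference} are met: the result for~\eqref{eq.symable-smooth0} applies to~\eqref{eq.SV-smooth0} under Assumptions~\ref{assump.A1} and~\ref{assump.S1} alone, while the variant for~\eqref{eq.symable-smooth}, which additionally requires Assumption~\ref{assump.AS}, applies to~\eqref{eq.SV-smooth}. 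In both cases the hypothesis that $\bU^0$ take values in $\cU$ is exactly what ensures that the symmetrizer~\eqref{eq.SV-sym} is coercive (it is block-diagonal with positive blocks precisely when $1+\eta>0$), so that one may pick a compact $\cK^*\subset\cU$ containing the range of $\bU$ and invoke the proposition verbatim.

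For the Hamiltonian system~\eqref{eq.SV-2}, Lemma~\ref{lem.SV} gives Assumption~\ref{assump.A2} on the smaller domain $\cU_{\sH}=\{1+\eta-|\bu|^2>0\}$ with symmetrizer $S_{\sH}$ from~\eqref{eq.SV-sym-H}; by Remark~\ref{R.assumptions} this entails Assumptions~\ref{assump.A1} and~\ref{assump.S1}. The constraint $\bU^0\in\cU_{\sH}$ is again precisely the positive-definiteness condition for $S_{\sH}$, whose Schur complement $(1+\eta)\Id-\bu\bu^\top$ has smallest eigenvalue $1+\eta-|\bu|^2$, so $S_{\sH}$ is coercive on compacts of $\cU_{\sH}$. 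For the smooth-filter convergence I would then apply Proposition~\ref{prop.symable-difference}: for~\eqref{eq.SV-smooth0} no further hypothesis is needed, while for~\eqref{eq.SV-smooth} one needs Assumption~\ref{assump.AS}, which I would obtain from Remark~\ref{rem.AS-linear} by observing that the entries of $A_j$ in~\eqref{eq.SV-2} and of $S_{\sH}$ are affine in $\bU$, so that their nonlinear parts $A_j^1$ and $S_{\sH}^1$ are homogeneous of order $1$. Finally, for the sharp-filter system~\eqref{eq.SV-sharp} I would invoke Proposition~\ref{prop.symable-sharpdifference}, whose sole hypothesis is Assumption~\ref{assump.A2}, already supplied by Lemma~\ref{lem.SV}.

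Since every ingredient is either furnished by Lemma~\ref{lem.SV} or follows from the cited remarks, there is no genuine analytic obstacle; the care required is entirely organizational. The one point that merits explicit attention is that Assumption~\ref{assump.AS} is \emph{not} asserted for~\eqref{eq.SV-2} in Lemma~\ref{lem.SV} and must be recovered from Remark~\ref{rem.AS-linear} before Proposition~\ref{prop.symable-difference} can be applied to~\eqref{eq.SV-smooth}; and that in each case one should confirm that the domain hypothesis on $\bU^0$ keeps the continuous solution $\bU$ inside a compact subset of the relevant hyperbolic domain, so that the coercivity bound~\eqref{eq.S-coercive} underlying the abstract propositions remains available throughout $I^*$.
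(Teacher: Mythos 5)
Your proposal is correct and follows essentially the same route as the paper: the proposition is obtained as a direct corollary of Lemma~\ref{lem.SV} combined with Propositions~\ref{prop.symable-difference} and~\ref{prop.symable-sharpdifference}, the domains $\cU$ and $\cU_{\sH}$ being precisely the positive-definiteness (coercivity) domains of the symmetrizers $S$ and $S_{\sH}$ from~\eqref{eq.SV-sym} and~\eqref{eq.SV-sym-H}. Your explicit recovery of Assumption~\ref{assump.AS} for~\eqref{eq.SV-2} via Remark~\ref{rem.AS-linear} (affine entries, hence degree-one homogeneous nonlinear parts $A_j^1$ and $S_{\sH}^1$) fills in a detail that the paper's proof leaves implicit, and it is correct.
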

	\begin{proof}
		Lemma~\ref{lem.SV} ensures that the systems~\eqref{eq.SV} and~\eqref{eq.SV-2} satisfy the assumptions of Proposition~\ref{prop.symable-difference} with symmetrizers \(S(\bU)\) and \(S_{\sH}(\bU)\) respectively, and that~\eqref{eq.SV-2} additionally satisfies the assumptions of Proposition~\ref{prop.symable-sharpdifference}. The  domains \(\cU\) and \(\cU_{\sH}\) correspond respectively to the domains for which the symmetrizers \(S(\bU)\) and \(S_{\sH}(\bU)\) in~\eqref{eq.SV-sym},~\eqref{eq.SV-sym-H} are positive definite, as required in Assumptions~\ref{assump.S1} and~\ref{assump.A2}.
	\end{proof}

	\begin{Remark}\label{R.symmetrizer-issue}
		As discussed in the previous section, Section~\ref{sec.symable.sharp}, we require more stringent structural assumptions to show convergence for symmetrizable systems when discretizing with the sharp low-pass filter \(\P_N\). The Saint-Venant system when \(d=1\) illustrates that we may also have to impose more stringent restrictions on the inital data (namely \(\bU^0\) taking values in \(\cU_{\sH}\)), even for systems that satify the structural assumptions. 

		This is because \emph{any} symmetrizer for the underlying system in the sense of Assumption~\ref{assump.S1} satisfying the additional compatibility Assumption~\ref{assump.AS} can be used to construct a symmetrizer to the semi-discrete systems~\eqref{eq.SV-smooth0} and~\eqref{eq.SV-smooth}.
		For the semi-discretization with \(\P_N\)~\eqref{eq.SV-sharp} on the other hand, we use the symmetrizer directly related to the structure of the system through Assumption~\ref{assump.A2}. 
	\end{Remark}
		 
	Let us illustrate the discussion in Remark~\ref{R.symmetrizer-issue}.
	As discussed in the beginning of Section~\ref{sec.symable.sharp}, considering a semi-discrete system with sharp low-pass filter emanating from a symmetrizable continuous system with symmetrizer \(S(\bU)\), one wishes to control  the energy functional \(\cF_s(\bU_N)= {\left( S(\bU_N)\Lambda^s\bU_N,\Lambda^s\bU_N\right)^{1/2}\approx\norm{\bU_N}_{H^s}}\), which in turn requires to control (uniformly with respect to \(N\)) the quantity $J_N(\bU_N,\Lambda^s\bU_N)$ where
	\[{J_N(\bU,\bV) \coloneq \left( \P_N \Big(S(\bU)(\Id-\P_N)\big(A_j(\bU)(\partial_{x_j}\P_N \bV )\big)\Big) , \bV\right)_{L^2}.}\]  
	In the specific case of the Saint-Venant system~\eqref{eq.SV} when \(d=1\), one has
	\[ \cF_s(\bU) = \int_{(2\pi\TT)} (\Lambda^s \eta)^2 + (1 + \eta)(\Lambda^s u)^2 \dd x\]
	and 
	\[	S(\bU)=\begin{pmatrix}
		1&0\\0&1+\eta
	\end{pmatrix}, \quad A(\bU)=\begin{pmatrix}
		u&1+\eta\\1&u
	\end{pmatrix}, \quad \bU=\begin{pmatrix}\eta \\ u\end{pmatrix}.\] 
	Let $\bU\coloneq (\eta_p,u_p)$ where $\eta_p(x) \coloneq -\frac{1}{2}\cos(px)$, $u_p(x)\coloneq\sin(px)$ and $\bV_N\coloneq(0,v_{N,q})$ where ${v_{N,q}(x)\coloneqq\sin((N-q)x)}$ with $0\leq q<p \ll N$. A direct calculation yields
	\begin{align*}
		(\Id-\P_N)\big(A(\bU)(\partial_{x}\P_N \bV_N )\big) &= \frac{N-q}{4}\begin{pmatrix} -\cos((N-q + p)x) \\ 2 \sin((N-q+p)x)  \end{pmatrix},
	\end{align*}
	so that
	\begin{equation*}
		\P_N \Big(S(\bU)(\Id-\P_N)\big(A(\bU)(\partial_{x_j}\P_N \bV_N )\big)\Big) = \frac{N-q}{8 }\begin{pmatrix} 0 \\ -\sin((N-q)x) \end{pmatrix} , 
	\end{equation*}
	and hence
	\begin{equation*}
		J_N(\bU,\bV_N) = -\frac\pi8(N-q).
	\end{equation*}
	This shows that one cannot propagate for positive time (at least in a direct manner) a uniform-in-$N$ control of the energy functional \(\cF_s(\bU_N)\) 
	for $\bU_N$ the solution emerging from initial data \(\bU_N^0\coloneq\bU+\bV_{N}/(N-q)^s\), despite the fact that $\bU_N^0\in\cU$ since \(1 + \eta_N^{0} \geq 1/2 >0\), and \(\left|\bU_N^0 \right|_{H^s} \approx 1\). 
	Notice also that \(\bU_N^0\notin \cU_{\sH} \) since \(  1 + \eta_{N}^{0}(\frac{\pi}{2p}) - |u_{N}^{0}(\frac{\pi}{2p})|^2 =0\), but one could enforce $\bU_N^0\in \cU_{\sH}$ while keeping valid all previous statements by considering {\em e.g.} $u_p(x)=\frac12\sin(px)$. In that case, the Hamiltonian structure allows to propagate the functional $\cF_{\sH,s}(\bU)\approx\norm{\bU_N}_{H^s}$ with  
	\[\cF_{\sH,s}(\bU)\coloneq \left(  S_\sH(\bU)\Lambda^s\bU,\Lambda^s\bU\right)^{1/2}= \int_{(2\pi\TT)} (\Lambda^s \eta)^2 + (1 + \eta)(\Lambda^s u)^2 +2u(\Lambda^s \eta)(\Lambda^s \eta)\dd x.\]

	\subsection{Numerical experiments in dimension one}\label{sec.SV-num-1}
	We seek numerical approximations to~\eqref{eq.SV} with $d=1$ (or, equivalently,~\eqref{eq.SV-2}), \(\eta_M, u_M\), in terms of finite Fourier sums of the form 
	\begin{equation*}
		f(x) = \sum_{k = -M+ 1}^M a_k \exp(\i k x),
	\end{equation*} 
	and similarly for \(u_M\).	The vectors \(\bm{\eta} = (\eta_M(x_1), \ldots, \eta_M(x_{2M}))\), \(\bu = (u_M(x_1), \ldots, u_M(x_{2M}))\) contain the values of \(\eta_M, u_M\) at regularly spaced collocation points \(x_n = -\pi + \pi n/M, n = 1, \ldots, 2M\). We use the discrete Fourier Transform, computed efficiently with a Fast Fourier transform (FFT), to find 
	\begin{equation*}
		\eta_M(x) = \sum_{k = -M+ 1}^M \hat{\bm{\eta}}_k \exp(\i k x), \quad u_M(x) = \sum_{k = -M+ 1}^M \hat{\bu}_k \exp(\i k x), 
	\end{equation*} 
	where \(\hat{\bm{\eta}}= (\hat{\bm{\eta}}_{-M+ 1}, \ldots,  \hat{\bm{\eta}}_{M}), \hat{\bu} = (\hat{\bu}_{-M+1}, \ldots, \hat{\bu}_{M})\) are the coefficients of the Fast Fourier transform of \(\bm{\eta}, \bu\), and \(\hat{\bm{k}} = ( k/\pi\ : \ k = -M+1, \ldots, M)\) are the discrete Fourier modes. Abusing notation, we will incorrectly refer to \(\hat{\bm{\eta}}_k, \hat{\bu}_k\) as Fourier coefficients (they are related to the coefficients \(c_j\) of infinite Fourier series $		f(x) = \sum_{j\in \ZZ} c_j \exp(\i j x)$ through \(\hat{\bm{f}}_k = \sum_{j\in \ZZ} c_{k + 2jM}\)). For functions \(f\in H^s(2\pi\TT)\), the error due to this aliasing effect is of order \(\cO(M^{-s})\). 
	
	Spatial differentiation is now obtained by multiplying the Fourier coefficients with \(\i k\). Nonlinear operations are computed pointwise on collocation points \(x_n\), via inverse Fast Fourier transform. This procedure leads, in general, to aliasing errors. For polynomial nonlinearities (such as for the Saint-Venant system), one can use so-called dealiasing techniques to remove these errors. For quadratic nonlinearities, one may for example use Orszag's \(3/2\)-rule~\cite{Orszag71}, which consists in adding a sufficient number of Fourier modes with coefficients set to zero. For more information on spectral methods and dealiasing techniques, we refer to~\cite{CanutoHussainiQuarteroniEtAl06} and~\cite{Trefethen00}.

	In our numerical codes, to remove aliasing errors from the nonlinear terms while still working with vectors \(\hat{\bm\eta}, \hat{\bu}\) of fixed length, we shall set the highest \(1/3\) of the Fourier modes to zero. As we numerically compute approximate solutions of the semi-discretized equations~\eqref{eq.SV-smooth} and~\eqref{eq.SV-sharp}, this procedure is naturally performed when applying sharp or smooth low-pass filters, \(\P_{N}, \S_{N}\), with $N<2M/3$. For the smooth low-pass filter, we use the example from the introduction, that is \(\S_N = \Diag(S_N(D))\) with \(S_N(\cdot) = S(\cdot/N)\) and \(S(\cdot) = \max(0,\min(1, 2- 2 \left|\cdot \right|))^2\). For the discretization with the smooth low-pass filter, we shall consider only the version~\eqref{eq.SV-smooth} where the low-pass filter is only applied to nonlinear terms. 

	This procedure of semi-discretization in space yields a system of differential equations in time for the Fourier coefficients \(\hat{\bm{\eta}}, \hat{\bu}\). We approximately solve this initial-value problem using an explicit Runge-Kutta 4 method. All numerical simulations are made using the Julia package WaterWaves1D~\cite{DucheneNavaro} and can be reproduced using the scripts available at
	\url{WaterWaves1D.jl/examples/StudySaintVenant.jl}.

	From now on, we denote the number of collocation points by \(2M\), and let \(N=\lfloor 2M/3 \rfloor\), that is, the greatest integer smaller than \(2M/3\). Our numerical scheme maintains the highest \(1/3\) of the Fourier modes to zero at each time-step, see the discussion above. Abusing notation, we will refer to the fully-discretized numerical solution as \(\bU_N = (\eta_N, u_N)\), since only \(2N\) Fourier modes are nonzero. This convention means that \(N\) plays the same role in this section as in the previous, analytical sections. We will compute the numerical solution with \(2M= 2^j, j = 6,\ldots, 15\) collocation points and use time step \(dt = 10^{-5}\). The time step is an order of magnitude smaller than needed to avoid stability issues, and small enough to ensure the error due to the spatial discretization dominates. 
	We will use the solution computed with \(2M=2^{15}\) and sharp low-pass filter as a reference solution \(\bU_\r = (\eta_\r, u_\r)\), and compute the relative error of the numerical solutions \(\bU_N = (\eta_N, u_N)\) by comparing with the reference solution: 
	\[E_s(\bU_N) = \frac{\left|\bU_N - \bU_\r \right|_{H^s}}{\left|\bU_\r \right|_{H^s}}.\]
 	The norms will be computed {approximately} using the Fourier coefficients of the numerical solutions.

	We solve numerically the Saint-Venant system~\eqref{lem.SV} in one spatial dimension. For the tested initial data in \(\cU_{\sH}\), numerical results are in agreement with the analysis. To study the experimental convergence, we consider the following initial data for \(\alpha >0\),
	\begin{equation}\label{eq.init1}
			\eta^{0}(x) = \frac{1}{2}\exp(-\left|x \right|^{\alpha})\exp(-4x^2), \qquad	u^{0}(x) = 0.
	\end{equation}
	Notice \(\bU^0 = (\eta^{0}, u^{0})\) satisfies both \(1 + \eta^{0} >0\) and the stricter condition \(1+ \eta^{0} - (u^{0})^2>0\). The initial surface is a heap of water situated at the origin. Both \(\eta^{0}, u^{0}\) decay to machine precision near \(-\pi, \pi\) and can therefore be seen as periodic. Moreover, \(\bU^0\in H^{\alpha + 1/2}(2\pi\TT)^2\). We let \(\alpha= 1.5\) and simulate the time-evolution up to a final time \(T= 0.5\) with either sharp or smooth lowpass filters applied to the nonlinear terms. A plot of the initial data as well as the decay of its Fourier coefficients (through $E^s(\bU_N)\vert_{t=0}=\frac{\left|(\Id-\P_N)\bU_\r\vert_{t=0}\right|_{H^s}}{\left|\bU_\r\vert_{t=0} \right|_{H^s}}$) is shown in Figure~\ref{fig.init1}. 

	\begin{figure}[tbp]
		\centering
		\subfloat[Plot of the initial data~\eqref{eq.init1}, where \(\eta^{0}\) is the initial surface profile, and \(u^{0}\) is the initial velocity.]{\label{fig:a}\includegraphics[width=0.45\linewidth]{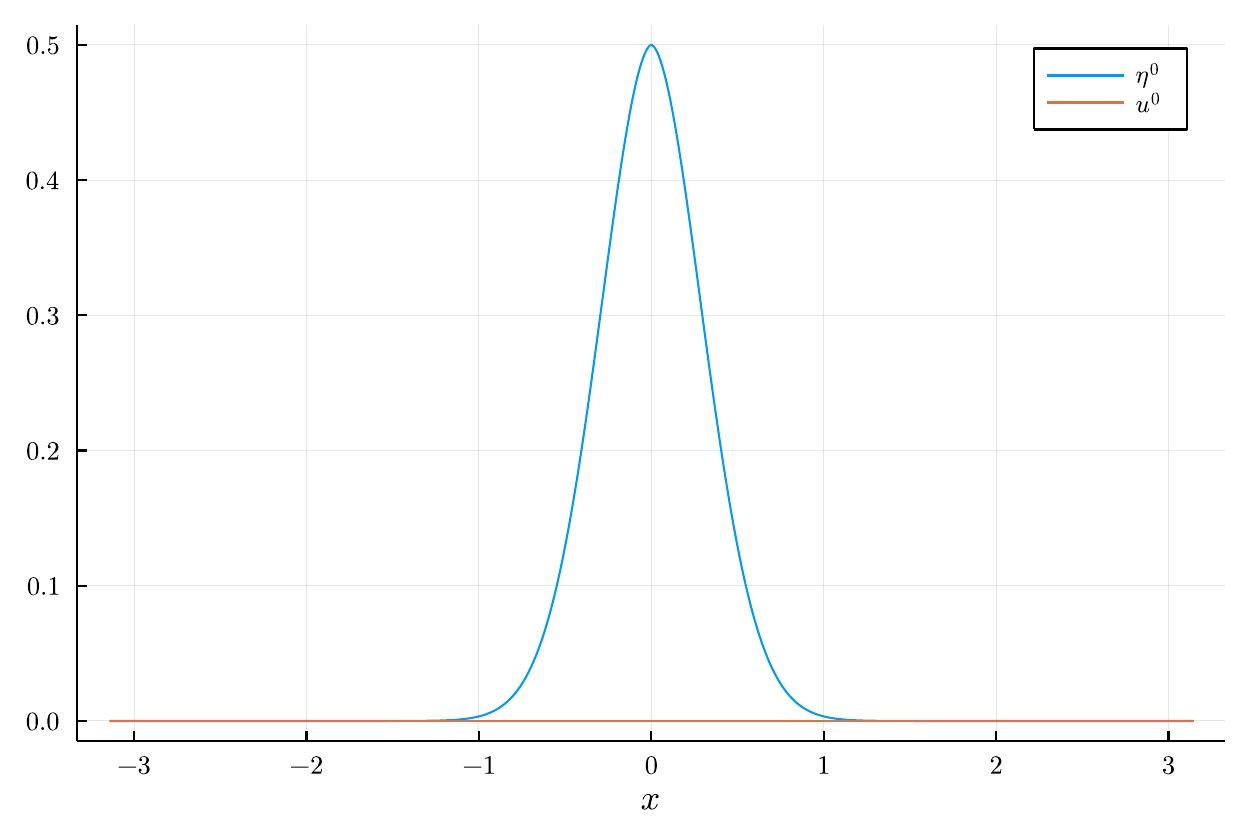}}\qquad
		\subfloat[Decay of the Fourier coefficients for the initial data~\eqref{eq.init1}. The blue and orange points show \(E_s(\bU_N)\vert_{t=0}\) for \(s=0, 1\) respectively. To illustrate, the blue and orange lines have slopes \(-2\) and \(-1\) respectively.]{\label{fig:b}\includegraphics[width=0.45\linewidth]{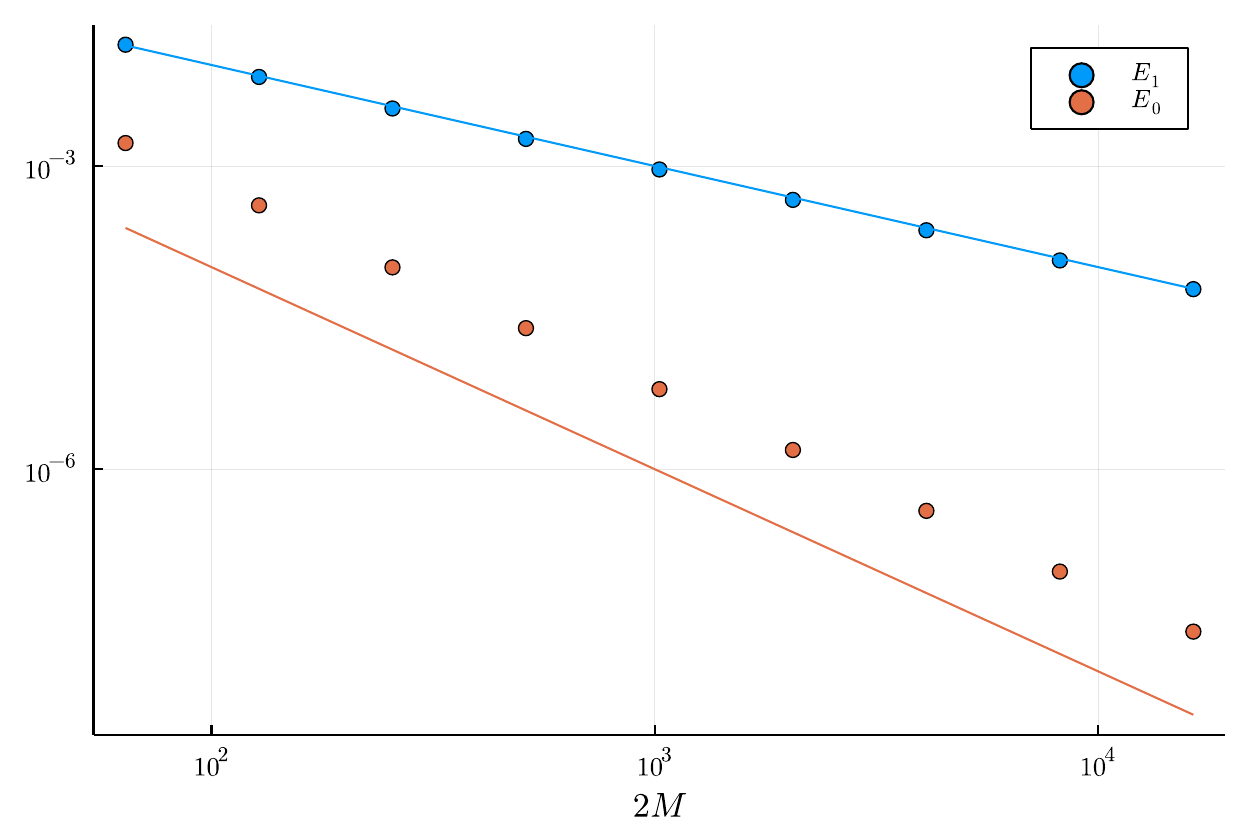}}\\
		\caption{Experiments with initial data~\eqref{eq.init1}.}
		\label{fig.init1}
	\end{figure}

	Figure~\ref{fig.num1} show log-log plots of the error \(E_s\) at time \(T= 0.5\) for the numerical solution with \(2M = 2^j\) where \(j= 6, \ldots, 14\) computed using sharp and smooth low-pass filters. Figure~\ref{fig.num1} shows the relative error measured in the \(L^2\)-norm, \(E_0(\bU_N)\) and in the \(H^1\)-norm, \(E_1(\bU_N)\). 
	
	\begin{figure}[tbp]
		\centering
		\subfloat[Plot illustrating the convergence of the numerical schemes~\eqref{eq.SV-smooth} and~\eqref{eq.SV-sharp} as the number of collocation points \(2M\) increases. The plot shows the relative error of the numerical solution for initial data~\eqref{eq.init1} in \(H^2(2\pi\TT)^2\) measured in the \(L^2\)-norm, \(E_0\) and in the \(H^1\)-norm, \(E_1\) for \(2M = 2^j, j= 6,\ldots, 14\) when using either sharp or smooth low-pass filters. To illustrate, the blue and orange lines have slopes \(-2\) and \(-1\) respectively.  The numerical scheme exhibits spectral convergence with both sharp and smooth low-pass filters.]{\label{fig.num1}\includegraphics[width=0.45\linewidth]{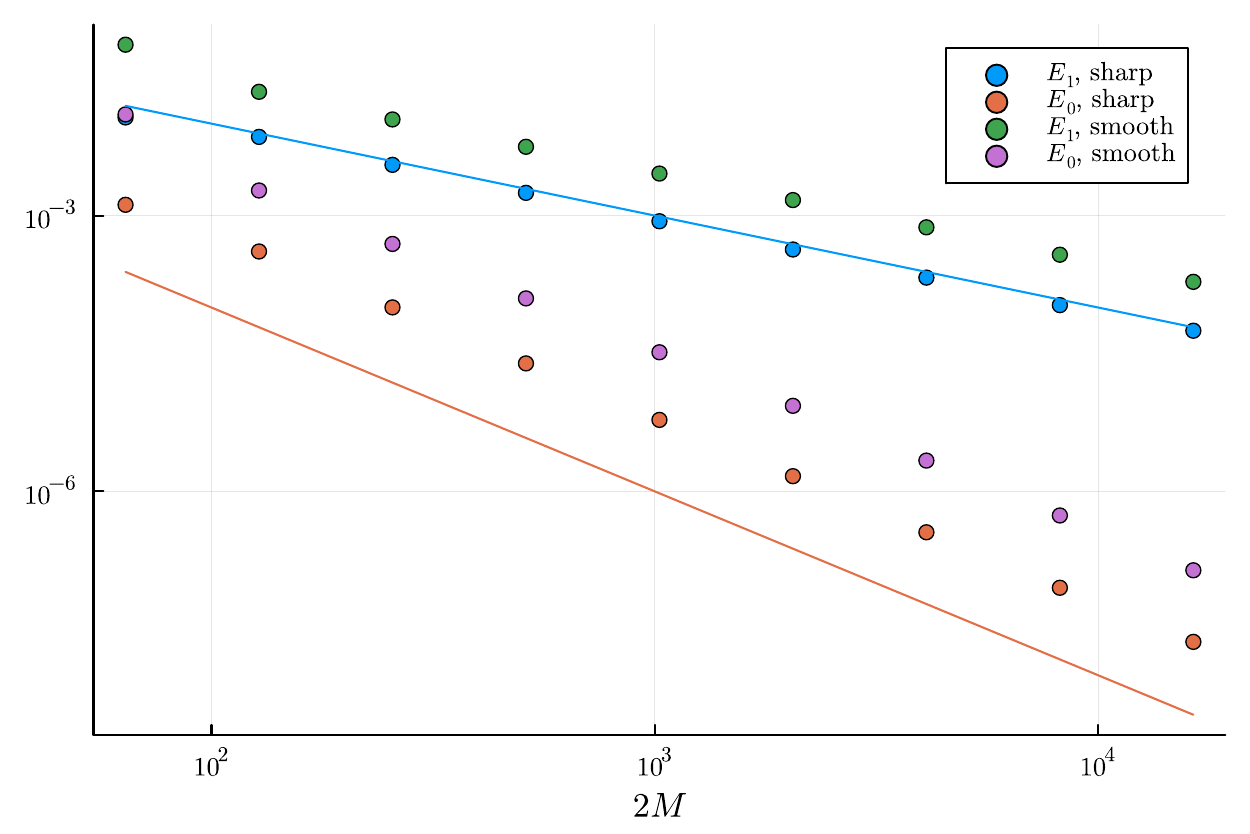}}\qquad
		\subfloat[Experimental order of convergence for the numerical solution with initial data~\eqref{eq.init1}  for both sharp and smooth low-pass filters. The Experimental order of convergence is measured in the \(L^2\)-norm, \(EOC_0\), and in the \(H^1\)-norm, \(EOC_1\).]{\small \begin{tabular}{|c|c|c|c|c|}
			\hline
			 & \multicolumn{2}{|>{\columncolor{lightgray}}c|}{Sharp low-pass filter} & \multicolumn{2}{|>{\columncolor{lightgray}}c|}{Smooth low-pass filter} \\
			\hline
			\(2M\)& \(EOC_0\) & \(EOC_1\) & \(EOC_0\) & \(EOC_1\) \\
			\hline
			\(2^6\) & \(1.69\) & \(0.7\) & \(2.75\) & \(1.7\) \\
			\hline
			\(2^7\) & \(2.02\) & \(1.01\) & \(1.93\) & \(1.00\) \\
			\hline
			\(2^8\) & \(2.02\) & \(1.01\) & \(1.97\) & \(0.99\) \\
			\hline
			\(2^9\) & \(2.04\) & \(1.03\) & \(1.95\) & \(0.97\) \\
			\hline
			\(2^{10}\) &  \(2.03\) & \(1.02\) & \(1.94\) & \(0.96\) \\
			\hline
			\(2^{11}\) & \(2.03\) & \(1.02\) & \(1.98\) & \(0.98\) \\
			\hline
			\(2^{12}\) & \(2.01\) & \(0.99\) & \(1.98\) & \(0.99\) \\
			\hline
			\(2^{13}\) & \(1.95\) & \(0.92\) & \(1.98\) & \(0.98\) \\
			\hline
		\end{tabular}\label{tab.num1}
		\newline \newline}
		\caption{Experiments with initial data~\eqref{eq.init1}.}
	\end{figure}
	
	The experimental order of convergence, given by
	\begin{equation*}
		EOC_{s} = \frac{\log(E_s(\bU_N)/E_s(\bU_{2N}))}{\log(2)},
	\end{equation*}
	is given in Figure~\ref{tab.num1}. With the initial data in \(H^2(2\pi\TT)^d\), we expect from our analysis that the \(L^2\)-error should decay as \(\cO(N^{-2})\) and the \(H^1\)-error should decay as \(\cO(N^{-1})\) when using both the sharp and smooth low-pass filter. This aligns with our numerical results. The absolute error is slightly larger when using the smooth low-pass filter, which is to be expected since applying \(\S_N\) removes more information at each time step than does \(\P_N\). Taking as initial data~\eqref{eq.init1} with other values of \(\alpha\) (we have tested \(\alpha=1,2.5,3\)) also yields the expected results.

Let us now consider initial data $(\eta^{0},u^{0})\in\cU\setminus\cU_\sH$, that is satisfying \(1 + \eta^{0}>0\), but \emph{not} the stricter condition \(1+ \eta^{0} - (u^{0})^2>0\). In particular, we will consider the initial data from the example at the end of Section~\ref{sec.SV_analysis}, with \(q=0,p=1, s= 2\):
\begin{equation}\label{eq.init2}
	\bU_N^0=(\eta^{0},u_N^{0}), \quad \eta_N^{0}(x)=-\frac{1}{2}\cos(x), \quad u_N^{0}(x)=\sin(x)+\frac{\sin(Nx)}{N^2}
\end{equation}
Notice \(f(x) = 1- \frac{1}{2}\cos(x) - \sin^2(x)<0\) for \(x\in (-\frac{\pi}{2}, -\frac{\pi}{3})\cup(\frac{\pi}{3},\frac{\pi}{2})\). On the other hand, \(1 + \eta^{0}(x)>0\) for all \(x\in 2\pi\TT\). We therefore expect the numerical scheme to converge when using the smooth low-pass filter, but that  instabilities may emerge when discretizing with the sharp low-pass filter. Convergence plots for the numerical solutions in both cases are shown in Figure~\ref{fig.num2}. In line with the analysis, we have convergence when using the smooth low-pass filter. However, this is also true for the sharp low-pass filter. {\em Despite many attempts, we have not been able to observe numerical instabilities for initial data we have tested satisfying \(1 + \eta^{0}>0\) but violating \(1+ \eta^{0} - (u^{0})^2>0\). }

\begin{figure}[tbp]
	\centering
	\subfloat[Plot of the initial data~\eqref{eq.init2}, where \(\eta^{0}\) is the initial surface profile, and \(u^{0}\) is the initial velocity.]{\label{fig:init2}\includegraphics[width=0.45\linewidth]{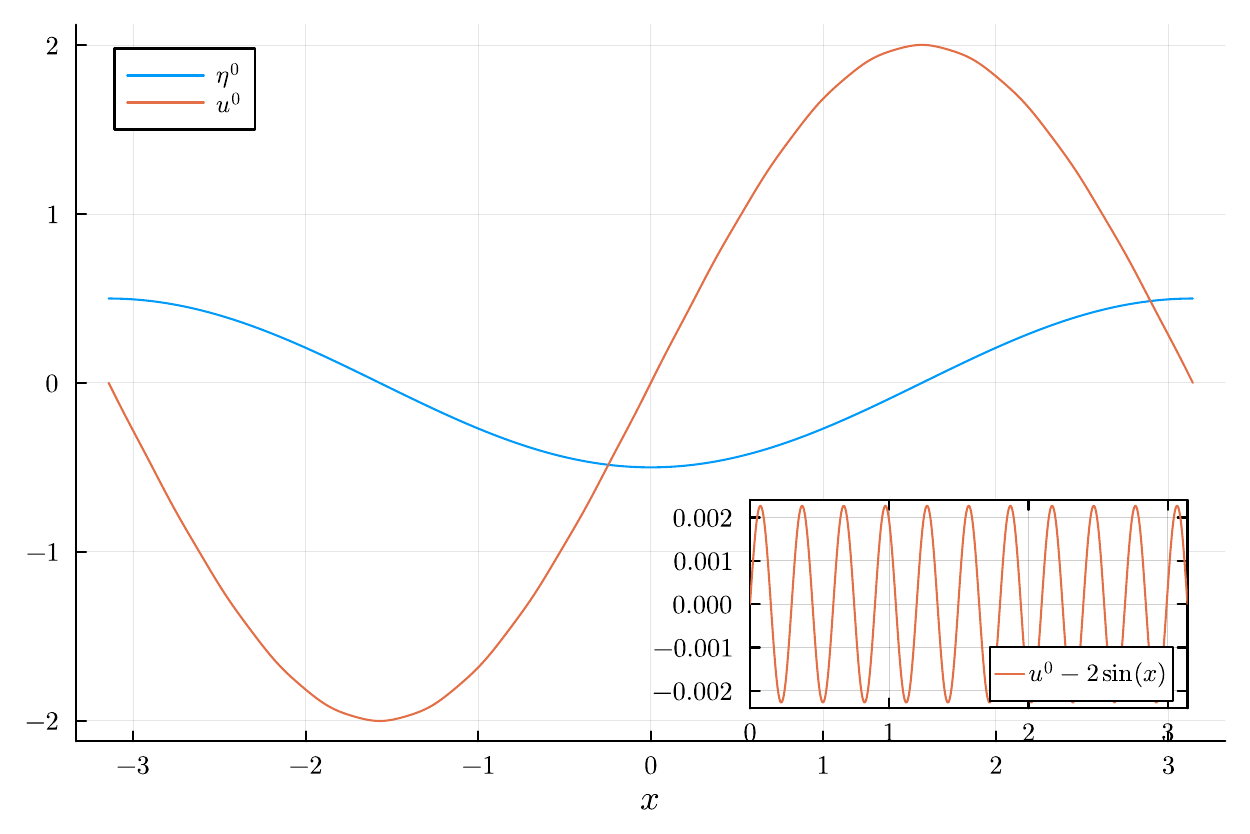}}\qquad
	\subfloat[Plot illustrating the convergence of the numerical schemes~\eqref{eq.SV-smooth} and~\eqref{eq.SV-sharp} as the number of collocation points \(2M\) increases. The plot shows the relative error of the numerical solution measured in the \(L^2\)-norm, \(E_0\) and in the \(H^1\)-norm, \(E_1\) for \(2M = 2^j, j= 6,\ldots, 14\) when using either sharp or smooth low-pass filters. To illustrate, the blue and orange lines have slopes \(-2\) and \(-1\) respectively. The numerical scheme exhibits spectral convergence with both sharp and smooth low-pass filters.]{\label{fig.num2}\includegraphics[width=0.45\linewidth]{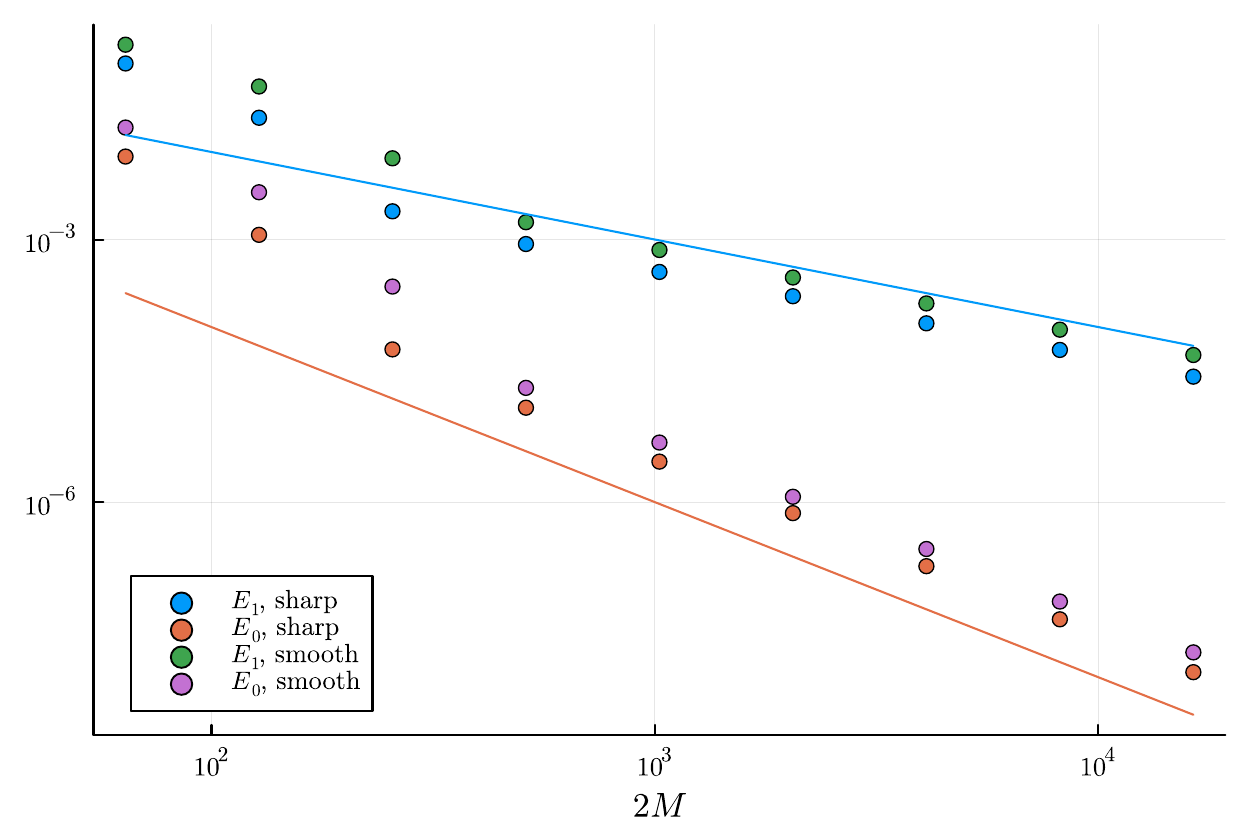}}\\
	\caption{Experiments with initial data~\eqref{eq.init2}.}
\end{figure}

Interestingly, we do observe a difference between sharp and smooth low-pass filters when (barely) violating the non-cavitation assumption \(1 + \eta^{0} >0\). Let 
\begin{equation}\label{eq.init_zero_depth}
	\eta_{0,N}(x) = -\cos(x), \quad u_{0,N}(x) = \sin(x) + \frac{\sin(Nx)}{N^2}.
\end{equation}
Figure~\ref{fig.zerodepth} shows the second derivative of the velocity at time \(T= 0.1\) for smooth and sharp low-pass filters for \(2M = 2^{10}\) and \(2M = 2^{12}\). The second derivative is uniformly bounded when using the smooth low-pass filter whereas it is not around the point \(x=0\) where the non-cavitation assumption is violated when using the sharp low-pass filter.

\begin{figure}[tbp]
	\centering
	\subfloat[Plot of the second derivative of the velocity \(u_N\) at time \(T=0.1\) with initial data~\eqref{eq.init_zero_depth} with \(2M = 2^{10}\). The orange and blue lines show the solutions found by using the the numerical schemes~\eqref{eq.SV-smooth} and~\eqref{eq.SV-sharp} with smooth and sharp low-pass filters respectively.]{\label{fig:zd10}\includegraphics[width=0.45\linewidth]{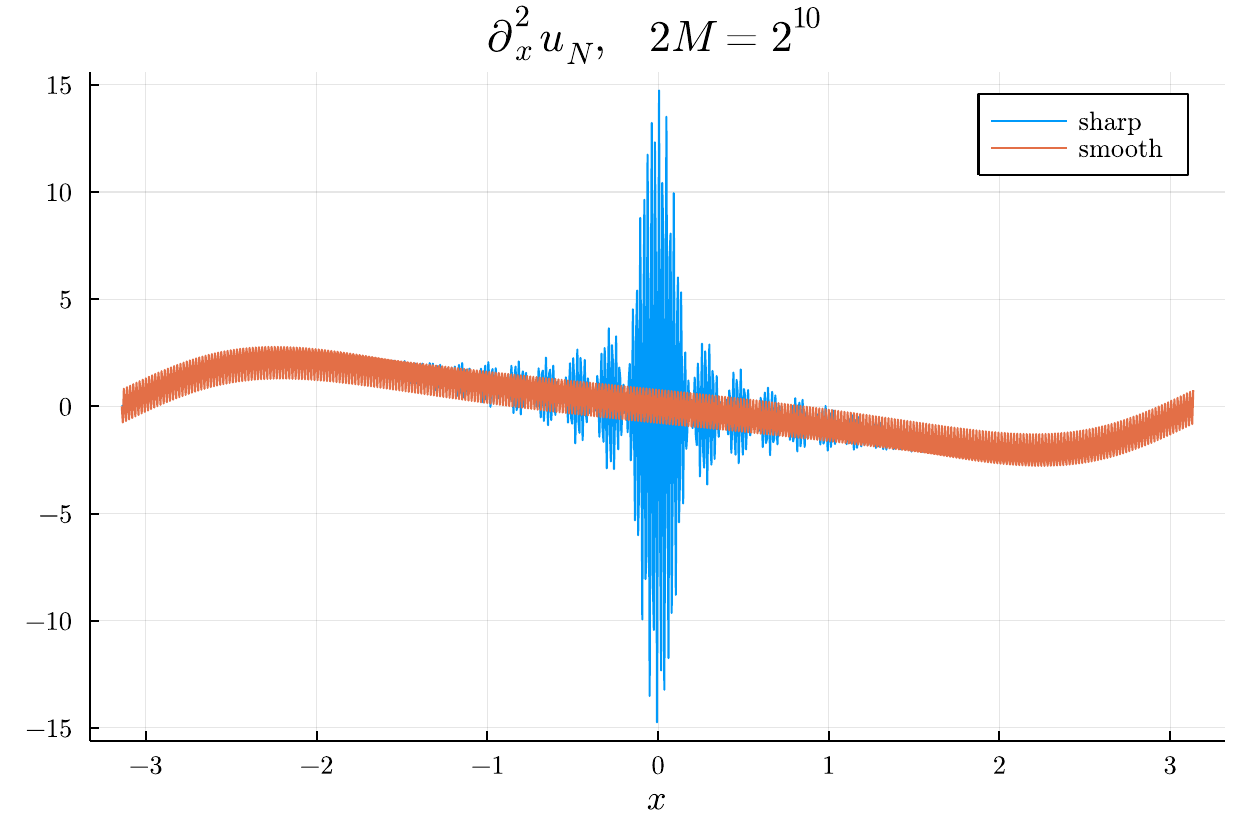}}\qquad
	\subfloat[Plot of the second derivative of the velocity \(u_N\) at time \(T=0.1\) with initial data~\eqref{eq.init_zero_depth} with \(2M = 2^{12}\). The orange and blue lines show the solutions found by using the numerical schemes~\eqref{eq.SV-smooth} and~\eqref{eq.SV-sharp} with smooth and sharp low-pass filters respectively.]{\label{fig:zd12}\includegraphics[width=0.45\linewidth]{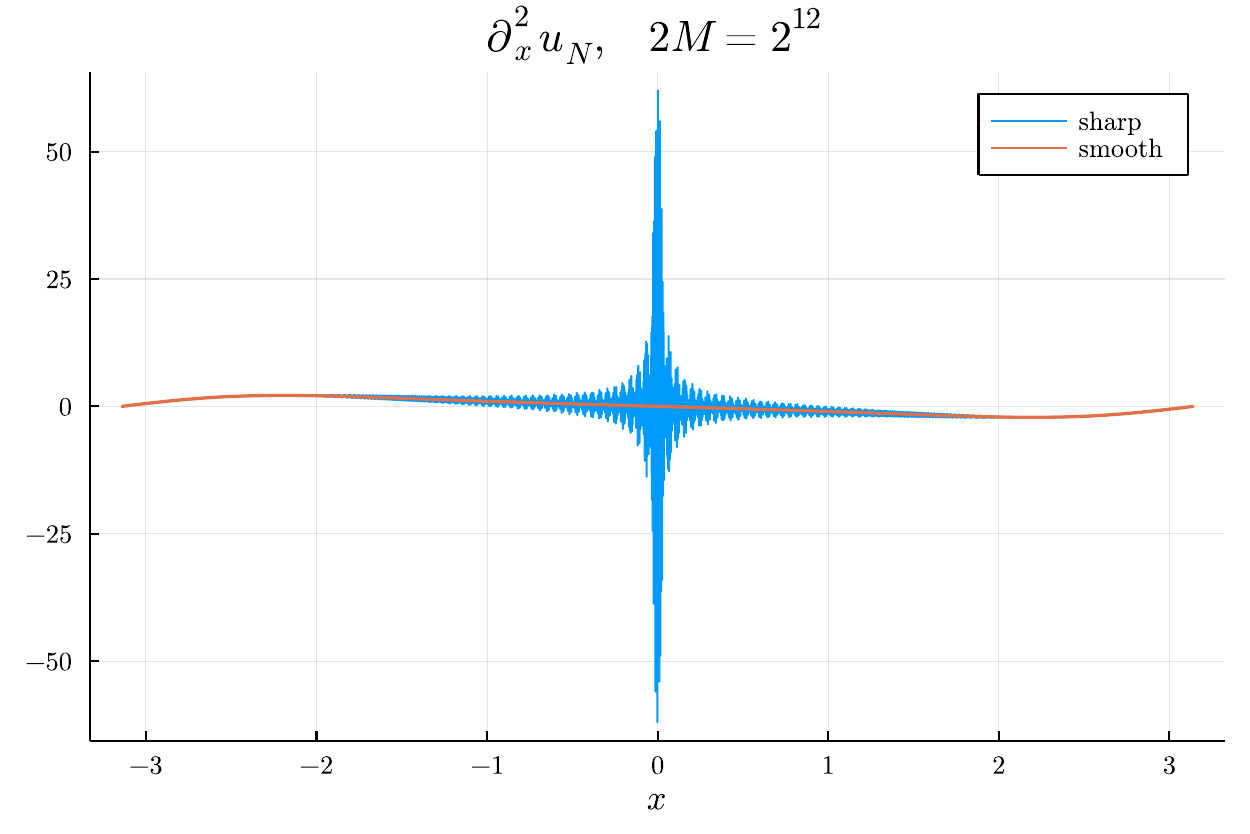}}\\
	\caption{Experiments with initial data~\eqref{eq.init_zero_depth}.}
	\label{fig.zerodepth}
\end{figure}

\subsection{Numerical experiments in dimension two}\label{sec.SV-num-2}
The numerical simulations of the Saint-Venant system with \(d=2\) are analogous to the case when \(d=1\) and have been executed using the same Julia package WaterWaves1D~\cite{DucheneNavaro}. They are also reproducible using the scripts available at \url{WaterWaves1D.jl/examples/StudySaintVenant.jl}. We let \(2M\) denote the number of collocation points in each of the two spatial dimensions, \(x\) and \(y\), which form a grid with \(4M^{2}\) collocation points. As in the previous section, we set \(N = \lfloor 2M/3 \rfloor\) and let \(\P_N, \S_N\) be as described in the introduction. In particular, notice \(\S_N\) is now a composition of one-dimensional low-pass filters: \(\S_N \coloneq \Diag(S_N(D))\) with \(S_N((k_1,k_2)) \coloneq S(k_1/N) S(k_2/N)\), where we set \(S(\cdot) \coloneq \max\big(0,\min\big(1,2-2|\cdot|\big)\big)^2\). We denote numerical approximations by \(\eta_N, u_N, v_N\). The values at collocation points \(\bm\eta, \bu, \bv\) and associated discrete Fourier modes \(\bm\hat{\bm\eta}, \hat\bu, \hat\bv\) are $2M$-by-$2M$ matrices.

Our main interest in studying numerically the systems in two dimensions is to examine whether we observe any difference between the Hamiltonian and non-Hamiltonian version of the Saint-Venant system, respectively~\eqref{eq.SV-2} and~\eqref{eq.SV}. While there is indeed a difference with respect to stability of the numerical schemes ---we observe instabilities with the sharp low-pass filter when violating \(1 + \eta^{0} - (u^{0})^2 - (v^{0})^2>0\) for~\eqref{eq.SV-2}, but not for~\eqref{eq.SV}--- this can be explained by the difference between the hyperbolicity domains $\cU_\sH$ and $\cU$ rather than by the presence or absence of a Hamiltonian structure. 
Because the setting of dimension $d=2$ is computationally costlier, we use timestep \(dt = 5\times 10^{-4}\) in our numerical experiments and, when calculating relative errors, we take as a reference solution the numerical solution computed with \(2M = 2^{10}\). 

We test the numerical method on initial data in \(H^s((2\pi\TT)^2)^3\) of the form 
\begin{equation}\label{eq.init2D}
	\begin{split}
		\eta^{0}(x,y) &= (h_0 - 1)\cos(x)\cos(y),\\
		u^{0}(x,y) &= u_{\l} \sin(x)\cos(y) + u_{\h}\frac{\sin(Nx)\cos(Ny)}{N^s},\\
		v^{0}(x,y) &= v_{\l} \cos(x)\sin(y) + v_{\h}\frac{\cos(Nx)\sin(Ny)}{N^s},
	\end{split}
\end{equation}
where \(h_0>0, u_{\l}, v_{\l}, s\geq0\) are real numbers. When \(u_{\l}= v_{\l}\) and \(u_{\h}= v_{\h}\), the initial data is irrotational, and systems~\eqref{eq.SV} and~\eqref{eq.SV-2} are equivalent.

For the standard, non-Hamiltonian Saint-Venant system~\eqref{eq.SV} our numerical results when \(d=2\) align with the numerical results when \(d=1\). That is, the numerical approximation converges with order \(s\) for tested initial data in \(H^s(2\pi\TT)\) as long as \(1+\eta^{0}>0\) when using both the smooth and sharp low-pass filters. Figure~\ref{fig.2DnonHam_conv} shows a log-log plot of the relative error measured in the \(L^2\)-norm, \(E_0(\bU_N)\) and in the \(H^1\)-norm, \(E_1(\bU_N)\), at time \(T= 0.1\) for \(2M = 2^j, j=5,\ldots,9\) and \(N= \lfloor 2M/3 \rfloor\) with initial data~\eqref{eq.init2D} with \(h_0=0.5, u_{\l} =-v_{\l} = 0.5, u_{\h} = -v_{\h} = 1\) and \(s=2\). Analogous results also hold for other values of \(s\) (we have tested \(s=2.5, 3\)). 
\begin{figure}[tbp]
	\centering
	\subfloat[System~\eqref{eq.SV}]{
		\label{fig.2DnonHam_conv}\includegraphics[width=0.5\linewidth]{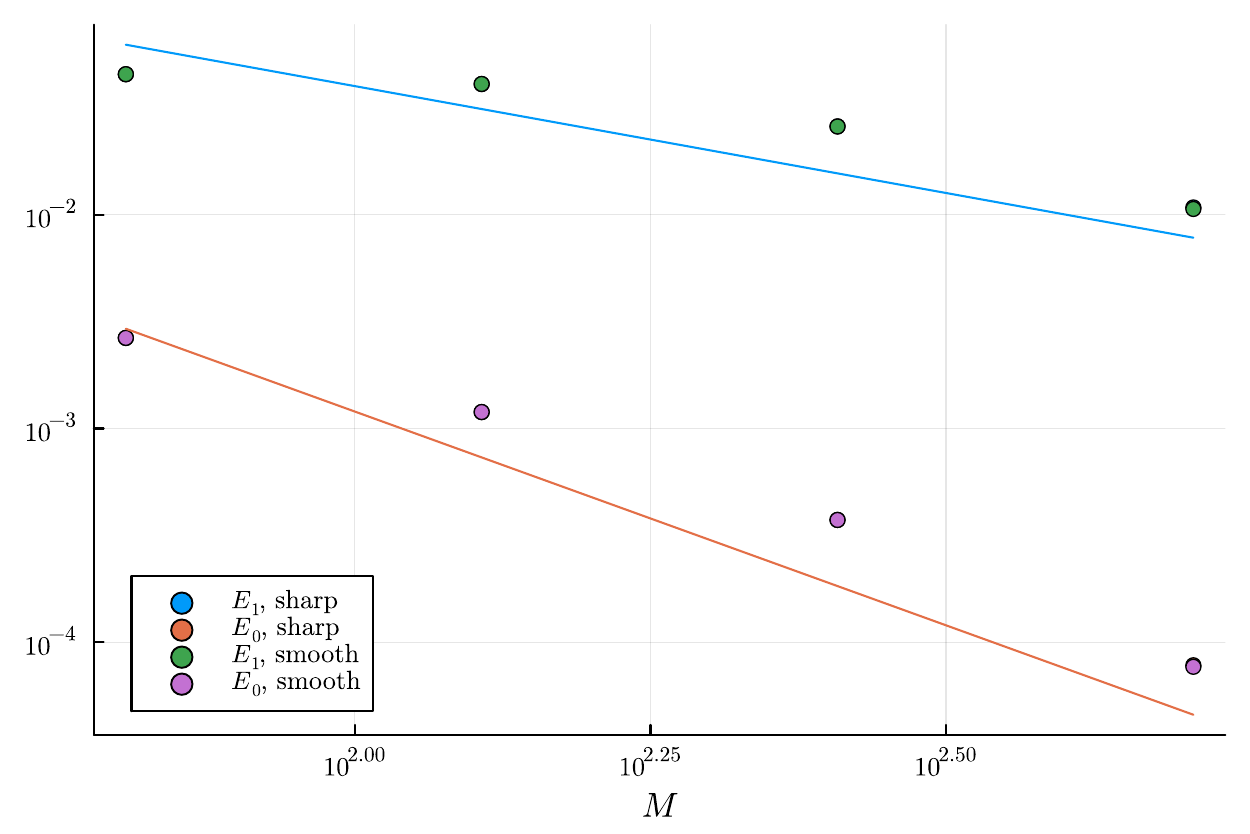}}
	\subfloat[System~\eqref{eq.SV-2}]{	\label{fig.2DHam_conv}\includegraphics[width=0.5\linewidth]{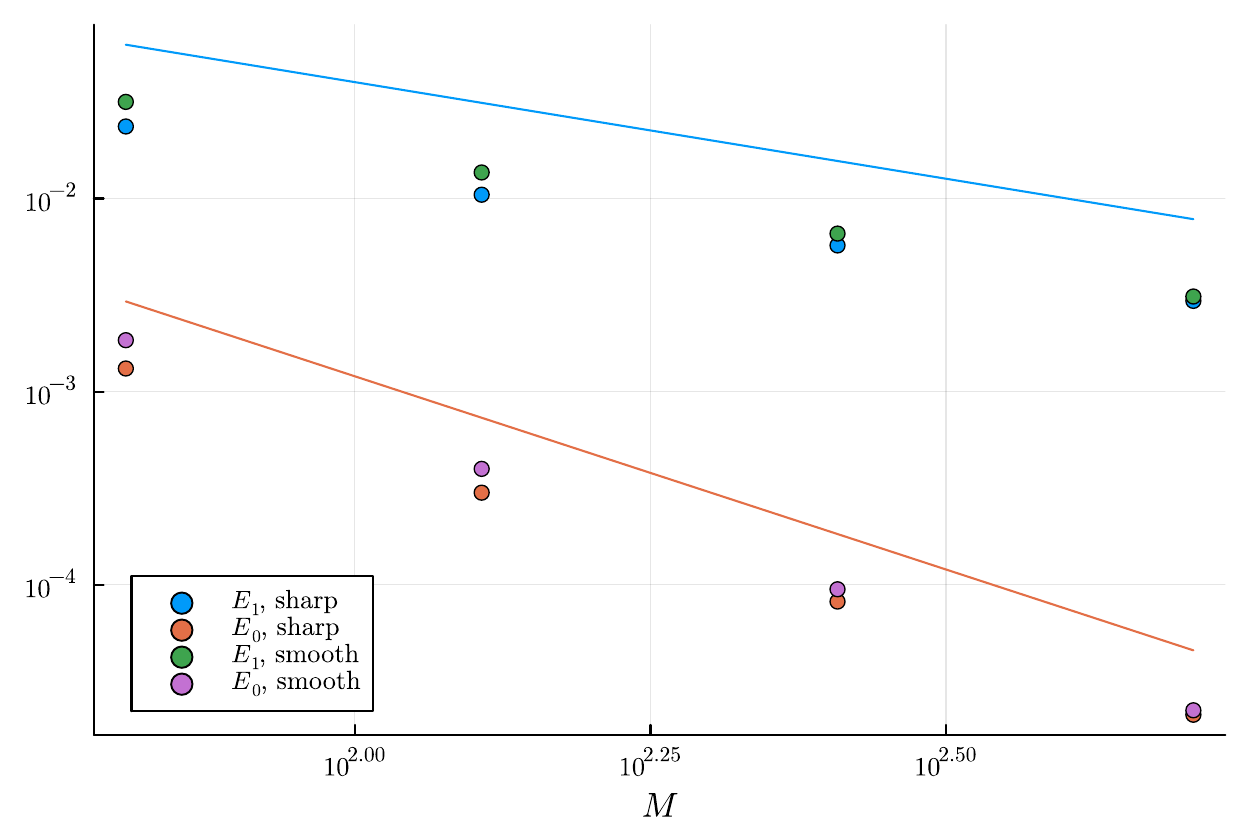}}
	\caption{Plot illustrating the convergence of the numerical schemes~\eqref{eq.SV-sharp} and~\eqref{eq.SV-smooth} for the systems~\eqref{eq.SV} (in the left) and~\eqref{eq.SV-2} (in the right) in two spatial dimensions as the number of collocation points \(2M\) increases. The plot shows the relative error of the numerical solution for initial data~\eqref{eq.init2D} with \(h_0=0.5, u_{\l} =- v_{\l} = 0.5, u_{\h} = -v_{\h} = 1\) and \(s=2\) at time \(T=0.1\). The initial data is in in \(H^2((2\pi\TT)^2)^3\) and the relative error is measured in the \(L^2\)-norm, \(E_0\) and in the \(H^1\)-norm, \(E_1\) for \(2M = 2^j, j= 6,\ldots, 9\) when using either sharp or smooth low-pass filters. To illustrate, the blue and orange lines have slopes \(-2\) and \(-1\) respectively. The numerical scheme exhibits spectral convergence with both sharp and smooth low-pass filters, for both systems.}
\end{figure}

We have not observed any instabilities in the numerical approximation of system~\eqref{eq.SV} due to the use of sharp low-pass filter  for any of the initial data we tested satisfying \(1+\eta^{0}>0\). Just as in the case of dimension one, we observe instabilities when violating the non-cavitation assumption \(1+\eta^{0}>0\) for the sharp low-pass filter, but not the smooth low-pass filter, see Figure~\ref{fig.2D_zerodepth}.

\begin{figure}[tbp]
	\centering
	\subfloat[Solution computed with the sharp low-pass filter and \({2M = 2^{9}}\).]{\label{fig:2Dzd9_sharp}\includegraphics[width=0.45\linewidth]{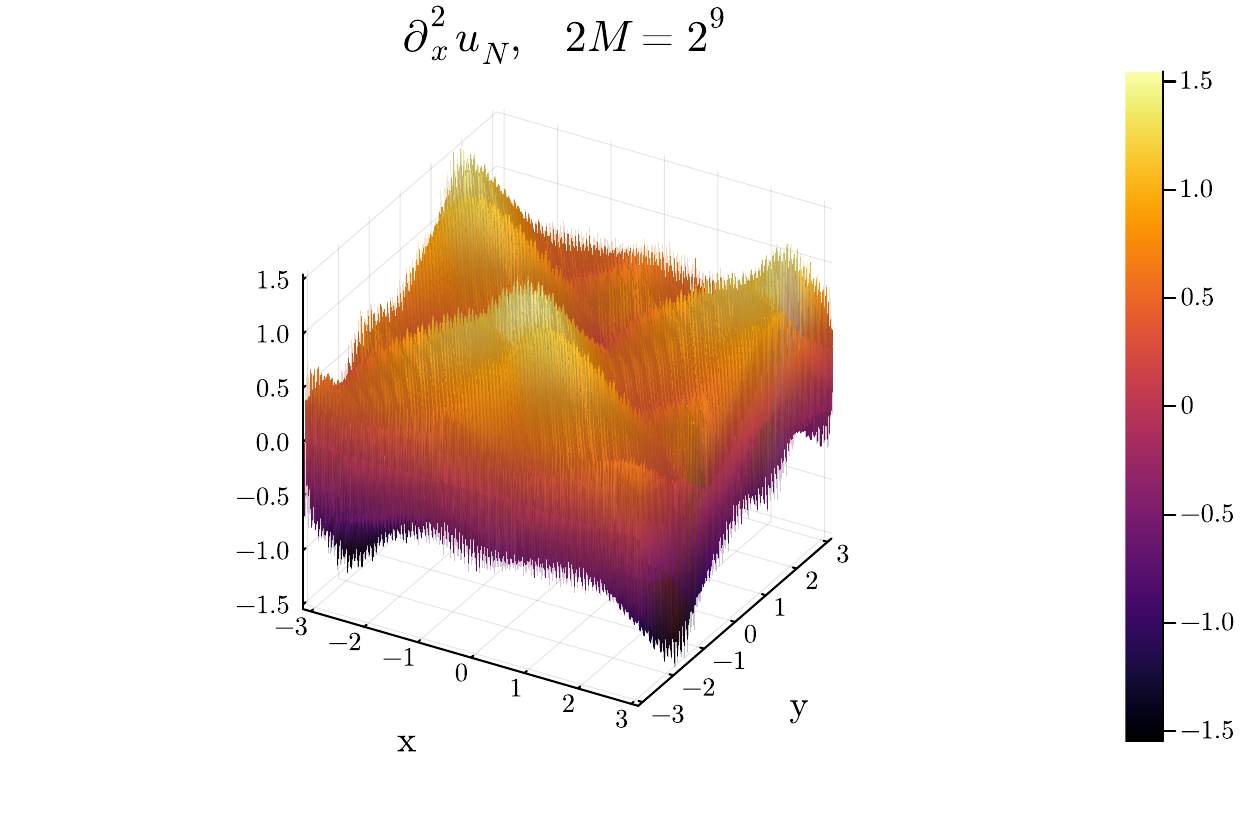}}\qquad
	\subfloat[Solution computed with the smooth low-pass filter and \({2M = 2^{9}}\).]{\label{fig:2Dzd9_smooth}\includegraphics[width=0.45\linewidth]{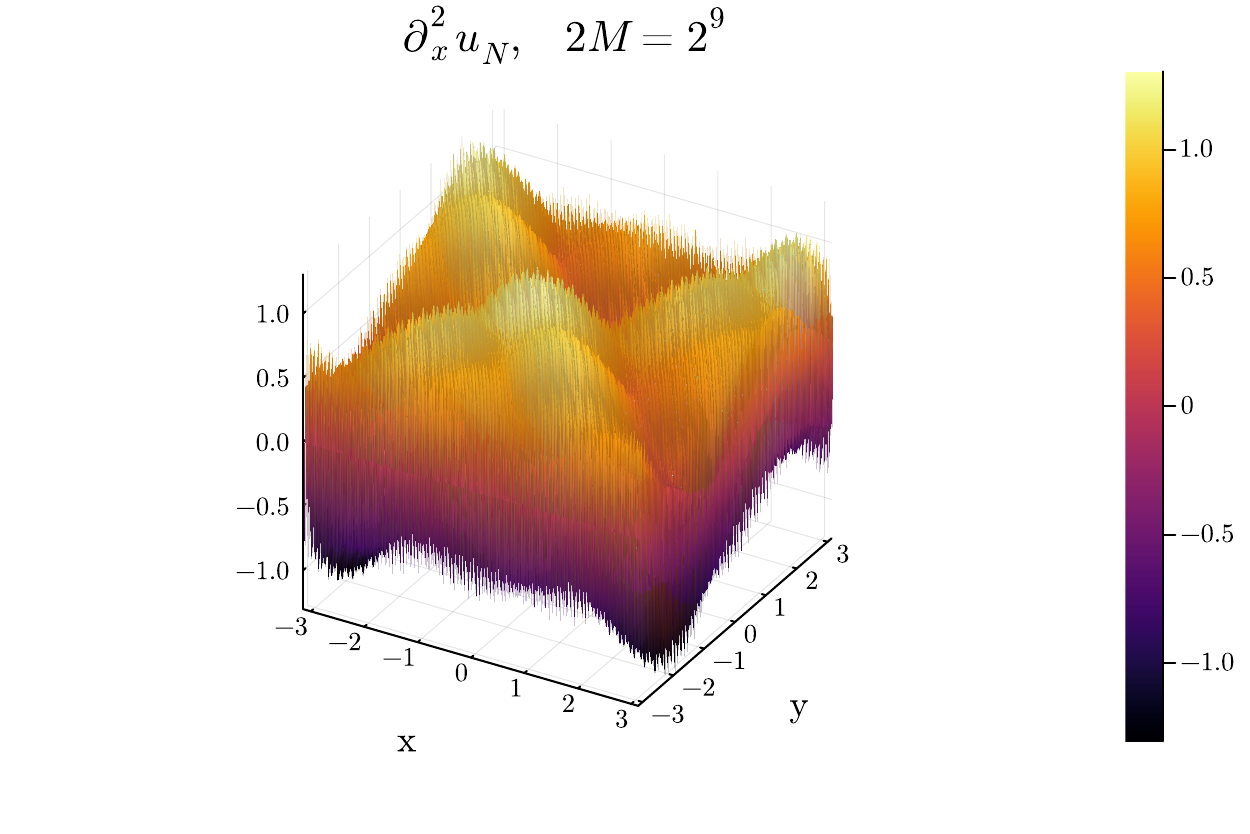}}\\
	\subfloat[Solution computed with the sharp low-pass filter and \({2M = 2^{10}}\).]{\label{fig:2Dzd10_sharp}\includegraphics[width=0.45\linewidth]{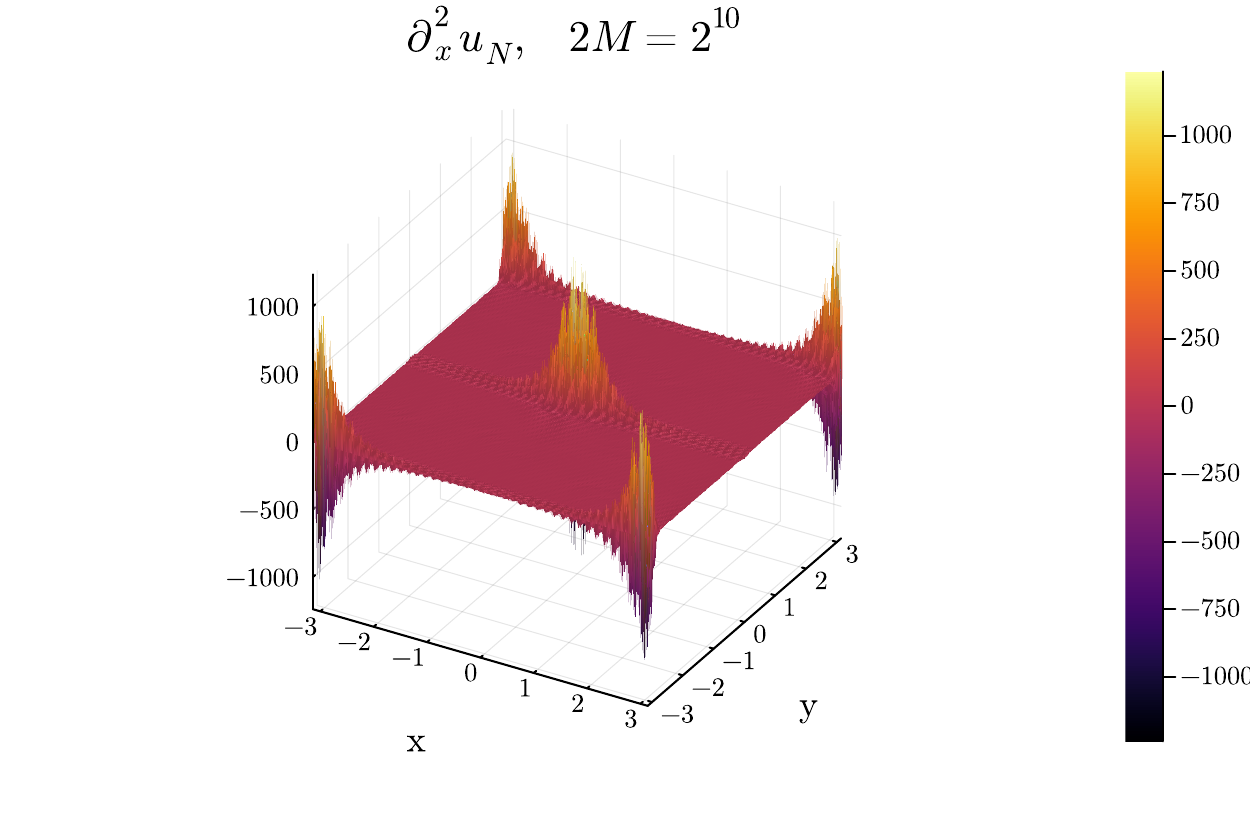}}\qquad
	\subfloat[Solution computed with the smooth low-pass filter and \({2M = 2^{10}}\).]{\label{fig:2Dzd10_smooth}\includegraphics[width=0.45\linewidth]{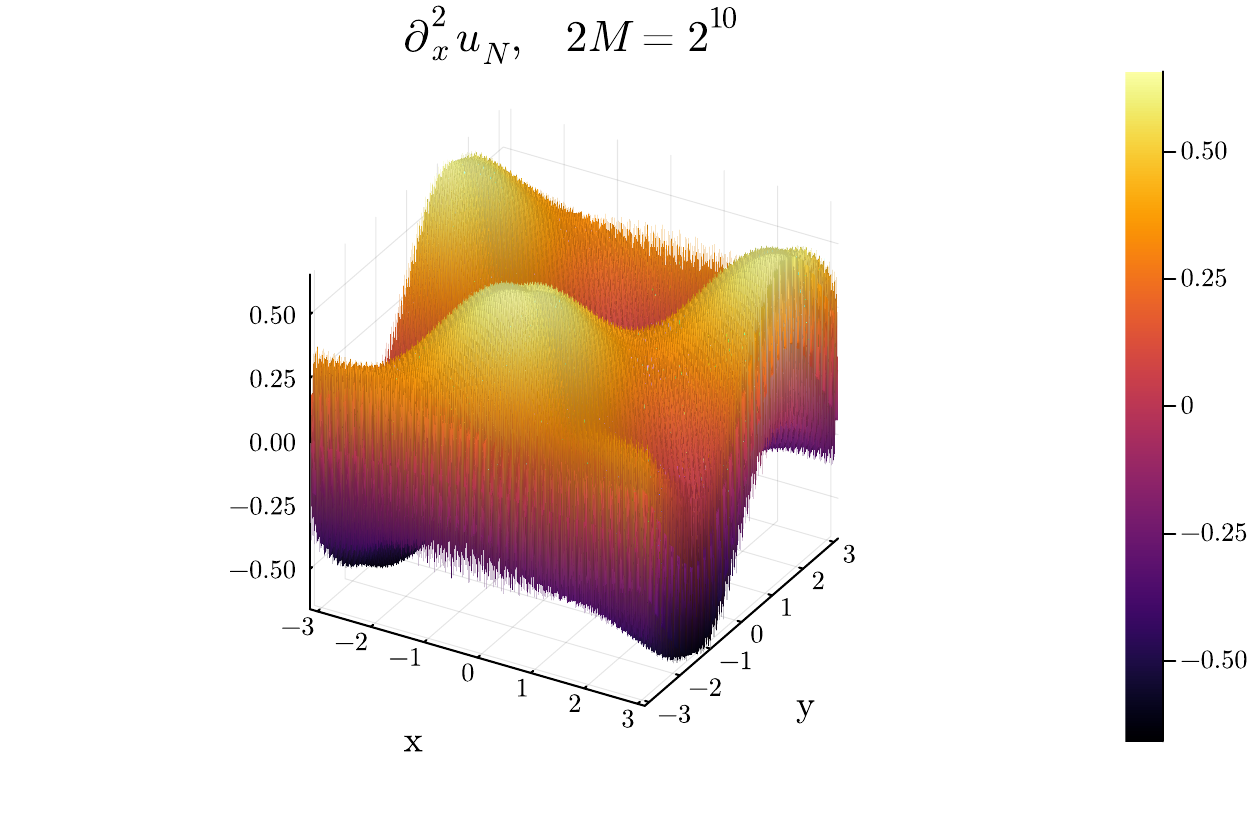}}\\
	\caption{Plots of the second derivative \(\partial_x^2 u_N\) of the numerical solution to~\eqref{eq.SV} for \(d=2\) at time \(T=0.1\), computed with either the sharp or smooth low-pass filter. The initial data is~\eqref{eq.init2D} with \(s=2, u_{\l} =- v_{\l} = 0.5, u_{\h} = -v_{\h} = 1\), negative minimial depth \(h_0=-0.1\) and \(N = \lfloor 2 M/3\rfloor\) for \(2M= 2^9\) or \(2M=2^{10}\).}
	\label{fig.2D_zerodepth}
\end{figure}

For the Hamiltonian Saint-Venant system~\eqref{eq.SV-2}, the numerical results are in line with the analysis. Whenever the hyperbolicity condition \(1 + \eta^{0} - (u^{0})^2 - (v^{0})^2>0\) is satisfied, the numerical scheme converges with the expected rate for both the smooth and the sharp low-pass filter. This is illustrated in Figure~\ref{fig.2DHam_conv} for initial data~\eqref{eq.init2D} with \(h_0, u_{\l} = - v_{\l} = 0.5, u_{\h} = -v_{\h} = 1\) and \(s=2\).

We observe instabilities in the numerical approximation of system~\eqref{eq.SV-2} when using the sharp low-pass filter  for initial data violating \(1 + \eta^{0} - (u^{0})^2 - (v^{0})^2>0\) but not  \(1 + \eta^{0} >0\). This is shown in Figure~\ref{fig.2D_strict_hyperbolicity}. There, we take as initial data~\eqref{eq.init2D} with \(h_0=0.5, u_{\l}=- v_{\l}=2, u_{\h} =  -v_{\h} = 1\) and \(s = 2\).
We relate these instabilities to the lack of well-posedness of the underlying system~\eqref{eq.SV-2} when the hyperbolicity condition \(1 + \eta^{0} - (u^{0})^2 - (v^{0})^2>0\) fails; see Lemma~\ref{lem.SV}. 
The numerical scheme with the smooth low-pass filter does not exhibit instabilities for the tested values of \(M\), but limited computational power prevents us from testing very large values of \(M\) in dimension two. 

\begin{figure}[tbp]
	\centering
	\subfloat[Solution computed with the sharp low-pass filter and \(2M = 2^{9}\).]{\label{fig:2Dsh9_sharp}\includegraphics[width=0.45\linewidth]{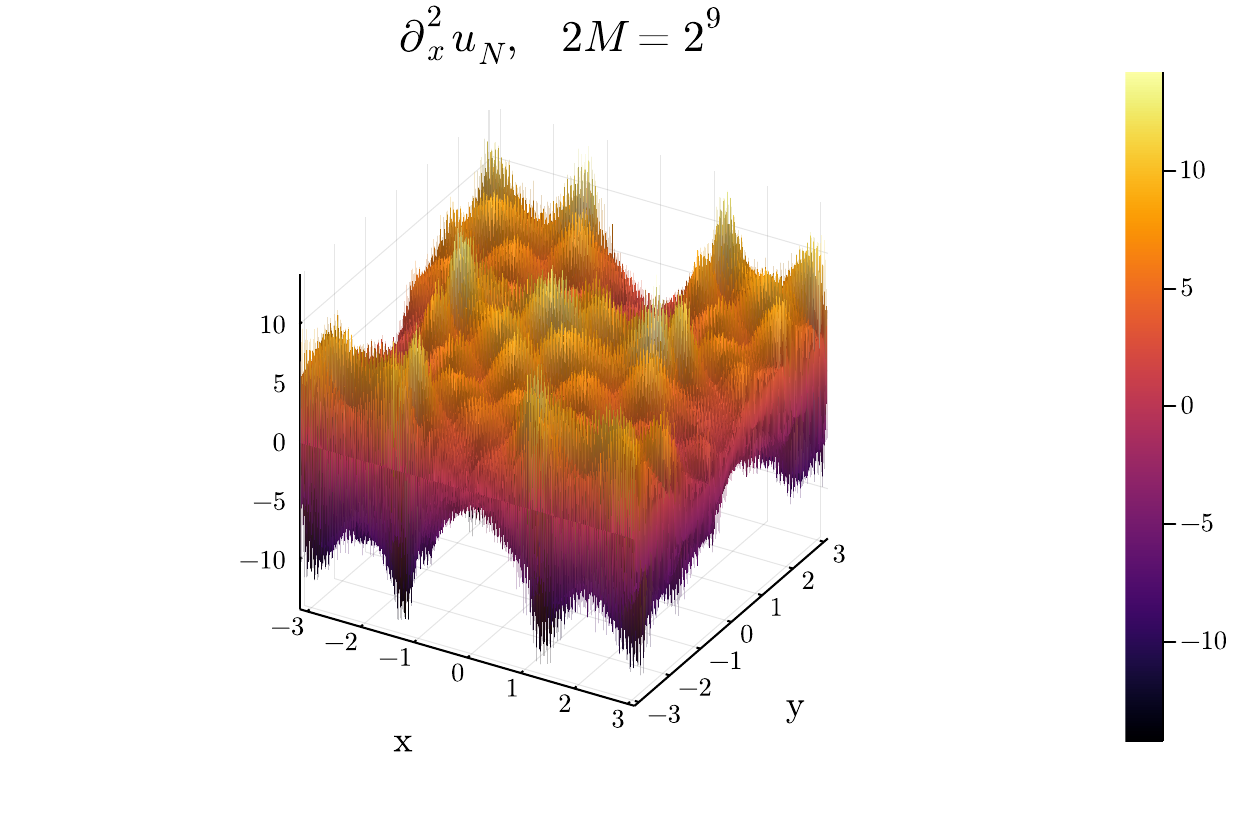}}\qquad
	\subfloat[Solution computed with the smooth low-pass filter and \(2M = 2^{9}\).]{\label{fig:2Dsh9_smooth}\includegraphics[width=0.45\linewidth]{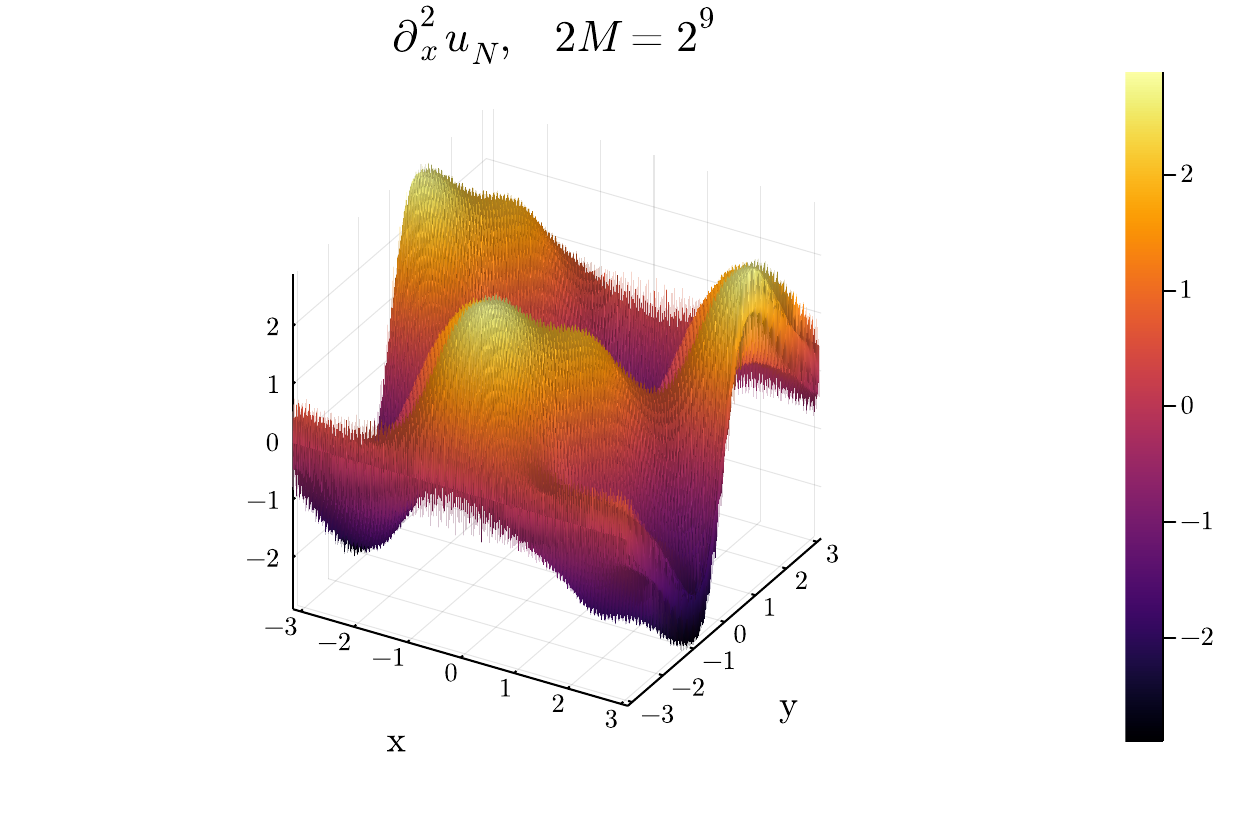}}\\
	\subfloat[Solution computed with the sharp low-pass filter and \(2M = 2^{10}\).]{\label{fig:2Dsh10_sharp}\includegraphics[width=0.45\linewidth]{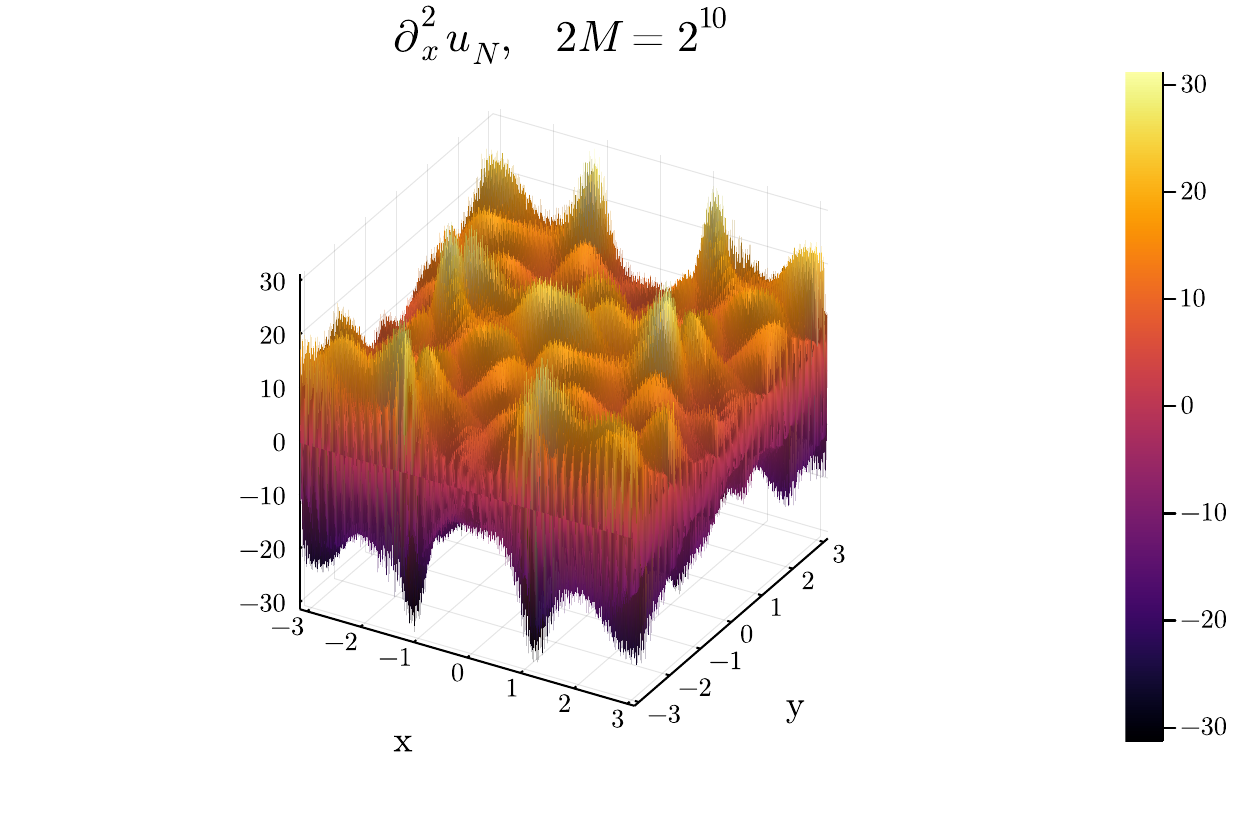}}\qquad
	\subfloat[Solution computed with the smooth low-pass filter and \(2M = 2^{10}\).]{\label{fig:2Dsh10_smooth}\includegraphics[width=0.45\linewidth]{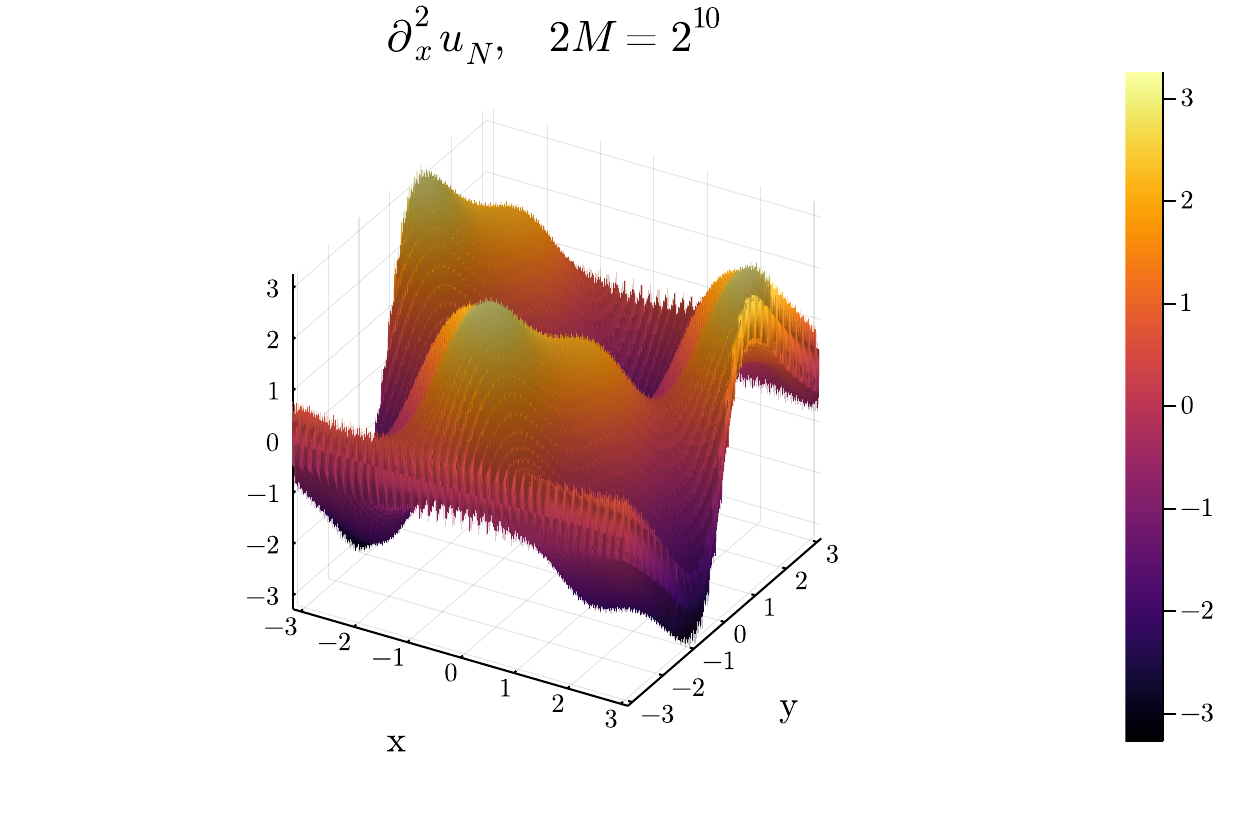}}\\
	\caption{Plots of the second derivative \(\partial_x^2 u_N\) of the numerical solution to~\eqref{eq.SV-2} for \(d=2\) at time \(T=0.1\), computed with either the sharp or smooth low-pass filter. The initial data is~\eqref{eq.init2D} with \(s=2, u_{\l} = - v_{\l} = 2, u_{\h} = -v_{\h} = 1\), positive minimal depth \(h_0=0.5\) and \(N = \lfloor 2M/3\rfloor\) for \(2M= 2^9\) or \(2M=2^{10}\).}
	\label{fig.2D_strict_hyperbolicity}
\end{figure}

	\appendix
	\section{Technical tools}
	The following results are standard, and proofs in the Euclidean space ({\em e.g.}~\cite[Theorem 8.3.1]{Hormander97} for product estimates) straightforwardly adapt to the periodic setting.
	
	\begin{Proposition}[Continuous embedding]\label{prop.embedding}
		Let $s\in\RR$ , $s>d/2$ and \(f\in H^{s}((2\pi\TT)^d)\). Then \(f\in L^\infty((2\pi\TT)^d)\) and 
		\[\big|f \big|_{L^\infty} \leq C(s) \big|f \big|_{H^{s}}.\]
	\end{Proposition}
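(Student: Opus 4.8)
The plan is to exploit the Fourier-series characterization of both norms and reduce the statement to the convergence of a single numerical series. First I would expand $f$ as $f(\bx)=\sum_{\bk\in\ZZ^d}\widehat{f}_{\bk}\,e^{\i\bk\cdot\bx}$ and estimate pointwise
\[
\left|f(\bx)\right|\leq \sum_{\bk\in\ZZ^d}\left|\widehat{f}_{\bk}\right|,
\]
so that it suffices to control the $\ell^1$-norm of the Fourier coefficients by the $H^s$-norm, uniformly in $\bx$.

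Second, I would insert the weight $\left\langle\bk\right\rangle^{\pm s}$ and apply the Cauchy--Schwarz inequality in $\ell^2(\ZZ^d)$:
\[
\sum_{\bk\in\ZZ^d}\left|\widehat{f}_{\bk}\right|
=\sum_{\bk\in\ZZ^d}\left\langle\bk\right\rangle^{-s}\left\langle\bk\right\rangle^{s}\left|\widehat{f}_{\bk}\right|
\leq\Big(\sum_{\bk\in\ZZ^d}\left\langle\bk\right\rangle^{-2s}\Big)^{1/2}\Big(\sum_{\bk\in\ZZ^d}\left\langle\bk\right\rangle^{2s}\left|\widehat{f}_{\bk}\right|^2\Big)^{1/2}.
\]
By the very definition of the Sobolev norm recalled in Subsection~\ref{sec.def.not}, the second factor equals $\left|f\right|_{H^s}$, so the desired bound follows with $C(s)=\big(\sum_{\bk\in\ZZ^d}\left\langle\bk\right\rangle^{-2s}\big)^{1/2}$, provided this series converges.

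The one point requiring care — and the crux of the argument, albeit elementary — is therefore the finiteness of $\sum_{\bk\in\ZZ^d}\left\langle\bk\right\rangle^{-2s}$. Here I would either compare the sum with the integral $\int_{\RR^d}\left\langle\bm{\xi}\right\rangle^{-2s}\,\dd\bm{\xi}$, or group the lattice points into dyadic shells $\left|\bk\right|\sim 2^m$, each containing $\cO(2^{md})$ points and contributing $\cO(2^{m(d-2s)})$; summing the resulting geometric series shows convergence exactly when $2s>d$. This is precisely where the dimensional threshold $s>d/2$ of the hypothesis enters, and it is the only place the assumption is used.

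Finally, the bound $\sum_{\bk\in\ZZ^d}\left|\widehat{f}_{\bk}\right|<\infty$ guarantees that the Fourier series converges absolutely and uniformly, so its sum is continuous and hence bounded; identifying this sum with $f$ and taking the supremum over $\bx$ in the pointwise inequality yields $\left|f\right|_{L^\infty}\leq C(s)\left|f\right|_{H^s}$, which completes the proof. I expect no genuine obstacle beyond the (routine) convergence check, the periodic setting being in fact slightly more transparent than the Euclidean one since the Fourier variable is discrete.
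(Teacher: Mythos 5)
Your proof is correct and complete: the paper itself offers no proof of Proposition~\ref{prop.embedding}, merely noting in the appendix that such results are standard, and your argument (pointwise bound by the $\ell^1$-norm of Fourier coefficients, Cauchy--Schwarz with the weight $\left\langle\bk\right\rangle^{\pm s}$, and convergence of $\sum_{\bk\in\ZZ^d}\left\langle\bk\right\rangle^{-2s}$ precisely when $2s>d$) is exactly the standard proof being alluded to. The identification of the absolutely convergent Fourier series with $f$ and the role of the threshold $s>d/2$ are both handled correctly, so there is nothing to add.
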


	\begin{Proposition}[Interpolation inequality]\label{prop.interpolation_inequality}
		Let $s_1,s_2\in\RR$ and \(f\in H^{s_1}((2\pi\TT)^d)\cap H^{s_2}((2\pi\TT)^d)\). Then for any \(0\leq\theta\leq 1\), \(f\in H^{\theta s_1+(1-\theta)s_2}((2\pi\TT)^d)\) and 
		\[\big|f \big|_{H^{\theta s_1+(1-\theta)s_2}} \leq  \big|f \big|_{H^{s_1}}^\theta \big|f \big|_{H^{s_2}}^{1-\theta}.\]
	\end{Proposition}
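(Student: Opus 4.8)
The plan is to prove the interpolation inequality directly from the Fourier-side definition of the Sobolev norms, since the norms are explicitly diagonalized by the Fourier basis. Recall from the definitions in Subsection~\ref{sec.def.not} that for $f\in H^{\sigma}((2\pi\TT)^d)$ one has
\[
\big|f \big|_{H^{\sigma}}^2 = \sum_{\bk\in\ZZ^d} \left\langle \bk \right\rangle^{2\sigma} \big|\widehat{f}_{\bk}\big|^2,
\]
so each norm is a weighted $\ell^2$ norm of the Fourier coefficients with weight $\langle\bk\rangle^{2\sigma}$. The first step is to record the elementary pointwise identity for the weights: for every fixed $\bk$ and every $\theta\in[0,1]$,
\[
\left\langle \bk \right\rangle^{2(\theta s_1+(1-\theta)s_2)} = \left(\left\langle \bk \right\rangle^{2s_1}\right)^{\theta}\left(\left\langle \bk \right\rangle^{2s_2}\right)^{1-\theta}.
\]
This reduces the claim to a convexity/H\"older statement about the sequence of Fourier coefficients.

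Next I would apply H\"older's inequality to the sum defining $|f|_{H^{\theta s_1+(1-\theta)s_2}}^2$. Writing the summand as a product,
\[
\left\langle \bk \right\rangle^{2(\theta s_1+(1-\theta)s_2)} \big|\widehat{f}_{\bk}\big|^2 = \Big(\left\langle \bk \right\rangle^{2s_1} \big|\widehat{f}_{\bk}\big|^2\Big)^{\theta}\Big(\left\langle \bk \right\rangle^{2s_2} \big|\widehat{f}_{\bk}\big|^2\Big)^{1-\theta},
\]
where I have also split $|\widehat{f}_{\bk}|^2 = (|\widehat{f}_{\bk}|^2)^\theta (|\widehat{f}_{\bk}|^2)^{1-\theta}$. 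Applying H\"older's inequality over $\bk\in\ZZ^d$ with conjugate exponents $p=1/\theta$ and $p'=1/(1-\theta)$ (the degenerate cases $\theta=0$ and $\theta=1$ being trivial) gives
\[
\sum_{\bk} \left\langle \bk \right\rangle^{2(\theta s_1+(1-\theta)s_2)} \big|\widehat{f}_{\bk}\big|^2 \leq \Big(\sum_{\bk}\left\langle \bk \right\rangle^{2s_1}\big|\widehat{f}_{\bk}\big|^2\Big)^{\theta}\Big(\sum_{\bk}\left\langle \bk \right\rangle^{2s_2}\big|\widehat{f}_{\bk}\big|^2\Big)^{1-\theta} = \big|f \big|_{H^{s_1}}^{2\theta}\,\big|f \big|_{H^{s_2}}^{2(1-\theta)}.
\]
Taking square roots then yields the asserted estimate $|f|_{H^{\theta s_1+(1-\theta)s_2}} \leq |f|_{H^{s_1}}^{\theta}|f|_{H^{s_2}}^{1-\theta}$, and in particular shows the intermediate norm is finite so that $f\in H^{\theta s_1+(1-\theta)s_2}((2\pi\TT)^d)$.

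There is no real obstacle here: the only point requiring the slightest care is the book-keeping of the exponents and verifying that the two factors multiplied by H\"older are precisely $|f|_{H^{s_1}}^2$ and $|f|_{H^{s_2}}^2$ raised to the powers $\theta$ and $1-\theta$. One should state at the outset that the endpoint cases $\theta\in\{0,1\}$ hold trivially by definition, so that H\"older's inequality is applied only for $\theta\in(0,1)$ where both conjugate exponents are finite. For the vector-valued statement $H^{s_1}((2\pi\TT)^d)^n$ one simply sums the componentwise inequalities, but as stated the proposition is scalar and the argument above suffices verbatim.
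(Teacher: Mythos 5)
Your proof is correct and is precisely the standard argument: the paper offers no proof of this proposition itself (the appendix defers to standard references), and the canonical proof is exactly your Fourier-side H\"older inequality with conjugate exponents \(1/\theta\) and \(1/(1-\theta)\), which you carry out correctly, including the sharp constant \(1\) and the trivial endpoint cases \(\theta\in\{0,1\}\). Nothing is missing.
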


	\begin{Proposition}[Product estimates]\label{prop.product_estimates}
		Let \(s_0 >d/2, s\geq -s_0\) and \(f\in H^{s}((2\pi\TT)^{d})\bigcap H^{s_0}((2\pi\TT)^{d}), g\in H^{s}((2\pi\TT)^{d})\). Then \(fg \in H^{s}((2\pi\TT)^{d})\) and 
		\[\big|fg \big|_{H^{s}} \leq C(s_0,s) \Big(\big|f \big|_{H^{s_0}}\big|g \big|_{H^{s}}+\big|f \big|_{H^{s}}\big|g \big|_{H^{s_0}}\Big).\]
		If moreover $s\leq s_0$ then
		\[\big|fg \big|_{H^{s}} \leq C(s_0,s) \big|f \big|_{H^{s_0}}\big|g \big|_{H^{s}}.\]
	\end{Proposition}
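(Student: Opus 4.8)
The plan is to work entirely on the Fourier side, where the product becomes a convolution: $\widehat{(fg)}_{\bk}=\sum_{\bm m+\bm n=\bk}\widehat f_{\bm m}\,\widehat g_{\bm n}$, so that $|fg|_{H^s}^2=\sum_{\bk\in\ZZ^d}\langle\bk\rangle^{2s}\big|\sum_{\bm m+\bm n=\bk}\widehat f_{\bm m}\widehat g_{\bm n}\big|^2$. The sole role of the hypothesis $s_0>d/2$ is to furnish the embedding $|\widehat h|_{\ell^1(\ZZ^d)}\le\big(\sum_{\bk\in\ZZ^d}\langle\bk\rangle^{-2s_0}\big)^{1/2}|h|_{H^{s_0}}=C(s_0)|h|_{H^{s_0}}$, the series converging precisely because $2s_0>d$. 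I would isolate this as the one auxiliary fact feeding the rest of the argument.

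For the first estimate and $s\ge0$ I would use the elementary subadditivity $\langle\bk\rangle^s\le C(s)\big(\langle\bm m\rangle^s+\langle\bm n\rangle^s\big)$, valid because $\bk=\bm m+\bm n$. Substituting it splits the weighted convolution into $\big(\langle\cdot\rangle^s|\widehat f|\big)*|\widehat g|$ and $|\widehat f|*\big(\langle\cdot\rangle^s|\widehat g|\big)$; Young's inequality in the form $\ell^2*\ell^1\hookrightarrow\ell^2$ bounds these by $|f|_{H^s}\,|\widehat g|_{\ell^1}$ and $|\widehat f|_{\ell^1}\,|g|_{H^s}$ respectively, and the embedding above converts the $\ell^1$ factors into $|g|_{H^{s_0}}$ and $|f|_{H^{s_0}}$, giving exactly the claimed right-hand side.

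The substance is the asymmetric estimate for $-s_0\le s\le s_0$, where $g$ lives only in $H^s$, so that $\widehat g$ need not be summable and the symmetric splitting collapses. I would decompose the convolution by comparing the frequencies $\bm m$ (of $f$) and $\bm n$ (of $g$). On the region where $g$ carries the output frequency ($|\bm n|\gtrsim|\bk|$, hence $\langle\bk\rangle^s\lesssim\langle\bm n\rangle^s$ for $s\ge0$) the contribution is at most $|\widehat f|*\big(\langle\cdot\rangle^s|\widehat g|\big)$, bounded by $|\widehat f|_{\ell^1}|g|_{H^s}\lesssim|f|_{H^{s_0}}|g|_{H^s}$ for any $s$. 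On the complementary region $f$ carries the high frequency, and a crude Cauchy--Schwarz here only reaches $s<s_0-d/2$ (it forces a global bound on $\sum_{\bk}\langle\bk\rangle^{2(s-s_0)}$); to cover the full range up to $s=s_0$ I would instead run a dyadic Littlewood--Paley (Bony paraproduct) decomposition $fg=T_fg+T_gf+R(f,g)$ and exploit almost-orthogonality. The low--high paraproduct obeys $\|T_fg\|_{H^s}\lesssim\|f\|_{L^\infty}\|g\|_{H^s}\lesssim|f|_{H^{s_0}}|g|_{H^s}$, the high--low paraproduct obeys $\|T_gf\|_{H^s}\lesssim|f|_{H^{s_0}}|g|_{H^s}$ using $s\le s_0$, and the high--high remainder $R(f,g)$ is summable thanks to $s_0>d/2$. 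The remaining range $s<0$ of both estimates then follows by duality, writing $|fg|_{H^s}=\sup\{(fg,h)_{L^2}:|h|_{H^{-s}}\le1\}$, moving one factor across the pairing, and invoking the estimate already proved at the nonnegative exponent $-s\in(0,s_0]$; here the hypothesis $s\ge-s_0$ is exactly what makes $-s\le s_0$.

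I expect the genuine obstacle to be this high--low (and diagonal) interaction in the asymmetric estimate near the endpoint $s\approx s_0$: it is the unique place where the low-frequency factor cannot simply be placed in $\ell^1$, and where the structural hypotheses must be used in tandem, $s_0>d/2$ for summability of the frequency weights and $s\le s_0$ to keep the high--low paraproduct in range. Everything else --- the $\ell^1$ embedding, Young's inequality, and the duality passage to negative $s$ --- is routine once this region is handled. Finally, since the statement is classical and referenced to~\cite[Theorem~8.3.1]{Hormander97}, I would simply remark that the Euclidean proof transfers verbatim upon replacing Fourier integrals by Fourier series and $L^2(\RR^d)$ by $\ell^2(\ZZ^d)$, the dyadic decomposition being identical on the torus.
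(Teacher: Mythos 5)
Your proposal is correct, but there is nothing in the paper to compare it against line by line: the authors do not prove this proposition at all. It appears in the appendix under the blanket remark that these results are standard and that proofs in the Euclidean space (they cite H\"ormander, Theorem~8.3.1) ``straightforwardly adapt to the periodic setting.'' So your sketch supplies an argument the paper delegates entirely to the literature. Your route --- the \(\ell^1(\ZZ^d)\) embedding \(|\widehat h|_{\ell^1}\leq C(s_0)|h|_{H^{s_0}}\) plus Young's inequality for the tame estimate at \(s\geq 0\), a Bony paraproduct decomposition for the asymmetric estimate on \(0\leq s\leq s_0\), and duality for \(-s_0\leq s<0\) --- is a standard, complete strategy, and every step transfers verbatim to Fourier series; in particular the high--low bound \(\|T_gf\|_{H^s}\lesssim |f|_{H^{s_0}}|g|_{H^s}\) does hold on the whole range \(0\le s\le s_0\) (Bernstein on \(S_{j-1}g\) costs \(2^{j(d/2-s)_+}\), which is absorbed by \(2^{j(s-s_0)}\) since \(s_0>d/2\) and \(s\le s_0\)), and the high--high remainder needs exactly \(s+s_0>d/2\), which your restriction \(s\geq0\) guarantees. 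This differs in flavor from H\"ormander's own argument, which estimates the convolution \(\widehat f*\widehat g\) directly, splitting frequency space according to which factor carries the dominant frequency and applying Cauchy--Schwarz with weights; that proof is more elementary (no Littlewood--Paley machinery) but is essentially the continuous analogue of your low--high/high--low/high--high trichotomy, while your version is more modular. Two bookkeeping points if you write it up in full: (i) the Young-inequality step for the tame estimate uses \(\widehat g\in\ell^1\), i.e.\ \(g\in H^{s_0}\), which is automatic only when \(s\geq s_0\); for \(s<s_0\) the tame bound is either vacuous (\(|g|_{H^{s_0}}=\infty\)) or subsumed by the asymmetric estimate, and this case distinction should be made explicit; (ii) for \(s<0\) the product \(fg\) must first be \emph{defined} (by density of trigonometric polynomials together with your a priori bound), since \(g\) is then merely a distribution. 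Neither is a gap in the mathematics, only in the exposition.
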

	Assuming polynomial nonlinearities, the following proposition is a straightforward consequence of product estimates. Extending this result to general (smooth) functions $P$ requires an analysis that is outside of the scope of the present paper.
	\begin{Proposition}[Composition estimates]\label{prop.composition_estimates} 
		Let \(s_0 >d/2, s\geq -s_0\), \(f,g\in H^{s}((2\pi\TT)^{d})\bigcap H^{s_0}((2\pi\TT)^{d})\) and {$P\in\RR[X]$ a polynomial}. Then \(P(f),P(g) \in H^{s}((2\pi\TT)^{d})\) and 
		\begin{align*}\big|P(f) -P(g)\big|_{H^{s}} &\leq C(P,s_0,s,\big|f \big|_{H^{\max(s_0, s)}},\big|g \big|_{H^{\max(s_0, s)}}) \big|f -g\big|_{H^{s}},\\
		\big|P(f) -P(0)\big|_{H^{s}} &\leq C(P,s_0,s,\big|f \big|_{H^{s_0}}) \big|f \big|_{H^{s}}.
	\end{align*}
	\end{Proposition}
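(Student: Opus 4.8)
The plan is to reduce everything to monomials and then iterate the product estimates of Proposition~\ref{prop.product_estimates}. By linearity of $P\mapsto P(f)$ and the triangle inequality, writing $P(X)=\sum_{k=0}^m a_k X^k$, it suffices to establish both bounds for each monomial $P(X)=X^k$, $k\geq 1$ (the case $k=0$ being trivial since then $P(f)-P(g)=0$ and $P(f)-P(0)=0$), and then to recombine the contributions with constants depending on the coefficients $a_k$ and the degree $m$. Throughout I would use repeatedly that, since $s_0>d/2$, the space $H^{s_0}((2\pi\TT)^d)$ is an algebra: applying the second ($s\leq s_0$) product estimate with $s=s_0$ gives $\big|uv\big|_{H^{s_0}}\leq C(s_0)\big|u\big|_{H^{s_0}}\big|v\big|_{H^{s_0}}$, and likewise $H^{s}$ is an algebra whenever $s>d/2$.

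For the second estimate I would argue by induction on $k$. The base case $k=1$ is an identity. For the inductive step, I write $f^{k}=f\cdot f^{k-1}$ and apply the product estimate to get $\big|f^k\big|_{H^s}\leq C(s_0,s)\big(\big|f\big|_{H^{s_0}}\big|f^{k-1}\big|_{H^s}+\big|f\big|_{H^s}\big|f^{k-1}\big|_{H^{s_0}}\big)$. The algebra property bounds $\big|f^{k-1}\big|_{H^{s_0}}\leq C\big|f\big|_{H^{s_0}}^{k-1}$, while the induction hypothesis bounds $\big|f^{k-1}\big|_{H^s}\leq C(\big|f\big|_{H^{s_0}})\big|f\big|_{H^s}$; inserting both yields $\big|f^k\big|_{H^s}\leq C(\big|f\big|_{H^{s_0}})\big|f\big|_{H^s}$, which is the claimed bound for $P(X)=X^k$.

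For the first estimate I would exploit the algebraic factorization $f^k-g^k=(f-g)\sum_{j=0}^{k-1}f^j g^{k-1-j}=:(f-g)\,Q$. The key point is to arrange the product estimate so that the factor $f-g$ is measured only in $H^s$, and I would distinguish two cases according to the sign of $s-s_0$. When $s\leq s_0$, I use the second product estimate in the favourable orientation, $\big|(f-g)Q\big|_{H^s}\leq C(s_0,s)\big|Q\big|_{H^{s_0}}\big|f-g\big|_{H^s}$, and bound $\big|Q\big|_{H^{s_0}}$ by the algebra property in terms of $\big|f\big|_{H^{s_0}}$ and $\big|g\big|_{H^{s_0}}$; here $\max(s_0,s)=s_0$, matching the constant in the statement. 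When $s>s_0$, I use the first product estimate together with the embeddings $\big|f-g\big|_{H^{s_0}}\leq\big|f-g\big|_{H^s}$ and $\big|Q\big|_{H^{s_0}}\leq \big|Q\big|_{H^s}$ (valid since $s>s_0$), which gives $\big|(f-g)Q\big|_{H^s}\leq C(s_0,s)\big|f-g\big|_{H^s}\big|Q\big|_{H^s}$, and I close the estimate by bounding $\big|Q\big|_{H^s}$ via the algebra property of $H^s$ in terms of $\big|f\big|_{H^s}$ and $\big|g\big|_{H^s}$; here $\max(s_0,s)=s$. Summing over $k$ gives the first inequality, and in particular that $P(f),P(g)\in H^s$.

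The only genuinely delicate point — and the one deserving care — is this case distinction: a naive application of the product estimate would leave a term $\big|f-g\big|_{H^{s_0}}$ on the right-hand side, which is not controlled by $\big|f-g\big|_{H^s}$ when $s<s_0$. Choosing the orientation of the product estimate (and hence which factor carries the $H^{s_0}$ norm) according to whether $s\leq s_0$ or $s>s_0$ is exactly what removes this obstruction. Everything else is routine bookkeeping of constants, which depend non-decreasingly on $\big|f\big|_{H^{\max(s_0,s)}}$ and $\big|g\big|_{H^{\max(s_0,s)}}$ as required.
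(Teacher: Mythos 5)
Your proof is correct and takes essentially the same approach as the paper, which offers no written argument beyond the remark preceding the proposition that it ``is a straightforward consequence of product estimates'' (Proposition~\ref{prop.product_estimates}). Your reduction to monomials, the induction for the $P(f)-P(0)$ bound, and the factorization $f^k-g^k=(f-g)\sum_{j=0}^{k-1}f^jg^{k-1-j}$ with the case distinction $s\leq s_0$ versus $s>s_0$ (so that $f-g$ is only ever measured in $H^s$) supply exactly the details the paper leaves implicit.
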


	\begin{Proposition}[Commutator estimates with symbols of order s]\label{prop.commutator_s}
		Let \(s_0>d/2, s\geq 0\) and \(\Lambda^s  = (\Id - \Delta)^{s/2}\). Let \(f\in H^{s}((2\pi\TT)^{d})\bigcap H^{s_0+1}((2\pi\TT)^{d}), g\in H^{s-1}((2\pi\TT)^{d})\bigcap H^{s_0}((2\pi\TT)^{d})\). Then 
		\[\big|[\Lambda^s ,f]g \big|_{L^2} \leq C(s_0, s)(\big|f \big|_{H^s}\big|g \big|_{H^{s_0}} + \big|f \big|_{H^{s_0 +1}}\big|g \big|_{H^{s-1}}).\]
	\end{Proposition}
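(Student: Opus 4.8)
The plan is to compute the commutator on the Fourier side and reduce the resulting bilinear sum to the product estimates of Proposition~\ref{prop.product_estimates}. Writing $\Lambda^s$ as the Fourier multiplier with symbol $\langle\cdot\rangle^s$ and denoting by $\hat f_m$ the Fourier coefficients, the $k$-th Fourier coefficient of $[\Lambda^s,f]g$ is
\[\widehat{([\Lambda^s,f]g)}_k=\sum_{\ell\in\ZZ^d}\big(\langle k\rangle^s-\langle \ell\rangle^s\big)\,\hat f_{k-\ell}\,\hat g_\ell,\]
so that everything hinges on a pointwise bound for the symbol difference $\langle k\rangle^s-\langle\ell\rangle^s$ in terms of the frequency $m\coloneq k-\ell$ carried by $f$ and the frequency $\ell$ carried by $g$.

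The key step, and the main obstacle, is to establish for all $s\geq 0$ and all $k,\ell\in\ZZ^d$ the symbol estimate
\[\big|\langle k\rangle^s-\langle\ell\rangle^s\big|\leq C(s)\big(\langle k-\ell\rangle^s+\langle k-\ell\rangle\,\langle\ell\rangle^{s-1}\big).\]
I would prove this by splitting into two frequency regimes. When $|k-\ell|\geq\frac12|\ell|$ (the regime where $f$ carries the high frequency) one has $\langle k\rangle\lesssim\langle k-\ell\rangle$ and $\langle\ell\rangle\lesssim\langle k-\ell\rangle$, so the left-hand side is controlled by $\langle k-\ell\rangle^s$ directly via the triangle inequality. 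When $|k-\ell|<\frac12|\ell|$ I would write $\langle k\rangle^s-\langle\ell\rangle^s=s\int_0^1\langle\ell+t(k-\ell)\rangle^{s-2}\big(\ell+t(k-\ell)\big)\cdot(k-\ell)\,\dd t$ and note that $|\ell+t(k-\ell)|\geq\frac12|\ell|$ throughout the segment, so $\langle\ell+t(k-\ell)\rangle\approx\langle\ell\rangle$ and the integral is bounded by $C\langle k-\ell\rangle\langle\ell\rangle^{s-1}$. The care needed here is to handle $0\leq s<1$ (where $\langle\cdot\rangle^{s-1}$ is decreasing, so a lower bound on the argument is required) and $s\geq 1$ on the same footing; the uniform comparison $\langle\ell+t(k-\ell)\rangle\approx\langle\ell\rangle$ makes both cases work.

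With the symbol bound in hand, I would pass to auxiliary functions $F,G$ defined by the nonnegative Fourier coefficients $\hat F_m\coloneq|\hat f_m|$ and $\hat G_\ell\coloneq|\hat g_\ell|$, which satisfy $|F|_{H^\sigma}=|f|_{H^\sigma}$ and $|G|_{H^\sigma}=|g|_{H^\sigma}$ for every $\sigma\in\RR$. Taking absolute values in the Fourier representation above and inserting the symbol estimate, $|[\Lambda^s,f]g|_{L^2}$ is dominated, via Parseval, by the $L^2$ norms of $(\Lambda^s F)\,G$ and $(\Lambda F)\,(\Lambda^{s-1}G)$, since these are exactly the functions whose Fourier coefficients are the two convolution sums produced by the two terms of the symbol bound. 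Finally I would apply Proposition~\ref{prop.product_estimates} in the form $|uv|_{L^2}\leq C(s_0)\,|u|_{H^{s_0}}|v|_{L^2}$ (valid since $s_0>d/2\geq 0$, so $0\leq s_0$) to each factor: the first gives $|(\Lambda^s F)\,G|_{L^2}\leq C\,|G|_{H^{s_0}}\,|\Lambda^s F|_{L^2}=C\,|g|_{H^{s_0}}|f|_{H^s}$, and the second gives $|(\Lambda F)(\Lambda^{s-1}G)|_{L^2}\leq C\,|\Lambda F|_{H^{s_0}}\,|\Lambda^{s-1}G|_{L^2}=C\,|f|_{H^{s_0+1}}|g|_{H^{s-1}}$. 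Summing the two contributions yields the claimed estimate.
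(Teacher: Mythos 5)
Your proposal is correct, but there is nothing in the paper to compare it against line by line: the paper does not prove Proposition~\ref{prop.commutator_s} at all. It is stated in the appendix under the preamble that such results ``are standard, and proofs in the Euclidean space \ldots straightforwardly adapt to the periodic setting,'' with a pointer to H\"ormander's book for the companion product estimates. What you have written is therefore a genuine, self-contained substitute for that citation, and it is sound. Your key lemma --- the symbol bound $\big|\langle k\rangle^s-\langle\ell\rangle^s\big|\leq C(s)\big(\langle k-\ell\rangle^s+\langle k-\ell\rangle\,\langle\ell\rangle^{s-1}\big)$ --- is exactly the frequency-splitting device behind classical Kato--Ponce-type commutator estimates: in the regime $|k-\ell|\geq\tfrac12|\ell|$ all three frequencies are comparable to $\langle k-\ell\rangle$ and the crude triangle inequality suffices, while in the regime $|k-\ell|<\tfrac12|\ell|$ the fundamental-theorem-of-calculus representation together with the uniform equivalence $\langle\ell+t(k-\ell)\rangle\approx\langle\ell\rangle$ on the segment gives the gain of one derivative on $g$; your remark that this equivalence handles $0\leq s<1$ and $s\geq1$ on the same footing is precisely the point that needs care, and you address it correctly. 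The passage to the majorant functions $F,G$ with Fourier coefficients $|\hat f_m|,|\hat g_\ell|$ (legitimate in the periodic setting, where Sobolev norms see only the moduli of the coefficients, and $F$, $G$ remain real-valued since $f$, $g$ are), followed by Parseval and two applications of the paper's Proposition~\ref{prop.product_estimates} in the form $\norm{uv}_{L^2}\leq C(s_0)\norm{u}_{H^{s_0}}\norm{v}_{L^2}$, cleanly converts the two terms of the symbol bound into the two terms $\norm{f}_{H^s}\norm{g}_{H^{s_0}}$ and $\norm{f}_{H^{s_0+1}}\norm{g}_{H^{s-1}}$ of the claimed estimate, using each of the four hypotheses on $f$ and $g$ exactly once. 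What your route buys, compared with the paper's appeal to the Euclidean literature, is that it works natively on the torus (no adaptation argument needed) and relies on nothing beyond the product estimate the paper already assumes.
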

	The following result is shown on \(\RR^d\) in~\cite[Lemma 4.5]{DucheneMelinand24}. The proof straightforwardly adapts to the periodic setting.
	\begin{Proposition}[Commutator estimates with operators of order zero]\label{prop.commutator_0}
		Let \(s_0 > d/2, s\geq0\) and $G(D)$ be a Fourier multiplier with symbol \(G\) satisfying \(\big|G \big|_{L^{\infty}}, \big|\left|\,\cdot\, \right| \nabla G \big|_{L^{\infty}} \leq C_G\). Let \(f\in H^{s_0+1}((2\pi\TT)^d)\bigcap H^{s}((2\pi\TT)^d)\), \(g \in H^{s-1}((2\pi\TT)^d)\). Then
	   \[\big|[G(D), f] g\big|_{H^s} \leq C(s_0, s)\, C_G\, \big|f \big|_{H^{\max(s_0+1,s)}}\big|g \big|_{H^{s-1}}.\]
   \end{Proposition}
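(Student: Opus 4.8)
The plan is to work entirely on the Fourier side and reduce the estimate to the product estimate of Proposition~\ref{prop.product_estimates}, the gain of one derivative being encoded in a pointwise bound on the symbol difference. Writing Fourier coefficients with $\bk,\bm{\ell}\in\ZZ^d$, one has
\[\widehat{[G(D),f]g}_{\bk}=\sum_{\bm{\ell}\in\ZZ^d}\big(G(\bk)-G(\bm{\ell})\big)\,\widehat{f}_{\bk-\bm{\ell}}\,\widehat{g}_{\bm{\ell}},\]
so everything hinges on controlling the multiplier $G(\bk)-G(\bm{\ell})$.

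First I would establish the key symbol estimate
\[\big|G(\bk)-G(\bm{\ell})\big|\le C(d)\,C_G\,\frac{\langle\bk-\bm{\ell}\rangle}{\max(\langle\bk\rangle,\langle\bm{\ell}\rangle)}.\]
Assuming without loss of generality $\langle\bk\rangle\ge\langle\bm{\ell}\rangle$, I would split into two cases. When $\langle\bk-\bm{\ell}\rangle\ge\tfrac12\langle\bk\rangle$, the right-hand side is bounded below by a fixed multiple of $C_G$, so the bound follows from $\big|G\big|_{L^\infty}\le C_G$. When $\langle\bk-\bm{\ell}\rangle<\tfrac12\langle\bk\rangle$, the whole segment $[\bm{\ell},\bk]$ stays at distance $\gtrsim|\bk|$ from the origin, so $G\in\cC^1$ there and the mean value theorem together with $\big|\left|\,\cdot\,\right|\nabla G\big|_{L^\infty}\le C_G$ gives $|G(\bk)-G(\bm{\ell})|\le|\bk-\bm{\ell}|\sup_{[\bm{\ell},\bk]}|\nabla G|\lesssim C_G\,|\bk-\bm{\ell}|/|\bk|$. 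This is the only place the hypothesis on $\nabla G$ enters, and the need to avoid the origin is exactly why the gain degrades to the trivial bound $2C_G$ when the two frequencies are far apart.

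Next I would turn this into a product estimate. Introduce the auxiliary functions $\tilde f,\tilde g$ with the nonnegative Fourier coefficients $\langle\bm\rangle|\widehat f_\bm|$ and $|\widehat g_\bm|$; then $|\tilde f|_{H^\sigma}=|f|_{H^{\sigma+1}}$ and $|\tilde g|_{H^\sigma}=|g|_{H^\sigma}$. Using $\max(\langle\bk\rangle,\langle\bm{\ell}\rangle)^{-1}\le\langle\bk\rangle^{-1}$ in the symbol bound and the positivity of the coefficients of $\tilde f\tilde g$, one gets $\big|\widehat{[G(D),f]g}_{\bk}\big|\le C\,C_G\,\langle\bk\rangle^{-1}\widehat{\tilde f\tilde g}_{\bk}$, hence $|[G(D),f]g|_{H^s}\le C\,C_G\,|\tilde f\tilde g|_{H^{s-1}}$, and Proposition~\ref{prop.product_estimates} yields the bound $C\,C_G\big(|f|_{H^{s_0+1}}|g|_{H^{s-1}}+|f|_{H^{s}}|g|_{H^{s_0}}\big)$. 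For $s\ge s_0+1$ one has $|g|_{H^{s_0}}\le|g|_{H^{s-1}}$ and this is the claimed estimate. Symmetrically, using $\max(\langle\bk\rangle,\langle\bm{\ell}\rangle)^{-1}\le\langle\bm{\ell}\rangle^{-1}$ and $\tilde h\coloneq\Lambda^{-1}\tilde g$ gives $|[G(D),f]g|_{H^s}\le C\,C_G\,|\tilde f\tilde h|_{H^{s}}\le C\,C_G\big(|f|_{H^{s_0+1}}|g|_{H^{s-1}}+|f|_{H^{s+1}}|g|_{H^{s_0-1}}\big)$, which yields the claim for $s\le s_0$.

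Finally, the remaining window $s_0<s<s_0+1$ I would fill by interpolation: the two routes above show that, for fixed $f$, the linear operator $[G(D),f]$ is bounded $H^{s_0-1}\to H^{s_0}$ and $H^{s_0}\to H^{s_0+1}$, both with operator norm $\le C\,C_G\,|f|_{H^{s_0+1}}$; complex interpolation (with $\theta=s-s_0$) then yields boundedness $H^{s-1}\to H^{s}$ with the same norm, and $|f|_{H^{s_0+1}}=|f|_{H^{\max(s_0+1,s)}}$ throughout this range. The main obstacle is really the symbol estimate: because $\nabla G$ is controlled only away from the origin, a single global choice of $\langle\bk\rangle^{-1}$ or $\langle\bm{\ell}\rangle^{-1}$ cannot capture the gain in all frequency regimes at once, which is precisely why the argument must be organized into the two complementary routes together with interpolation rather than a one-shot reduction.
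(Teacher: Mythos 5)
Your strategy is, at its core, the same as the one the paper relies on: Proposition~\ref{prop.commutator_0} is not proved in the paper itself but quoted from \cite[Lemma 4.5]{DucheneMelinand24}, whose proof also works on the Fourier side and hinges on exactly your symbol estimate $|G(\bk)-G(\bm{\ell})|\lesssim C_G\,\langle\bk-\bm{\ell}\rangle/\max(\langle\bk\rangle,\langle\bm{\ell}\rangle)$, obtained by the same dichotomy: the $L^\infty$ bound when the frequencies are comparably separated, and the mean value theorem on a segment avoiding the origin otherwise. (Your case-2 geometry is correct: $\langle\bk-\bm{\ell}\rangle<\tfrac12\langle\bk\rangle$ forces $|\bk|\geq\sqrt3$ and $|x|\gtrsim|\bk|$ along the whole segment, and the hypothesis $\big|\left|\,\cdot\,\right|\nabla G\big|_{L^\infty}\leq C_G$ makes $G$ Lipschitz there, so the MVT applies.) Where you genuinely diverge is in the endgame: the cited proof splits the convolution sum into the frequency regions $\langle\bk-\bm{\ell}\rangle\leq\tfrac12\langle\bm{\ell}\rangle$ and its complement and uses the appropriate factor in each region, which covers all $s\geq0$ in one pass; you instead make a single global choice of denominator ($\langle\bk\rangle^{-1}$ or $\langle\bm{\ell}\rangle^{-1}$) and patch the window $s_0<s<s_0+1$ by complex interpolation of the fixed operator $[G(D),f]$ between $H^{s_0-1}\to H^{s_0}$ and $H^{s_0}\to H^{s_0+1}$. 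That interpolation step is legitimate ($H^\sigma((2\pi\TT)^d)$ is a weighted-$\ell^2$ scale, for which complex interpolation gives exactly the intermediate spaces, and both endpoints hold with norm $\lesssim C_G\big|f\big|_{H^{s_0+1}}$), at the price of being less elementary than the regionwise splitting.

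There is, however, one step that fails as written. In your second route you conclude the claim for $s\leq s_0$ from the bound $C\,C_G\big(|f|_{H^{s_0+1}}|g|_{H^{s-1}}+|f|_{H^{s+1}}|g|_{H^{s_0-1}}\big)$. For $s<s_0$ one has $s-1<s_0-1$, so $|g|_{H^{s_0-1}}$ dominates $|g|_{H^{s-1}}$ rather than the other way around; indeed, under the stated hypotheses $g$ is only assumed to lie in $H^{s-1}$, so $|g|_{H^{s_0-1}}$ may be infinite, and the displayed right-hand side does not reduce to the asserted estimate (your deduction is valid only at the endpoint $s=s_0$, where the second term coincides with the first). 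The repair is immediate and stays inside your toolbox: for $0\leq s\leq s_0$, apply the tame case of Proposition~\ref{prop.product_estimates} (``if moreover $s\leq s_0$ then $|fg|_{H^s}\leq C|f|_{H^{s_0}}|g|_{H^s}$'') to get directly $|\tilde f\tilde h|_{H^s}\leq C|\tilde f|_{H^{s_0}}|\tilde h|_{H^s}=C|f|_{H^{s_0+1}}|g|_{H^{s-1}}$, with no second term at all. With this one-line fix, route 2 covers $0\leq s\leq s_0$, route 1 covers $s\geq s_0+1$ (where your verification is correct, including $s-1\geq -s_0$ for the product estimate), the interpolation endpoints are unaffected, and the proof is complete.
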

   \begin{Remark}
   	Notice that for smooth symbols considered in this work, namely  $S_N(\cdot)=S(\cdot/N)$ where $S$ is even with
   	\[\begin{cases}
   		S(\bk)=1&\text{if $\max_{j=1,\ldots, d}|k_j|\leq 1/2$,}\\
   		S(\bk)=0&\text{if $\min_{j=1, \ldots, d}|k_j|\geq 1$,}\\
   		S(\bk)\in{[}0,1{]}&\text{otherwise,}
   	\end{cases}
   	\]
   	and $S^{1/2}$ is Lipschitz-continuous, $S_N^{1/2}$ satisfies the hypotheses of Proposition~\ref{prop.commutator_0} uniformly with respect to $N$. Indeed, by Rademacher's theorem we have that $S_N^{1/2}$ is differentiable almost everywhere and its derivative is essentially bounded, and since $S$ has compact support, $\left|\,\cdot\, \right| \nabla S_N^{1/2} \in L^{\infty}$. Moreover, we have
   	 \[\big|S_N^{1/2} \big|_{L^{\infty}}+ \big|\left|\,\cdot\, \right| \nabla S_N^{1/2} \big|_{L^{\infty}} = \big|S^{1/2} \big|_{L^{\infty}}+ \big|\left|\,\cdot\, \right| \nabla S^{1/2} \big|_{L^{\infty}}.\]
   \end{Remark}

   \section*{Acknowledgements} VD thanks Centre Henri Lebesque ANR-11-LABX-0020-0 for fostering an attractive mathematical environement. JUM acknowledges the support of the project IMod (Grant No. 325114) from the Research Council of Norway. 
   \medskip

	\bibliographystyle{abbrv}

\end{document}